\documentclass{amsart}
\usepackage[T1]{fontenc}
\usepackage[utf8]{inputenc}
\usepackage{lmodern,microtype}
\usepackage{amsmath,amssymb,mathtools,mathrsfs,enumitem}
\usepackage{xcolor}
\usepackage[colorlinks=true,linkcolor=blue,citecolor=blue,urlcolor=blue]{hyperref}
\usepackage{tikz}
\usetikzlibrary{arrows.meta}
\newtheorem{theorem}{Theorem}[section]
\newtheorem{lemma}[theorem]{Lemma}
\newtheorem{proposition}[theorem]{Proposition}
\newtheorem{corollary}[theorem]{Corollary}
\newtheorem{maintheorem}{Theorem}

\newtheorem*{theorem*}{Theorem}
\theoremstyle{remark}
\newtheorem{remark}[theorem]{Remark}
\numberwithin{equation}{section}

\newcommand{\Prob}{\operatorname{Prob}}
\newcommand{\dom}{\operatorname{dom}}
\newcommand{\ran}{\operatorname{ran}}
\newcommand{\id}{\operatorname{id}}

\newcommand{\Inf}{\operatorname{Inf}}
\newcommand{\Aff}{\operatorname{Aff}}
\newcommand{\Homeo}{\operatorname{Homeo}}
\newcommand{\Clop}{\operatorname{Clop}}
\newcommand{\cM}{\mathcal M}
\newcommand{\cG}{\mathcal G}
\newcommand{\one}{\mathbf 1}
\newcommand{\acts}{\curvearrowright}
\newcommand{\precsimG}{\precsim_{\Gamma}}
\newcommand{\Talpha}{T(\alpha)}
\newcommand{\Halpha}{H(\alpha)}
\newcommand{\diam}{\operatorname{diam}}

\title[Comparison and almost finiteness]{
Comparison and Almost Finiteness for Actions of Amenable Groups}
\author[E. Glasner]{Eli Glasner}
\address[E. Glasner]{Department of Mathematics, Tel Aviv University, Tel Aviv, Israel}

\email{glasner@math.tau.ac.il}
\author[C. Liu]{Chunlin Liu\textsuperscript{*}}
\address[C. Liu]{School of Mathematical Sciences, Dalian University of Technology, Dalian, 116024, P.R. China and Institute of Mathematics, Polish Academy of Sciences, ul. Śniadeckich 8, 00-656 Warszawa, Poland}

\email{chunlinliu@mail.ustc.edu.cn; chunlin.liu.math@gmail.com}
\thanks{\textsuperscript{*}Corresponding author: Chunlin Liu,
\href{mailto:chunlinliu@mail.ustc.edu.cn}
{chunlinliu@mail.ustc.edu.cn}.}
\date{}
\keywords{amenable group action, dynamical comparison, clopen type semigroup,
almost finiteness, coinvariant group, topological full group}
\subjclass[2020]{Primary 37B05; Secondary 22F05, 46L55}

\begin{document}
\begin{abstract}
We prove that every  action of a countably infinite
discrete amenable group on a nonempty compact Hausdorff zero-dimensional space  has dynamical comparison, without assuming
minimality, freeness, or
metrizability.  More precisely, strict inequalities under all invariant
probability measures imply comparison in the clopen type semigroup with an order-unit remainder. For minimal Cantor actions, the clopen type semigroup is cancellative and almost unperforated, and its
canonical map onto the positive cone of the coinvariant group is an
isomorphism of ordered monoids.  These results answer Melleray's
cancellation question and the amenable case of his comparison
question.

For minimal Cantor actions, we also prove goodness of the associated affine evaluation map and obtain a minimal homeomorphism of the same Cantor space with the same invariant probability measures.
 We identify the precise obstruction to
recovering clopen equidecomposability from affine evaluation alone.
This obstruction is given by the infinitesimal subgroup: for every
nonempty proper clopen set $A$, it parametrizes the
topological-full-group orbits of clopen sets having the same affine
evaluation as $A$.

Combining comparison with the theorems of Kerr and Szab\'o,
we prove that every free action of a countably infinite discrete
amenable group on a nonempty compact metrizable space with the
topological small boundary property is almost finite.
This settles Naryshkin’s almost-finiteness conjecture for finite-dimensional compact metrizable spaces.
If the action is also minimal, its reduced crossed product is
$\mathcal Z$-stable, has nuclear dimension at most one, and satisfies
all the regularity conditions in the Toms--Winter conjecture.

\end{abstract}
\maketitle

\tableofcontents

\section{Introduction}\label{sec:introduction}
For minimal homeomorphisms of a Cantor space, clopen partitions provide a fundamental link between orbit structure, invariant measures, and ordered groups. This connection underlies both the Bratteli--Vershik theory of Herman--Putnam--Skau \cite{HermanPutnamSkau} and the orbit-equivalence theory of Giordano--Putnam--Skau \cite{GiordanoPutnamSkau}. 
The following related 
comparison
principle was established by Glasner and Weiss \cite[Lemma 2.5]{GlasnerWeiss}.
\begin{theorem*}[Glasner--Weiss]
Let $(X,T)$ be a Cantor minimal system, and let $A,B\subseteq X$
be clopen sets satisfying
 $\mu(A)<\mu(B)$
 for every $T$-invariant Borel probability measure $\mu.$
 Then there exists $g\in[[T]]$ such that
\[
   gA\subseteq B.
\]
Here $[[T]]$ is the topological full group of $T$, whose elements
are homeomorphisms of $X$ which agree with powers of $T$ on the
parts of a finite clopen partition.
\end{theorem*}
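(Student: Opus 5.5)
The plan is to use Kakutani--Rokhlin towers. Throughout, $\mathcal M_T$ denotes the set of $T$-invariant Borel probability measures.

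\textbf{Step 1: a uniform gap.} The map $\mu\mapsto\mu(B)-\mu(A)$ is continuous on the weak$^*$-compact set $\mathcal M_T$, because $\one_A$ and $\one_B$ are continuous. It is strictly positive there, so there is $\delta>0$ with $\mu(B)-\mu(A)>\delta$ for every $\mu\in\mathcal M_T$.

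\textbf{Step 2: uniform ergodic averages.} A standard compactness argument, by contradiction using weak$^*$ limits of empirical measures, gives $N$ such that for all $n\ge N$ and all $x\in X$,
\[
\frac1n\sum_{i=0}^{n-1}\bigl(\one_B-\one_A\bigr)(T^ix)>\delta/2 .
\]

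\textbf{Step 3: a tower partition.} Choose a nonempty clopen set $C$ small enough that every return time to $C$ is at least $N$. This is possible by minimality: the first return times are bounded above, and a small enough clopen neighborhood of a point has large minimal return time, since $T$ has no periodic points on an infinite minimal Cantor space. Refine the return-time partition of $C$ into clopen pieces $C_{k,j}$ with return time $h_k\ge N$, fine enough that each level $T^iC_{k,j}$ ($0\le i<h_k$) lies entirely inside or entirely outside $A$, and likewise for $B$. This is done by intersecting $C_k$ with the clopen sets $T^{-i}A$ and $T^{-i}B$.

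\textbf{Step 4: matching within each column.} Fix a column $\{T^iC_{k,j}\}_{0\le i<h_k}$. Pick a point $x$ in its base. By Step 2 with $n=h_k\ge N$, the number of levels contained in $B$ exceeds the number contained in $A$. Choose an injection $\sigma$ from the $A$-levels to the $B$-levels, and on the level $T^iC_{k,j}\subseteq A$ define
\[
g=T^{\sigma(i)-i}.
\]
On every remaining level, define $g$ so that it bijectively maps the unused levels onto the unused images, again via powers of $T$; for instance, extend $\sigma$ to a permutation of $\{0,\dots,h_k-1\}$.

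\textbf{Step 5: $g$ lies in $[[T]]$.} The columns form a finite clopen partition of $X$. On each level, $g$ is a single power of $T$, and it maps levels bijectively onto levels. Hence $g$ is a homeomorphism belonging to $[[T]]$, and by construction $gA\subseteq B$.

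The main obstacle is Step 2, the uniform convergence of ergodic averages. Uniform convergence over all invariant measures fails in general for limits; what is needed here is only the strict lower bound, uniform in $x$. This follows because any sequence of empirical measures $\frac1{n_m}\sum_{i<n_m}\delta_{T^ix_m}$ with $n_m\to\infty$ has all of its weak$^*$ limit points in $\mathcal M_T$, where the integral of $\one_B-\one_A$ exceeds $\delta$. Making $C$ have large return times, as required in Step 3, is routine by minimality.
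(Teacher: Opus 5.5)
Your proof is correct. It is essentially the classical Kakutani--Rokhlin argument for this lemma, and each step holds:
\begin{itemize}
\item After the refinement, the index sets $I_A,I_B$ of levels in a column that lie in $A$, respectively $B$, satisfy $|I_A|=\sum_{i<h_k}\one_A(T^ix)$ and $|I_B|=\sum_{i<h_k}\one_B(T^ix)$ for any base point $x$.
\item Step~2 with $n=h_k\ge N$ then gives $|I_A|<|I_B|$.
\item Any injection $I_A\to I_B$ extends to a permutation of $\{0,\dots,h_k-1\}$, so $g$ permutes the pieces of a finite clopen partition by powers of $T$.
\end{itemize}

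The paper does not reprove this statement directly. It quotes it and recovers it as the case $\Gamma=\mathbb Z$ of the full-group assertion in Theorem~\ref{thm:main-comparison}\,(i), by a quite different route.

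In that setting there is no clopen tower partition of the whole space, since minimality, freeness and the $\mathbb Z$-structure are all dropped. F\o lner counting therefore only gives the amplified inequality $n[A]+[X]\le n[B]$ (Lemma~\ref{lem:nm-uniform-averages}). The multiplicity $n$ is removed in several steps:
\begin{itemize}
\item Run clopen augmenting-path matchings in the graph of a fixed amplified injection.
\item Bound the unmatched source by $(q+1)n[U]\le n[A]$ and the unmatched target by $[X]\le L[V]$.
\item Absorb $U$ into $V$ by iterating the three-to-two law $3a\le 2b\Rightarrow a\le b$. That law rests on the $C_3*C_2$ bisection matching and topological amenability.
\end{itemize}
The full-group element is then built as an involution exchanging $A\setminus B$ with a clopen subset of $B\setminus A$.

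Your argument replaces all of this with exact pointwise counting along columns. This is possible precisely because a minimal Cantor $\mathbb Z$-system has clopen Kakutani--Rokhlin partitions with arbitrarily tall columns. It is much shorter and entirely elementary, but it does not extend beyond that setting. The paper's approach covers arbitrary countable amenable groups and nonminimal, nonfree actions, and it also yields $g^2=\id_X$ and $\Gamma(B\setminus gA)=X$.

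Your construction can easily be made an involution too. Since $|I_A|<|I_B|$ forces $|I_A\setminus I_B|<|I_B\setminus I_A|$, you can pair the levels in $A\setminus B$ injectively with levels in $B\setminus A$, swap each pair, and fix everything else.
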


The main purpose of this paper is to establish this comparison
principle for actions of arbitrary countably infinite discrete
amenable groups on compact zero-dimensional spaces. Neither
minimality nor freeness is required for dynamical comparison. The
additional structure available in the minimal Cantor case allows us
to identify clopen types with the positive cone of the coinvariant
group and to classify exact equidecompositions. We first isolate the
finite clopen comparison property underlying the full-group
conclusion of Glasner--Weiss.

More precisely, let $\alpha:\Gamma\acts X$ be a continuous action
of a countably infinite discrete group on a nonempty compact Hausdorff
zero-dimensional space, and
write $\cM_\Gamma(X)$ for its invariant regular Borel probability measures.
For clopen sets $A,B\subseteq X$, write $A\precsimG B$ if $A$
admits a finite clopen partition whose pieces can be moved by
elements of $\Gamma$ to pairwise disjoint subsets of $B$.
The action has \emph{dynamical comparison} if
\begin{equation}\label{eq:intro-comparison}
   \mu(A)<\mu(B)
   \quad\text{for every }\mu\in\cM_\Gamma(X)
   \quad\Longrightarrow\quad
   A\precsimG B
\end{equation}
for all nonempty clopen sets $A,B$.
This is the clopen formulation of the comparison property studied
in \cite{Kerr,Melleray}. The Glasner--Weiss theorem immediately implies dynamical comparison
for Cantor minimal $\mathbb Z$-systems. Indeed, if
$g\in[[T]]$ satisfies $g(A)\subseteq B$, then on a finite clopen
partition of $A$ the map $g$ agrees with powers of $T$, which gives
$A\precsim_T B$.

Beyond $\mathbb Z$-actions, positive results for dynamical comparison
have been obtained under additional assumptions on the acting group.
Downarowicz and Zhang proved it for zero-dimensional compact metrizable
actions of groups whose finitely generated subgroups all have
subexponential growth
\cite[Theorem~6.33 and Proposition~6.18]{DownarowiczZhang}.
Naryshkin proved comparison for minimal actions of finitely generated
groups of polynomial growth on arbitrary compact metrizable spaces
\cite[Theorem~A]{Naryshkin}.

Melleray asked whether every minimal Cantor action of a countable group
has dynamical comparison \cite[Question~6.1]{Melleray}. In this
generality the answer is negative: Boldrini and Prasad constructed
topologically free minimal Cantor actions of $\mathbb F_\infty$
without comparison, including examples admitting invariant probability
measures \cite{BoldriniPrasad}.
Our result establishes comparison for all actions of all
countably infinite discrete amenable groups on nonempty compact Hausdorff
zero-dimensional spaces. There is no growth assumption on the group,
and the action need not be minimal or free.

The comparison problem has a natural algebraic formulation through the
\emph{clopen type semigroup} $T(\alpha)$, which records finite clopen
equidecomposition classes in finite disjoint unions of labelled copies
of $X$. Addition is induced by disjoint union, and $a\leq b$ means that
a representative of $a$ is equidecomposable with a clopen subset of a
representative of $b$. For a clopen set $A\subseteq X$, we write $[A]$
for its type. Each invariant measure $\mu$ evaluates a type by summing
its values over the labelled copies. This evaluation is denoted by
$\mu(a)$. The precise construction is recalled in
Section~\ref{sec:preliminaries}.

A monoid is \emph{cancellative} if $a+c=b+c$ implies $a=b$, and
\emph{almost unperforated} if $(n+1)a\leq nb$ implies $a\leq b$
for every $n\geq1$. Melleray also asked whether $T(\alpha)$ is
cancellative for every minimal amenable Cantor action
\cite[Question~6.2]{Melleray}.  We give an affirmative answer to
this question.

In fact, cancellation is part of a stronger structural picture.
The type semigroup is canonically related to the group of coinvariants.
Let $H(\alpha)$ be the quotient of $C(X,\mathbb Z)$ by the subgroup
generated by $f-f\circ\gamma^{-1}$, with $f\in C(X,\mathbb Z)$ and
$\gamma\in\Gamma$. Write $[f]_H$ for the class of $f$, and
$H(\alpha)^+$ for the image of the nonnegative integer-valued
functions. There is a canonical map
\[
 \pi:T(\alpha)\to H(\alpha)^+.
\]
As in the case of a single homeomorphism, the topological full group
$[[\alpha]]$ consists of homeomorphisms which agree with elements of
$\Gamma$ on a finite clopen partition. A type $c\in T(\alpha)$ is an \emph{order unit} if every type is
bounded above by an integer multiple of $c$; equivalently,
$[X]\leq mc$ for some integer $m\geq1$. Precise definitions and
conventions for these objects are given in
Section~\ref{sec:preliminaries}.

\begin{maintheorem}\label{thm:main-comparison}
Let $\alpha:\Gamma\acts X$ be a continuous action of a countably
infinite discrete amenable group on a nonempty compact Hausdorff
zero-dimensional space.
\begin{enumerate}[label=\textup{(\roman*)}]
\item If $a,b\in T(\alpha)$ satisfy
$
   \mu(a)<\mu(b)
$ for every $\mu\in\cM_\Gamma(X),
$
then
\[
   b=a+c
\]
for some order unit $c\in T(\alpha)$.

Moreover, if clopen sets $A,B\subseteq X$ satisfy
$
   \mu(A)<\mu(B)
$ for every $\mu\in\cM_\Gamma(X),
$
then there exists $g\in[[\alpha]]$ such that
\[
   g^2=\id_X,\qquad
   g(A)\subseteq B,\qquad
   \Gamma\bigl(B\setminus g(A)\bigr)=X.
\]
In particular, $\alpha$ has dynamical comparison.
\end{enumerate}

If, in addition, $X$ is a Cantor space and $\alpha$ is minimal, then
\begin{enumerate}[label=\textup{(\roman*)},start=2]
\item $T(\alpha)$ is cancellative and almost unperforated;
\item the canonical cone $H(\alpha)^+$ is proper, and
      $\pi:T(\alpha)\to H(\alpha)^+$ is an isomorphism of
      ordered monoids;
\item if clopen sets $A,B\subseteq X$ satisfy $[A]=[B]$ in
      $T(\alpha)$, then there exists $g\in[[\alpha]]$ such that
      $g(A)=B$.
\end{enumerate}
\end{maintheorem}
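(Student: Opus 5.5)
The plan is to prove (i) first, by a tiling-and-matching argument in the clopen type semigroup, and then derive (ii)--(iv) from (i) in the minimal Cantor case.

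\textbf{Step 1: reduction to clopen sets.} Replace $a,b$ by clopen sets $A,B$ in a finite disjoint union $X\times\{1,\dots,k\}$, on which $\Gamma$ acts diagonally. This action is again a zero-dimensional amenable action, and its invariant measures are $\frac1k(\mu\otimes\text{counting})$. A compactness argument over the weak$^*$-compact convex set $\cM_\Gamma(X)$ gives $\varepsilon>0$ with $\mu(B)-\mu(A)>\varepsilon$ for all $\mu$. By a standard Krylov--Bogolyubov type contradiction argument, there are then a F\o lner set $F$ and an integer $N$ with
\[
|\{\gamma\in F: x\in\gamma^{-1}B\}|-|\{\gamma\in F:x\in\gamma^{-1}A\}|>\varepsilon|F|/2
\]
for every $x$. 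Metrizability is not needed here, because only finitely many clopen sets enter at each stage.

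\textbf{Step 2: local matching.} Tile $X$ clopenly and approximately by translates of finitely many F\o lner shapes. Without freeness, use clopen quasi-tilings built from the finite clopen partition generated by the finitely many relevant sets, working with return-time counts rather than free orbits. Inside each tile, Hall's marriage theorem matches the points of $A$ injectively to points of $B$, leaving a surplus of density $\geq\varepsilon/4$ in $B$. Since everything is determined by finitely many clopen conditions, the matching pieces are clopen and are moved by group elements. The uncovered remainder of $A$ has small upper density and is absorbed into the surplus using a second level of tiles.

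To obtain the involution $g$ with $g^2=\id$, use the matching map on $A$, its inverse on the image, and the identity elsewhere, after first making $A$ and its image disjoint by a preliminary splitting. The surplus $B\setminus g(A)$ meets every tile. A compactness argument then shows that finitely many translates of it cover $X$, so $c=[B\setminus g(A)]$ is an order unit. I expect this step to be the main obstacle: the clopen tilings must be made to work in the non-free, nonmetrizable setting. My first attempt would be to pass to a metrizable factor generated by countably many clopen sets, on which clopen tilings can be obtained from Downarowicz--Huczek--Zhang style arguments applied to finitely many shapes.

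\textbf{Step 3: the minimal Cantor case.} When $\alpha$ is minimal, every nonzero type is an order unit, and every nonempty clopen set has measure bounded below uniformly over $\cM_\Gamma(X)$.

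\emph{Almost unperforation.} Suppose $(n+1)a\leq nb$. Then $\mu(a)\leq\frac{n}{n+1}\mu(b)<\mu(b)$ whenever $b\neq0$, so (i) gives $a\leq b$.

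\emph{Cancellation.} Suppose $a+c=b+c$. Using minimality, split $c$ into pieces of arbitrarily small measure. One has $\mu(a)=\mu(b)$ for all $\mu$. Write $b=b'+d$ with $d$ a tiny nonzero clopen piece; (i) then gives $a+\text{(small)}$ comparisons. A refinement based on Riesz-type decomposition of types, obtained from the clopen structure of $T(\alpha)$, converts these comparisons into an exact equality. Alternatively, and preferably, prove (iii) first.

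\emph{Part (iii).} The map $\pi$ is surjective by construction, and it is additive and order preserving. For injectivity, suppose $[f]_H=[g]_H$. Then $f-g=\sum(h_i-h_i\circ\gamma_i^{-1})$, and each coboundary move becomes an equidecomposition after adding a common large type $e$. This yields $a+e=b+e$, and cancellation in the order-unit sense then follows from (i). Properness of $H(\alpha)^+$ follows from the existence of invariant measures.

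\emph{Part (iv).} This follows by applying the $g^2=\id$ construction to the complements. If $[A]=[B]$, then $A$ and $B$ are equidecomposable by clopen pieces. Since $X\setminus A$ and $X\setminus B$ have equal type by cancellation, the two equidecompositions glue into a single element of $[[\alpha]]$ mapping $A$ onto $B$.
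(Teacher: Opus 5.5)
Your Step~2 is where the whole difficulty lies, and as proposed it does not go through. For free zero-dimensional actions, exact clopen tilings of $X$ by $(K,\delta)$-invariant shapes are equivalent to almost finiteness (Theorem~\ref{thm:kerr}). The paper derives almost finiteness from (i) (Corollary~\ref{cor:cantor-almost-finite}), so it cannot serve as an input. Downarowicz--Huczek--Zhang tilings are tilings of the group $\Gamma$, not clopen tilings of the action. Passing to a metrizable factor does not change this, and without freeness a shape does not even embed injectively into an orbit.

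Approximate clopen quasi-tilings, where available, leave an uncovered part $R$ that is small only in measure. ``Absorbing $R$ into the surplus $S$'' is then the implication $\mu(R)<\mu(S)\Rightarrow R\precsim_\Gamma S$ itself. A second level of tiles leaves a second remainder, and a clopen construction must stop after finitely many steps. This is the point where earlier tiling-based proofs needed growth assumptions (subexponential growth in Downarowicz--Zhang). In type language: bounded augmenting-path arguments can make the unmatched source $r$ small and the unmatched target $t$ large, say $Nr\le t$. But $Nr\le t$ does not imply $r\le t$ in a general monoid, so you need an almost-unperforation law that holds without simplicity.

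That missing ingredient is Theorem~\ref{thm:main-three-two}, $3a\le 2b\Rightarrow a\le b$, valid with no minimality or freeness. The paper turns a witness into an order-three bisection (cycling the three copies of $a$) and an order-two bisection (swapping the two copies of $b$), giving a $C_3*C_2$-action. Where the cocycle into $\Gamma\times C_d$ has trivial pointwise kernel, the action is topologically amenable and Reiter functions give a clopen transversal. Elsewhere, nontrivial relations give finite Hall configurations, so clopen augmentation terminates by compactness. Then (i) follows in four steps:
\begin{enumerate}
\item F\o lner averaging gives $n[A]+u\le n[B]$ (as in your Step~1).
\item Eliminating augmenting paths of length $\le 2q-1$ in the resulting clopen injection gives $(q+1)nr\le na$.
\item The finitely many group labels give the uniform bound $u\le Lt$.
\item Iterating Theorem~\ref{thm:main-three-two} on $3^hr\le 2^ht$ gives $r\le t$.
\end{enumerate}

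Step~3 also contains a circularity. You defer cancellation to (iii), but in (iii) you reduce injectivity of $\pi$ to $a+e=b+e\Rightarrow a=b$ and assert that this ``follows from (i)''. It does not follow directly: $a+e=b+e$ forces $\mu(a)=\mu(b)$ for all $\mu$, while (i) only turns strict inequalities into inequalities. The sketched ``Riesz-type'' refinement is not an argument. The paper obtains cancellation from almost unperforation (equivalently weak comparability) via Melleray's Proposition~2.20(2), recorded as Theorem~\ref{thm:known-type-results}(ii), and you should invoke that. Once (i) and cancellation are available, the following parts of your Step~3 are fine:
\begin{itemize}
\item almost unperforation from (i), as in Corollary~\ref{cor:nm-direct-consequences}(i);
\item properness of $H(\alpha)^+$ from full-support invariant measures;
\item (iv), by gluing equidecompositions $A\to B$ and $X\setminus A\to X\setminus B$.
\end{itemize}
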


Item~\textup{(iv)} answers
\cite[Question~6.3]{Melleray} for minimal amenable Cantor actions.
The conclusion in \textup{(i)} also extends the Glasner--Weiss
comparison theorem to arbitrary zero-dimensional amenable actions.
Both the full-group realization and the order-unit remainder in
\textup{(i)} follow from dynamical comparison.

Minimality is nevertheless essential for the remaining structural
part of the theorem. In
Subsection~\ref{subsec:nonminimal-counterexample},  we
give a free nonminimal Cantor $\mathbb Z$-action for which the
cancellativity assertion in \textup{(ii)}, both conclusions in
\textup{(iii)}, and the exact full-group conclusion in \textup{(iv)}
all fail. The example does not determine whether the
almost-unperforation assertion in \textup{(ii)} continues to hold
without minimality.

\vspace{.3cm}

The main new ingredient is the following $n=2$ form of almost
unperforation.  It requires neither minimality, freeness, nor
metrizability. 
Together with a finite clopen matching argument, it yields the general
comparison statement in Theorem~\ref{thm:main-comparison}\,\textup{(i)};
in the minimal Cantor case, it also yields full almost unperforation
and cancellation.

\begin{maintheorem}\label{thm:main-three-two}
Let $\alpha:\Gamma\acts X$ be a continuous action of a countably
infinite discrete amenable group on a compact Hausdorff
zero-dimensional space.  Then
\[
   3a\leq2b\quad\Longrightarrow\quad a\leq b
   \qquad(a,b\in T(\alpha)).
\]
\end{maintheorem}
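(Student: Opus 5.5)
The plan is to translate the inequality $3a\leq 2b$ into a bipartite graph with uniform expansion $3/2$, and then to produce an exact clopen injection $A\to B$ by a finite Hall-type matching carried out on Følner pieces. The strict excess $3/2>1$ is the slack that will pay for the boundary error of those pieces.

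\emph{Step 1: reduction to a graph.} Choose clopen representatives $A\subseteq X\times\{1,\dots,k\}$ of $a$ and $B\subseteq X\times\{1,\dots,k\}$ of $b$. Let $\Gamma$ act on the first coordinate of $\tilde X=X\times\{1,\dots,k\}$, and allow the finite set of "label-switching" moves as well. This keeps us inside a compact zero-dimensional space with an amenable group acting, so we may work with clopen subsets of $\tilde X$. The relation $3a\leq 2b$ gives a finite set $S\subseteq\Gamma$, together with the labels, and a finite clopen partition realizing an injection of three copies of $A$ into two copies of $B$. Projecting to $B$, each $x\in A$ receives three images $y_1(x),y_2(x),y_3(x)$ in $B$, each of the form $sx$ with $s\in S$. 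Each $y\in B$ is hit at most twice in total. Let $G\subseteq A\times B$ be the bipartite graph with these edges; its edge sets are clopen and its degrees are bounded by $|S|$. A double count then gives the local Hall inequality with excess: for every finite $F\subseteq A$ contained in a single orbit,
\[
   |N_G(F)|\;\geq\;\tfrac32|F|.
\]
This step uses no freeness. Stabilizers only mean that the $y_i(x)$ are computed pointwise, and the counting is pointwise.

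\emph{Step 2: local matchings on Følner pieces.} The goal is a finite clopen partition $A=\bigsqcup_j A_j$ and elements $s_j\in S$ with $s_jA_j\subseteq B$ pairwise disjoint; this is exactly $a\leq b$. To build it I would use amenability to cover $\tilde X$ by finitely many clopen "towers". Each tower has a base $V$ and a shape $F\subseteq\Gamma$ that is $(S^{4},\delta)$-invariant. I would allow overlaps, in the spirit of Ornstein--Weiss quasitilings, and pass to disjointified shapes. Since we are not assuming freeness, the levels are indexed through the orbit map $\gamma\mapsto\gamma v$ rather than through $F$ itself. Fixing $v\in V$, a point of $A$ in the tile is \emph{interior} if all of its $S$-neighbours stay in the tile. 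Inside the tile I solve the finite matching problem of Step 3 below. The solution depends only on the finite pattern $\{(\gamma,\mathbf 1_A(\gamma v),\mathbf 1_B(\gamma v)):\gamma\in F S^{2}\}$, and that pattern is locally constant in $v$. The base $V$ therefore splits into finitely many clopen pieces, and on each piece the matching is given by fixed group elements. This yields the required finite clopen partition.

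\emph{Step 3: the exact finite matching (main obstacle).} Inside a tile, interior $A$-points satisfy Hall's condition with room to spare. For $F'\subseteq A_{\mathrm{int}}$ we have $|N(F')|\geq\frac32|F'|$, so the tile contains at least $\frac12|A_{\mathrm{int}}|$ "spare" $B$-points beyond what a matching of the interior needs. The difficulty is the boundary points of $A$. Naively, Hall's condition can fail for a set $S'$ consisting mostly of boundary points whose neighbours fall outside the tile. Also, if $A$ is sparse in a tile, the slack $\frac12|A_{\mathrm{int}}|$ may not dominate the boundary count $\delta|F|$. I would attack this in two moves. First, I would match the boundary $A$-points into \emph{neighbouring} tiles, through the edges of $G$ that cross the boundary. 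The $B$-targets for these edges would be reserved in advance by a clopen rule, with the reservation capacity certified by the excess $\frac32$ applied to sets $F'$ that straddle two tiles. Second, when $A$ is sparse relative to $|F|$, I would coarsen the tiling where $A$-density is low. To do this I would choose the Følner parameter $\delta$ adaptively and iterate over finitely many scales, so that in every tile either the boundary count is at most $\frac14$ of the local $A$-count, or the tile contains no boundary $A$-points at all. Hall's theorem in each finite tile then gives the local matching, and Step 2 makes it clopen. I expect making this boundary bookkeeping uniform, without minimality and without a lower bound on the $A$-density, to be the real work. If a direct tiling argument stalls, I would first try the alternating-path route instead. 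One eliminates clopenly all augmenting paths of length at most $L$; the excess $3/2$ forces the alternating tree from any unmatched vertex to grow like $(3/2)^{L/2}$. The leftover unmatched set is then confined to Følner tiles on which an exact finite repair is possible.
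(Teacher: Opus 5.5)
Your Step~1 is right: $G$ is exactly the orbit-incidence graph the paper uses. But Step~3 is more than a bookkeeping issue, and the remedies you propose cannot close it.

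Look at the components of $G$, viewed as a multigraph, that contain no cycle and in which every $B$-vertex is hit exactly twice. These are precisely the points where the $C_3*C_2$-action is free and never leaves the three copies. That action is generated by the order-three relabelling of the three copies of $A$ and the swap of the two copies of $B$. Such a component is an infinite $(3,2)$-biregular tree. After suppressing the degree-two $B$-vertices it is a $3$-regular tree on $A$-points, and any $\ell$ vertices of a $3$-regular tree have at least $\ell+2$ edges leaving them.

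Consequences of this tree structure:
\begin{enumerate}[label=\textup{(\arabic*)}]
\item In every finite tile and at every F\o lner scale, more than a third of the tile's $A$-points from such a component share a target with an $A$-point outside the tile. No tile contains a whole component. So wherever $A$ is carried by such components, no choice of scale gives your dichotomy.
\item Cross-tile reservation fails for the same reason. A connected piece of $\ell$ such $A$-points inside a tile has only $\ell-1$ internal targets, so it needs at least one of its $\ell+2\geq3$ cut targets. Contracting the pieces gives again an infinite tree with all degrees at least $3$. That is the original selection problem at a coarser scale.
\item These components carry no finite Hall configuration, since a finite set with as many internal targets as points would contain a cycle. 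So nothing bounds the length of the augmenting paths needed there, and the compactness argument that stops clopen augmentation elsewhere is unavailable. The exponential growth of alternating trees only shows that the unmatched set is small in $T(\alpha)$ (compare Lemma~\ref{lem:nm-packing-unmatched}).
\item Absorbing that small remainder into the unused part of $B$ is itself a comparison statement of the kind being proved. The paper uses this route to deduce Theorem~\ref{thm:main-comparison}\,(i) \emph{from} the three-to-two theorem, not the reverse.
\item Step~2 has a further gap. Clopen castles with F\o lner shapes exhausting $\widetilde X$ degenerate without freeness. Even for free actions, an exact such partition is equivalent to almost finiteness (Theorem~\ref{thm:kerr}), which the paper derives from comparison. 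Quasitilings leave a remainder that again requires comparison.
\end{enumerate}

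The missing idea is how to use amenability on exactly this part. The paper realizes the witness as an order-three bisection $p$ and an involution $q$ of the transformation groupoid of the amenable group $\Gamma\times C_d$. Then $C_3*C_2$ acts through a continuous cocycle $\kappa$ into $\Gamma\times C_d$, and the proof splits in two.

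Where $\kappa(w,x)=1$ for some $w\neq1$, the point is fixed by a nontrivial word. A cycle or an exit from $F$ then gives a finite Hall configuration (Lemma~\ref{lem:cycle-exit}). Bounded clopen augmentation plus compactness finishes (Lemma~\ref{lem:shortest-augmentations}, Proposition~\ref{prop:finite-cores}). This is where your augmenting-path fallback is the right tool.

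The tree components above lie in the closed invariant set $Z$ where $\kappa$ has trivial kernel. On $Z$, Renault--Williams makes the action of the nonamenable group $C_3*C_2$ topologically amenable. An approximately equivariant Reiter map $z\mapsto m_z\in\Prob(C_3*C_2)$, averaged over $C_3$, yields a continuous weight $f$ with
\[
f(z)+f(az)+f(a^2z)=1\qquad\text{and}\qquad f(z)+f(bz)>1-3\eta.
\]
Choosing in each three-point orbit a point with $f<\tfrac13+\eta$ gives the clopen transversal directly (Lemma~\ref{lem:reiter-transversal}).

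Your proposal lacks this global fair-share rule. No tile-wise Hall matching can replace it.
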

Although Theorem~\ref{thm:main-three-two} treats only the case
$n=2$, in the minimal setting it is enough to recover full almost
unperforation.  Indeed, minimality makes $T(\alpha)$ simple, so every
nonzero type is an order unit.  Writing $u=[X]$, for each nonzero
$c\in T(\alpha)$ one may choose $h$ so that $u\leq 2^h c$.
If $3^h b\leq u$, then
\[
   3^h b\leq u\leq 2^h c
\]
and repeated application of Theorem~\ref{thm:main-three-two} yields
$b\leq c$.  This gives weak comparability, which for minimal actions
implies almost unperforation and, together with the standard
type-semigroup results, dynamical comparison and cancellation
\cite[Propositions~2.18 and~2.20]{Melleray}.

For dynamical comparison without minimality, the main difficulty is
that a nonzero type need not be an order unit. Starting from a strict
inequality under all invariant measures, a uniform F\o lner argument
first produces a clopen injection after a common finite amplification.
Only finitely many group elements occur in this injection, and fixing
them gives uniform control of every target remainder arising during
the construction.
The partial matching is then improved by finitely many local
rearrangements. If some source remains unmatched, sufficiently many
disjoint copies of this remainder can be packed into the original
amplified source, while a fixed number of copies of the corresponding
target remainder already dominates the whole space. Choosing the
parameters appropriately, these two estimates place the two
remainders within the range of repeated applications of
Theorem~\ref{thm:main-three-two}. The remaining source can therefore
be absorbed into the remaining target, completing the comparison. The details appear in
Subsection~\ref{subsec:nonminimal-comparison}.

\vspace{.3cm}

A related question concerns the invariant measures themselves.
Given a minimal amenable Cantor action, can a minimal
homeomorphism of the same space have exactly the same invariant
probability measures? This question is discussed in
\cite[Introduction]{Melleray} and is related to the realization
of prescribed families of measures by minimal homeomorphisms
\cite{IbarluciaMelleray}.

Glasner's affine evaluation maps provide a way to formulate the
relevant condition using clopen sets \cite{Glasner}.
Put $Q=\cM_\Gamma(X)$, and write $\Clop(X)$ for the clopen
subsets of $X$. The \emph{affine evaluation map} assigns to
$A\in\Clop(X)$ the function
\[
   \widehat A:Q\to\mathbb R,
   \qquad \widehat A(\mu)=\mu(A).
\]
It is \emph{good} if $\widehat A<\widehat B$ pointwise on $Q$
implies that some clopen $D\subseteq B$ satisfies
$\widehat D=\widehat A$.
Glasner asked whether this holds for the invariant measures of
every minimal amenable Cantor action
\cite[Question~11.2]{Glasner}.
Comparison gives an affirmative answer: the disjoint translates
of the pieces of $A$ form the required subset $D$ of $B$.

The same evaluation also relates the coinvariant group to the
ordered group associated with $Q$. Following
\cite[Section~4]{Glasner}, let $G_Q$ be the quotient of
$C(X,\mathbb Z)$ in which two functions are identified when their
integrals agree against every measure in $Q$. The subgroup
$\Inf(H(\alpha))$ consists of the coinvariant classes whose
integrals vanish against every measure in $Q$. See Subsection~\ref{subsec:coinvariant-prelim} for the precise
definitions and conventions used here.

\begin{maintheorem}\label{thm:main-aem}
Let $\alpha:\Gamma\acts X$ be a minimal action of a countably
infinite discrete amenable group on a Cantor space, and put
$Q=\cM_\Gamma(X)$.  Then:
\begin{enumerate}[label=\textup{(\roman*)}]
\item the affine evaluation map associated with $Q$ is good\footnote{This statement also holds without
minimality, by Corollary~\ref{cor:nm-direct-consequences}.};
\item there is a minimal homeomorphism $T:X\to X$ such that
      $\cM_T(X)=Q$;
\item the identity on $C(X,\mathbb Z)$ induces an isomorphism of
      unital ordered groups
      \[
         H(\alpha)/\Inf(H(\alpha))\cong G_Q,
      \]
      where $q$ denotes  the quotient map. The quotient has positive cone $q(H(\alpha)^+)$
      and order unit $\bar u:=q([1_X]_H)$.  This quotient is a countable
      simple dimension group, its normalized state space is
      canonically affinely homeomorphic to $Q$, and
the scale of $G_Q$, with $u_Q:=[1_X]_Q$, has the form
$$
[0, u_Q]_{G_Q} =  \{[1_A]_Q :  A \in \Clop(X)\}.
$$
Under the canonical isomorphism, this becomes
$$
[0,\bar u]_{H(\alpha)/\Inf(H(\alpha))}  =\{q([\one_A]_H):A\in\Clop(X)\}.
$$
\end{enumerate}
\end{maintheorem}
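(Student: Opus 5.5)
I plan to derive all three items from Theorem~\ref{thm:main-comparison}, using minimality only to ensure that $Q$ is nonempty and that every nonempty clopen set has strictly positive measure under every $\mu\in Q$.

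\emph{Item (i).} Suppose $\widehat A<\widehat B$ pointwise on $Q$. If $A=\emptyset$, take $D=\emptyset$. Otherwise Theorem~\ref{thm:main-comparison}(i) gives $g\in[[\alpha]]$ with $g(A)\subseteq B$. Put $D=g(A)$. Since $g$ agrees with elements of $\Gamma$ on the pieces of a finite clopen partition, $g$ preserves every $\mu\in Q$. Hence $\widehat D=\widehat A$.

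\emph{Item (iii).} I would argue in the following order.

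First, the map $C(X,\mathbb Z)\to G_Q$ kills every coboundary $f-f\circ\gamma^{-1}$, by invariance of the measures. So it factors through $H(\alpha)$. Its kernel is exactly $\Inf(H(\alpha))$ by the definition of both objects, and it is surjective. This gives the group isomorphism $H(\alpha)/\Inf\cong G_Q$.

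Second, I would match the positive cones. The cone of $G_Q$ should be the classes $[f]_Q$ with $\int f\,d\mu>0$ for all $\mu\in Q$, together with $0$. I must show each such class is represented by a nonnegative function.
\begin{itemize}
\item Given such an $f$, write $f=f^+-f^-$ with $f^{\pm}$ nonnegative.
\item Realize $f^+$ and $f^-$ as types $a,b$ in $T(\alpha)$ using labelled copies of level sets.
\item Then $\mu(b)<\mu(a)$ for all $\mu$. By Theorem~\ref{thm:main-comparison}(i), $a=b+c$ for some type $c$.
\item Applying $\pi$ gives $[f]_H=\pi(c)\in H(\alpha)^+$.
\end{itemize}
The reverse inclusion is clear. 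The same argument shows that an element of $q(H^+)$ is either $0$ or strictly positive on $Q$: a nonzero nonnegative function has positive integral by minimality.

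Third, the properties of the quotient.
\begin{itemize}
\item \emph{Simple dimension group.} By the second step the order is the strict order given by the affine functions on $Q$. It is unperforated, and every nonzero positive element is an order unit because $\widehat{X}$ is bounded. Riesz interpolation follows from Item~(i)-style decomposition, or from the standard fact that strict-order subgroups of $\Aff(Q)$ of this kind are dimension groups. To prove interpolation concretely I would use comparison for types: given $a_i<b_j$, I would produce an interpolant via $T(\alpha)$ and $\pi$. I expect this to be the main obstacle, so I would try the Effros--Handelman--Shen criterion (simple, unperforated, densely ordered) and verify density from the infinitude of the group together with minimality, which give arbitrarily small clopen measures.
\item \emph{State space.} This is $Q$ via integration, by a standard Goodearl argument. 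Injectivity holds since clopen sets separate measures. Surjectivity holds because a state gives a finitely additive invariant measure on $\Clop(X)$, which extends to a measure in $Q$.
\end{itemize}

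Fourth, the scale. If $0\le x\le u_Q$, then $x$ is $0$, $u_Q$, or strictly between them. In the strict case, choose a nonnegative representative type $a$ with $\mu(a)<\mu(X)$. Comparison gives $a\le[X]$, so $a=[A]$ for a clopen set $A\subseteq X$. Transporting through the isomorphism gives the second formula.

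\emph{Item (ii).} By (iii) together with goodness from (i), I would invoke Glasner's realization result (or Ibarlucía--Melleray): a good affine evaluation map whose group $G_Q$ is a simple dimension group with the stated scale is realized by a minimal homeomorphism. Alternatively, I would take the Herman--Putnam--Skau Bratteli--Vershik model of $(G_Q,G_Q^+,u_Q)$, which yields a minimal $T$ on the Cantor space with $\cM_T=Q$ up to homeomorphism. I would then use goodness on both sides with the Glasner--Weiss/Akin homeomorphism criterion for good measures to conjugate so that the realization lives on $X$ with exactly $Q$.
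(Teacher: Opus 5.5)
\textbf{Gap: Riesz interpolation in item (iii).} Your proposal does not show that $G_Q$ (equivalently, $\Halpha/\Inf(\Halpha)$) has Riesz interpolation, so it does not establish that the quotient is a dimension group.

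\emph{The proposed criterion does not exist.} You fall back on an ``Effros--Handelman--Shen criterion (simple, unperforated, densely ordered)'', but no such criterion holds. Effros--Handelman--Shen says that a countable directed, unperforated group \emph{with interpolation} is an inductive limit of simplicial groups. Interpolation is an input there, not a consequence. A counterexample to your criterion: take a dense subgroup of $\Aff(K)$ containing $1$, with the strict ordering, where $K$ is a square. It is simple, unperforated, and has arbitrarily small positive elements. But its state space is $K$, which is not a simplex, so it has no interpolation.

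\emph{The ``standard fact'' needs a hypothesis you do not prove.} That a strict-order subgroup of $\Aff(Q)$ is a simple dimension group requires $Q$ to be a Choquet simplex \emph{and} the subgroup $\{\widehat f:f\in C(X,\mathbb Z)\}$ to be uniformly dense in $\Aff(Q)$. Clopen sets of uniformly small measure only give small positive elements. Uniform density amounts to an approximate divisibility statement: for each clopen $A$ and each $n$, a clopen $B$ with $n\mu(B)\approx\mu(A)$ uniformly on $Q$. You do not prove this.

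\emph{The other suggestions are not arguments.} ``Item (i)-style decomposition'' and ``produce an interpolant via $\Talpha$ and $\pi$'' are not worked out. Comparison only turns strict measure inequalities into inequalities in $\Talpha$; by itself it does not produce an element lying between two given pairs.

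\textbf{How to close it.} The paper cites Glasner's structure result, Proposition~\ref{prop:known-aem-structure}. That result needs only goodness from (i) and the fact that $Q$ is a Choquet simplex of atomless full-support measures.

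For a self-contained argument, the missing ingredient is the \emph{refinement} property of $\Talpha$, recorded in Section~\ref{subsec:types}. Transport it through the isomorphism $\pi:\Talpha\to\Halpha^+$ of Theorem~\ref{thm:main-comparison}(iii). This gives Riesz decomposition, hence interpolation, in $\Halpha$.

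Interpolation then descends to $G_Q$. Suppose $x_i\le y_j$ for $i,j\in\{1,2\}$.
\begin{enumerate}[label=\textup{(\alph*)}]
\item If some $x_i=y_j$, take $z$ equal to that element.
\item Otherwise every $y_j-x_i$ is strictly positive on $Q$. By your cone identification, arbitrary lifts satisfy $\tilde x_i\le\tilde y_j$ in $\Halpha$. The image of an interpolant in $\Halpha$ then interpolates in $G_Q$.
\end{enumerate}

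\textbf{What is sound.} Items (i) and (ii) match the paper. In (iii), the group isomorphism and the cone identification are correct. Two parts are more self-contained than the paper's citations and are correct: the state-space identification via the finitely additive invariant set function $A\mapsto\tau([\one_A]_Q)$, and the derivation of the clopen scale directly from Theorem~\ref{thm:main-comparison}(i).
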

Once goodness is established, the realization in
part~\textup{(ii)} follows from \cite[Proposition~11.1]{Glasner},
and the dimension-group, state-space, and clopen-scale
conclusions follow from
\cite[Theorem~4.3 and Proposition~5.10]{Glasner}.

The quotient in Theorem~\ref{thm:main-aem} \textup{(iii)} also
identifies exactly what invariant measures fail to detect.
The passage 
from $H(\alpha)$ to $G_Q$ forgets precisely the
infinitesimal subgroup.  In
Subsection~\ref{subsec:exact-equidecomposition}, we give this kernel
a concrete dynamical interpretation: infinitesimals measure exactly
the gap between equality of affine evaluations and actual
clopen equidecomposition by the original action.

Write $A\sim_\Gamma B$ for clopen equidecomposability and set
\[
   \mathcal H_Q
  : =
   \{h\in\Homeo(X):h_*\mu=\mu\text{ for every }\mu\in Q\}.
\]
Let $B_\alpha$ denote the subgroup of coboundaries defining
$H(\alpha)$, and put
\[
   N_Q:=\left\{f\in C(X,\mathbb Z):\int f\,d\mu=0
                    \text{ for every }\mu\in Q\right\}.
\]
The next theorem identifies the infinitesimal obstruction and
classifies the full-group orbits within each nontrivial fibre of
affine evaluation.

\begin{maintheorem}\label{thm:main-infinitesimals}
Let $\alpha:\Gamma\acts X$ be a minimal action of a countably
infinite discrete amenable group on a Cantor space, and put
$Q=\cM_\Gamma(X)$.
\begin{enumerate}[label=\textup{(\roman*)}]
\item The following conditions are equivalent:
  \begin{enumerate}[label=\textup{(\alph*)}]
  \item $\Inf(\Halpha)=0$;
  \item the evaluation map
        \[
           \rho_Q:\Talpha\to\Aff(Q),
           \qquad \rho_Q(a)(\mu)=\mu(a),
        \]
        is injective;
  \item for all clopen $A,B\subseteq X$, equality
        $\widehat A=\widehat B$ implies $A\sim_\Gamma B$;
        equivalently, $g(A)=B$ for some $g\in[[\alpha]]$;
  \item $\overline{[[\alpha]]}=\mathcal H_Q$, where the closure
        is taken in $\Homeo(X)$ with respect to the topology of
        uniform convergence of maps and inverses;
  \item $B_\alpha=N_Q$.
  \end{enumerate}
\item For every nonempty proper clopen set $A\subset X$, put
  \[
     \mathscr F_A
       :=\{B\in\Clop(X):\widehat B=\widehat A\}.
  \]
  Then the map
  \[
     \mathscr F_A/[[\alpha]]\longrightarrow\Inf(\Halpha),
     \qquad [[\alpha]]\cdot B\longmapsto[\one_B-\one_A]_H,
  \]
  is a bijection.
\end{enumerate}
\end{maintheorem}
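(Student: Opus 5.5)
We derive Theorem~\ref{thm:main-infinitesimals} from Theorem~\ref{thm:main-comparison}\,(iii),(iv) and Theorem~\ref{thm:main-aem}\,(iii). The common tool is the following identification. In the minimal Cantor case, $\pi:\Talpha\to\Halpha^+$ is an isomorphism of ordered monoids. For clopen $A,B$ we have
\[
[A]=[B]\iff[\one_A]_H=[\one_B]_H\iff g(A)=B\ \text{for some } g\in[[\alpha]],
\]
using (iii) for the first equivalence and (iv) together with the trivial converse for the second. The kernel of $\Halpha\to G_Q$ is $\Inf(\Halpha)$, and $\widehat A=\widehat B$ says precisely that $[\one_A-\one_B]_H\in\Inf(\Halpha)$.

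\textbf{Part (i).} We prove (a)$\Leftrightarrow$(e) and (a)$\Rightarrow$(b)$\Rightarrow$(c)$\Rightarrow$(a), and attach (d) to (c).

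(a)$\Leftrightarrow$(e): since $B_\alpha\subseteq N_Q$ and $N_Q/B_\alpha=\Inf(\Halpha)$, this is immediate.

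(a)$\Rightarrow$(b): via $\pi$, the map $\rho_Q$ factors as $\Talpha\cong\Halpha^+\to G_Q$. This composite is injective when $\Inf=0$.

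(b)$\Rightarrow$(c): apply (b) to $[A]$ and $[B]$, then use (iv).

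(c)$\Rightarrow$(a): this is the real content. Take $x\in\Inf(\Halpha)$. Since $\Halpha/\Inf$ is a simple dimension group whose scale is realized by clopen sets (Theorem~\ref{thm:main-aem}(iii)), I would write $x=[\one_B]_H-[\one_A]_H$ with $A,B$ clopen. To do this, take a nonempty proper clopen $A$ and aim to show $[\one_A]_H+x\in\Halpha^+$ lies below $[1_X]_H$. Its image in $G_Q$ equals that of $\one_A$, which is strictly between $0$ and $u$ on every state. I expect the following to hold in the minimal case. If $y\in\Halpha$ has strictly positive image in $\Aff(Q)$, then $y\in\Halpha^+\setminus\{0\}$. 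This should follow by writing $y=[f]-[g]$ with $f,g\ge0$, applying Theorem~\ref{thm:main-comparison}(i) to the types of $g$ and $f$ to get $[f]=[g]+c$, and then cancelling. The same argument applied to $u-y$ shows $y\le u$. Then $y=\pi([B])$ for a clopen $B\subseteq X$, because the scale $[0,u]$ in $\Halpha^+\cong\Talpha$ consists of types of clopen subsets of $X$ (any $t\le[X]$ is the type of a clopen subset of $X$ by definition of $\le$). Now $\widehat B=\widehat A$, so (c) gives $[B]=[A]$ and hence $x=0$.

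(c)$\Leftrightarrow$(d): clearly $[[\alpha]]\subseteq\mathcal H_Q$, and $\mathcal H_Q$ is closed. For (c)$\Rightarrow$(d), approximate $h\in\mathcal H_Q$ as follows. Take a fine clopen partition $\{P_i\}$. Each pair $P_i$, $h(P_i)$ satisfies $\widehat{P_i}=\widehat{h(P_i)}$, so (c) gives $g_i\in[[\alpha]]$ with $g_i(P_i)=h(P_i)$. Glue the $g_i|_{P_i}$ into a single element of $[[\alpha]]$ that is uniformly close to $h$, together with its inverse. For (d)$\Rightarrow$(c), take $h\in\mathcal H_Q$ with $h(A)=B$; such $h$ exists by the Glasner--Weiss-type homogeneity for good measure families, using Theorem~\ref{thm:main-aem}(i) and \cite{Glasner}. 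Approximate $h$ by $g\in[[\alpha]]$, so that $g(A)$ is close to $B$. Since these are clopen sets, uniform closeness at a scale below a Lebesgue number yields $g(A)=B$ exactly. This step needs some care, so I would choose the approximation scale finer than the clopen structure of $A$ and $B$.

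\textbf{Part (ii).} The map is well defined and injective by the identification above, since $[\one_B-\one_A]_H=[\one_{B'}-\one_A]_H$ iff $[B]=[B']$. For surjectivity, given $x\in\Inf$, the positivity argument from (c)$\Rightarrow$(a) produces a clopen $B$ with $[\one_B]_H=[\one_A]_H+x$; this is where nonemptiness and properness of $A$ are used. The main obstacle is this positivity lemma: strictly positive affine image implies $\Halpha^+$ membership. That is where comparison plus cancellation is essential.
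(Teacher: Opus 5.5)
Your proposal is correct. For (a)$\Leftrightarrow$(e), (a)$\Rightarrow$(b)$\Rightarrow$(c), (c)$\Leftrightarrow$(d) and all of part (ii), it matches the paper's argument. For (d)$\Rightarrow$(c) this includes the appeal to Glasner's homogeneity statement, Proposition~\ref{prop:known-aem-structure}\,(iv), and the openness of the set of homeomorphisms carrying $A$ onto $B$.

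The genuine difference is (c)$\Rightarrow$(a). The paper goes through Corollary~\ref{cor:infinitesimal-relations}. It notes $B_\alpha\subseteq J_Q$, imports Glasner's identity $J_Q=N_Q$ for good affine evaluation maps (Proposition~\ref{prop:known-aem-structure}\,(iii)), and then uses (c) to get $J_Q\subseteq B_\alpha$, so $B_\alpha=N_Q$.

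You instead realize each individual infinitesimal $x$ as $[\one_B-\one_A]_H$ with $\widehat B=\widehat A$ and a prescribed nonempty proper $A$. This uses only strict comparison, the order isomorphism $\pi$, and the clopen scale. It is exactly the surjectivity argument the paper gives for part (ii) via Lemma~\ref{lem:coinvariant-order-scale}. So your (c)$\Rightarrow$(a) amounts to (ii) plus the observation that (c) collapses each fibre $\mathscr F_A$ to a single $[[\alpha]]$-orbit.

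What this buys is self-containment. It removes the dependence on Glasner's $J_Q=N_Q$ and in fact reproves it here. Every $f\in N_Q$ differs from some $\one_B-\one_A$ with $A\sim_Q B$ by an element of $B_\alpha\subseteq J_Q$. The paper's route instead presents $J_Q=N_Q$ and the exact sequence as standalone structural facts, at the price of an external citation.

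One small correction to your positivity lemma: no cancellation in $\Talpha$ is needed there. Applying $\pi$ and subtracting in the group $\Halpha$ suffices, as in Corollary~\ref{cor:nm-direct-consequences}\,(ii). Injectivity of $\pi$ is used only in the scale step that produces the clopen $B\subseteq X$.
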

  Thus every infinitesimal occurs as the difference
between $A$ and a clopen set with the same affine evaluation,
and these differences distinguish the $[[\alpha]]$-orbits in
$\mathscr F_A$.  The quotient here is a set of orbits, not a
quotient group.

\vspace{.3cm}

For free actions, comparison also has consequences for almost
finiteness and crossed products. For minimal homeomorphisms of
infinite compact metrizable spaces of finite dimension, Toms and Winter proved
$\mathcal Z$-stability of the associated crossed products
\cite{TomsWinter}.   Kerr's notion of almost finiteness
provides a dynamical counterpart of finite-dimensional approximation
in $C^*$-algebras \cite{Kerr}.  Kerr and Szab\'o proved that a free
action of a countably infinite discrete amenable group on a compact
metrizable space is almost finite if and only if it has comparison
and the small boundary property
\cite[Theorem~6.1]{KerrSzabo}.
Kerr and Naryshkin established almost finiteness for all free actions
of countably infinite elementary amenable groups on finite-dimensional
compact metrizable spaces
\cite[Theorem~A]{KerrNaryshkin}, and Naryshkin subsequently showed that
almost finiteness passes from the restriction to an infinite normal
subgroup to the full free action
\cite[Theorem~A]{NaryshkinExtensions}.

The almost-finiteness conjecture stated explicitly by Naryshkin in
\cite{NaryshkinExtensions} asks whether every free action of a countably
infinite discrete amenable group on a finite-dimensional compact
metrizable space is almost finite.
By Theorem~\ref{thm:main-comparison} \textup{(i)}, every action of a
countably infinite discrete amenable group on a compact
zero-dimensional space has dynamical comparison. Hence, in the free
metrizable case, zero-dimensionality together with
\cite[Theorem~6.1]{KerrSzabo} gives almost finiteness.
The reduction of Kerr and Szab\'o then passes from all free
zero-dimensional actions to all free actions with the topological
small boundary property \cite[Theorem~7.6]{KerrSzabo}.
\cite[Corollary~7.7]{KerrSzabo} then gives the finite-dimensional case.
In particular, this settles Naryshkin's conjecture.

\begin{maintheorem}\label{thm:main-almost-finite}
Let $\Gamma$ be  a countably infinite discrete amenable group, and let
$\Gamma\acts X$ be a free action  on a nonempty compact metrizable
space  with the topological small boundary property. Then the action
 is almost finite. 
 
 In particular,   every free action of $\Gamma$ on a
nonempty finite-dimensional compact metrizable space is almost finite.
\end{maintheorem}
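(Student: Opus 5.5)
The plan is to combine Theorem~\ref{thm:main-comparison}\,\textup{(i)} with the structural results of Kerr and Szab\'o, in three steps.

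\textbf{Step 1: the zero-dimensional case.} Let $\Gamma\acts Y$ be a free action on a nonempty compact metrizable zero-dimensional space. By Theorem~\ref{thm:main-comparison}\,\textup{(i)}, every pair of clopen sets $A,B\subseteq Y$ with $\mu(A)<\mu(B)$ for all $\mu\in\cM_\Gamma(Y)$ satisfies $A\precsimG B$. This is dynamical comparison in the clopen formulation.

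Kerr's comparison is phrased for open sets: $\mu(\overline{U})<\mu(V)$ for all invariant $\mu$ should imply $U\precsim V$. So I first check that, on a zero-dimensional space, clopen comparison implies the open-set version. Given open $U,V$ with this strict inequality, I will proceed as follows.

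\begin{enumerate}
\item Take a closed set $C\subseteq U$. Since $\cM_\Gamma(Y)$ is weak$^*$ compact, a compactness argument gives $\delta>0$ with $\mu(\overline U)+\delta<\mu(V)$ for all $\mu$.
\item Cover $C$ by a clopen $A\subseteq U$.
\item Choose a clopen $B\subseteq V$ with $\mu(B)>\mu(V)-\delta/2$ uniformly in $\mu$. To get uniformity, exhaust $V$ by an increasing sequence of clopen sets; the measures $\mu(B_n)$ increase to $\mu(V)$, so Dini's theorem on the compact set $\cM_\Gamma(Y)$ gives uniform convergence, using lower semicontinuity of $\mu\mapsto\mu(V)$.
\item Apply clopen comparison to $A$ and $B$. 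This gives $C\subseteq A\precsim B\subseteq V$, which is exactly what Kerr's definition requires.
\end{enumerate}

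Since zero-dimensional spaces trivially have the small boundary property, \cite[Theorem~6.1]{KerrSzabo} then shows the action is almost finite.

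\textbf{Step 2: actions with the topological small boundary property.} For a free action $\Gamma\acts X$ with this property, I invoke \cite[Theorem~7.6]{KerrSzabo}. That result reduces almost finiteness of such actions to almost finiteness of all free actions of $\Gamma$ on zero-dimensional compact metrizable spaces. Its proof builds a zero-dimensional free extension or model through which comparison transfers, so it needs nothing beyond Step~1. Before applying it, I must check that its hypotheses are exactly ``all free zero-dimensional actions of $\Gamma$ are almost finite''. I expect this hypothesis-matching to be the only delicate point, since Step~1 delivers that statement for every such action.

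\textbf{Step 3: the finite-dimensional case.} The second assertion follows from \cite[Corollary~7.7]{KerrSzabo}, which states that free actions of countably infinite amenable groups on finite-dimensional compact metrizable spaces have the topological small boundary property. Combined with Step~2, this gives almost finiteness.

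I expect the main obstacle to be Step~1's translation between the clopen comparison proved here and the open-set comparison used by Kerr and Szab\'o. That translation relies only on zero-dimensionality and compactness of the space of invariant measures.
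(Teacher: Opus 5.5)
Your outline is the paper's argument. Theorem~\ref{thm:main-comparison}\,\textup{(i)} gives clopen comparison for every free zero-dimensional action, and this is upgraded to open-set comparison. SBP is automatic, so \cite[Theorem~6.1]{KerrSzabo} gives almost finiteness, and \cite[Theorem~7.6]{KerrSzabo} lifts this to all free actions with TSBP. The finite-dimensional case follows because free actions on finite-dimensional spaces have TSBP. The paper takes that input from Szab\'o's theorem (Proposition~\ref{prop:external-finite-dim}), with \cite[Corollary~7.7]{KerrSzabo} recording the resulting finite-dimensional statement, rather than reading TSBP off Corollary~7.7 as you do. The one real difference is that the paper cites \cite[Proposition~3.6]{Kerr} (Lemma~\ref{lem:clopen-open-comparison}) for the clopen-to-open translation. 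You try to prove it yourself, and one step of that proof fails as written.

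The failing step is your item~3. Dini's theorem needs a \emph{continuous} limit, and $\mu\mapsto\mu(V)$ is only lower semicontinuous. So a single clopen $B\subseteq V$ with $\mu(B)>\mu(V)-\delta/2$ for all $\mu$ need not exist. For instance, let $\mathbb Z$ act by the odometer on $K$ and trivially on $S=\{0\}\cup\{1/k:k\geq1\}$, and put $V=K\times(S\setminus\{0\})$. With $\lambda$ the Haar measure, $\mu_k=\lambda\otimes\delta_{1/k}$ satisfies $\mu_k(V)=1$. But every clopen $B\subseteq V$ is compact, so it lies in finitely many fibres, and hence $\mu_k(B)=0$ for large $k$.

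Uniformity is not actually needed. Take a clopen $A$ with $C\subseteq A\subseteq U$. Then $\mu(A)\leq\mu(U)<\mu(V)$ for every $\mu$, so you can work directly with the hypothesis $\mu(U)<\mu(V)$ from the paper's definition of open-set comparison, with no closure and no $\delta$. The functions $\mu\mapsto\mu(B_n)-\mu(A)$ are continuous and increase pointwise to a strictly positive limit. Hence the open sets $\{\mu:\mu(B_n)>\mu(A)\}$ increase and cover the compact space $\cM_\Gamma(Y)$, so a single $B_N$ satisfies $\mu(A)<\mu(B_N)$ for all $\mu$. Clopen comparison applied to $A$ and $B_N$ then completes your item~4. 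With this repair, or by citing Kerr as the paper does, your proof is complete.
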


For free minimal actions, Theorem~\ref{thm:main-almost-finite} also
yields regularity of the associated crossed products.
Recall that the Toms--Winter conjecture predicts the equivalence of
several regularity properties for separable, simple, unital,
infinite-dimensional nuclear $C^*$-algebras, including finite nuclear
dimension, $\mathcal Z$-stability, and strict comparison of positive
elements; see \cite[Conjecture~9.3]{WinterZachariasNuclearDimension}.
Kerr and Szab\'o established this equivalence for crossed products
arising from free minimal actions of countably infinite amenable groups
with the small boundary property \cite[Corollary~9.5]{KerrSzabo}.
In the discussion following that corollary, they further expressed
the expectation that all such crossed products satisfy these
regularity conditions, noting that this was already known for
$\mathbb Z$-actions by the work of Elliott and Niu \cite{ElliottNiu}.
The following theorem verifies this expectation under the stronger
hypothesis of the topological small boundary property.

\begin{maintheorem}\label{thm:main-crossed-products}
Let $\Gamma$ be a countably infinite discrete amenable group, and let
$\Gamma\acts X$ be a free minimal action on a nonempty compact
metrizable space with the topological small boundary property.
Then the reduced crossed product
$
    A=C(X)\rtimes_r\Gamma
$
satisfies 
\begin{enumerate}[label=\textup{(\roman*)}]
\item $A$ is $\mathcal Z$-stable;
\item $\dim_{\mathrm{nuc}}(A)\leq 1$;
\item the Cuntz semigroup $W(A)$ is almost unperforated;
\item $A$ has strict comparison of positive elements.
\end{enumerate}
In particular, these conclusions hold for every free minimal action
of $\Gamma$ on a nonempty finite-dimensional compact metrizable space.
\end{maintheorem}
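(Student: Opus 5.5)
The plan is to combine Theorem~\ref{thm:main-almost-finite} with the known consequences of almost finiteness for crossed products; no new dynamics is needed. First, Theorem~\ref{thm:main-almost-finite} shows that the free minimal action $\Gamma\acts X$ with the topological small boundary property is almost finite. Since $X$ is infinite (it is nonempty, and $\Gamma$ is infinite and acts freely), $A=C(X)\rtimes_r\Gamma$ is separable, simple (by minimality and freeness, hence topological freeness), unital, nuclear (by amenability of $\Gamma$), and infinite-dimensional.

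For \textup{(i)}, I will invoke Kerr's theorem \cite{Kerr}: the crossed product of a free minimal almost finite action of a countably infinite amenable group on a compact metrizable space is $\mathcal Z$-stable. For \textup{(ii)}, I will use that $A$ is $\mathcal Z$-stable and satisfies the UCT, since it is the crossed product of a commutative algebra by an amenable group (Tu). Because the topological small boundary property implies the small boundary property, the Kerr--Szab\'o corollary \cite[Corollary~9.5]{KerrSzabo} places $A$ in the class where the Toms--Winter regularity properties are equivalent. The remaining task is to pass from $\mathcal Z$-stability to finite nuclear dimension. I would cite the Castillejos--Evington--Tikuisis--White--Winter theorem that $\mathcal Z$-stable simple separable nuclear unital $C^*$-algebras have $\dim_{\mathrm{nuc}}\leq1$. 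Alternatively, I would quote the form of the Kerr--Szab\'o result that already contains this bound.

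For \textup{(iii)} and \textup{(iv)}, R\o rdam's theorem shows that $\mathcal Z$-stability implies that $W(A)$ is almost unperforated. For simple exact unital algebras, almost unperforation of $W(A)$ is equivalent to strict comparison of positive elements, with comparison taken with respect to quasitraces, which are traces here by exactness (Haagerup). The final sentence of the theorem then follows from \cite[Corollary~7.7]{KerrSzabo}, in the form already used in Theorem~\ref{thm:main-almost-finite}: free actions on finite-dimensional compact metrizable spaces have the topological small boundary property.

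The main obstacle is bookkeeping rather than mathematics. I need to check that the hypotheses of each cited theorem hold exactly: infinite dimensionality and simplicity of $A$, and that the small boundary property is implied by its topological version. I also need to confirm the precise form of the nuclear dimension bound, namely $\leq1$ rather than merely finite. If no direct dynamical citation gives $\leq1$, I would route through $\mathcal Z$-stability and the CETWW theorem.
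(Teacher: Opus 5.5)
Your proposal is correct and follows essentially the same route as the paper. Almost finiteness comes from Theorem~\ref{thm:main-almost-finite}, $\mathcal Z$-stability from Kerr's theorem, $\dim_{\mathrm{nuc}}(A)\leq1$ from Castillejos--Evington--Tikuisis--White--Winter, and almost unperforation and strict comparison from R\o rdam. The UCT and the Kerr--Szab\'o Corollary~9.5 detour are unnecessary, and the fact that free actions on finite-dimensional spaces have TSBP is Szab\'o's theorem (Proposition~\ref{prop:external-finite-dim}) rather than Kerr--Szab\'o Corollary~7.7, but neither point affects correctness.
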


Here $\mathcal Z$ denotes the Jiang--Su algebra, and
$\mathcal Z$-stability means
\[
A\otimes_{\min}\mathcal Z\cong A.
\]
Since the other $C^*$-algebraic regularity notions appearing above are
used only in this consequence and its proof, we do not recall their
definitions here. For nuclear dimension, see
\cite[Definition~2.1]{WinterZachariasNuclearDimension}.
We write $W(A)$ for the Cuntz semigroup of positive elements in
matrix algebras over $A$; for this semigroup and strict comparison
of positive elements, see \cite[Section~4]{RordamZStable}.
In the finite-dimensional case, the $\mathcal Z$-stability conclusion
extends the result of Toms and Winter \cite{TomsWinter} from minimal
$\mathbb Z$-actions to free minimal actions of arbitrary countably
infinite discrete amenable groups.

\medskip
\medskip

\noindent\textbf{Organization.} Section~\ref{sec:preliminaries} collects the notation and external
results.  Section~\ref{sec:matching} develops the clopen matching
arguments, and Section~\ref{sec:bisection-matching} applies them to
finite-order bisections.  Section~\ref{sec:three-two} proves
Theorem~\ref{thm:main-three-two} and
Theorem~\ref{thm:main-comparison} \textup{(i)--(ii)}.
Section~\ref{sec:consequences} completes the proof of
Theorem~\ref{thm:main-comparison}, proves
Theorem~\ref{thm:main-aem}, and establishes
Theorem~\ref{thm:main-infinitesimals} on exact equidecomposition and
infinitesimals. 
Section~\ref{sec:almost-finite} proves
Theorem~\ref{thm:main-almost-finite} using the reduction of Kerr and
Szab\'o, thereby settling Naryshkin's almost-finiteness conjecture,
and derives the crossed-product regularity conclusions of
Theorem~\ref{thm:main-crossed-products}.

\section{Preliminaries}\label{sec:preliminaries}
Throughout the paper, we write
$\mathbb N=\{1,2,\ldots\},$ and $ [n]=\{1,\ldots,n\}$ for $n\in\mathbb N$.
If $S$ is a subset of a group $G$, then $\langle S\rangle$
denotes the subgroup of $G$ generated by $S$.

\subsection{Spaces, actions, and invariant measures}\label{subsec:spaces}
All topological spaces are Hausdorff, and all group actions are continuous left
actions of discrete groups.  For a topological space $Y$, we write $\Homeo(Y)$ for the group
of homeomorphisms of $Y$.  
 A space is \emph{zero-dimensional} if it has a basis of
clopen sets, and a \emph{Cantor space} is a nonempty compact metrizable
zero-dimensional space with no isolated points.  We write
 $\Clop(X)$ for the clopen subsets of $X$.

An action $\Gamma\acts X$ is \emph{minimal} if every orbit is dense,
and \emph{free} if $\gamma x=x$ implies $\gamma=1_\Gamma$.
For a compact space $X$, let $\cM(X)$ be its regular Borel probability
measures, with the weak* topology, and let $\cM_\Gamma(X)$ be the
 subset of invariant measures. 
 
A countably infinite discrete group $\Gamma$ is \emph{amenable}
if there is a sequence $(F_n)_{n\in\mathbb N}$ of
nonempty finite subsets of $\Gamma$ such that
\[
  \lim_{n\to\infty} \frac{|\gamma F_n\mathbin\triangle F_n|}{|F_n|}=0
   \qquad(\gamma\in\Gamma).
\]
Such a sequence is called a \emph{F\o lner sequence}.
If  $X$ is nonempty and compact and $\Gamma$ is amenable, then
$\cM_\Gamma(X)\neq\varnothing$.
When $X$ is compact metrizable, $\cM_\Gamma(X)$ is compact and
metrizable.  If it is nonempty, the ergodic decomposition theorem
implies that it is a Choquet simplex.

For a minimal action on a compact space, every invariant probability
measure has full support, since its support is a nonempty closed
invariant set.  For a minimal Cantor action, every orbit is infinite,
so every invariant probability measure is atomless. 

A \emph{clopen partial homeomorphism} from $L$ to $R$ is a
homeomorphism $\phi:D\to D'$ between clopen subsets $D\subseteq L$
and $D'\subseteq R$. Inverses, compositions on their natural domains, and restrictions to clopen subsets are again of this form.

For an action of a finite group $J$ on $Z$, a
\emph{fundamental domain} meets each orbit exactly once.
The Cantor-space statement is
\cite[Lemma~A.2]{GiordanoMatuiPutnamSkau2010}.  By the same argument, we have the following for compact
zero-dimensional spaces.
\begin{lemma}\label{lem:finite-group-domain}
A free action of a finite group $J$ on a compact zero-dimensional
space $Z$ has a clopen fundamental domain $D$.  In particular,
$Z=\bigsqcup_{j\in J}jD$.
\end{lemma}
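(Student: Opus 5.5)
Since $J$ acts freely, for each $z\in Z$ the orbit points $jz$ with $j\in J$ are pairwise distinct. We exploit the fact that $J$ is finite and $Z$ is compact and zero-dimensional. The first step is a local separation property: every $z$ has a clopen neighbourhood $U$ whose translates $jU$, $j\in J$, are pairwise disjoint. To get one, note that the points $jz$ are distinct, so Hausdorffness gives pairwise disjoint open neighbourhoods $V_j\ni jz$. Zero-dimensionality lets us shrink each $V_j$ to a clopen set. Then put $U=\bigcap_{j\in J} j^{-1}V_j$. This set is clopen and contains $z$, and $jU\subseteq V_j$, so the translates $jU$ are pairwise disjoint. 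By compactness, finitely many such sets $U_1,\dots,U_m$ cover $Z$.

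The second step builds the domain greedily, in the spirit of \cite[Lemma~A.2]{GiordanoMatuiPutnamSkau2010}. Set $D_1=U_1$ and, for $k\ge2$,
\[
D_k=U_k\setminus \bigcup_{j\in J} j(D_1\cup\dots\cup D_{k-1}),
\]
and finally $D=D_1\cup\dots\cup D_m$. Each $D_k$ is clopen, because $J$ is finite and translates of clopen sets are clopen. Hence $D$ is clopen.

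The third step checks that $D$ meets each orbit exactly once. For the "at least once" part, take $z\in Z$. By construction, $\bigcup_j jD \supseteq U_1\cup\dots\cup U_m=Z$, since every point of $U_k$ lies either in $D_k$ or in some translate of an earlier $D_i$. So $Jz$ meets $D$.

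For the "at most once" part, suppose $x,jx\in D$ with $j\neq 1$. Choose $k$ and $l$ with $x\in D_k$ and $jx\in D_l$.
\begin{itemize}
\item If $k=l$, then $x$ and $jx$ both lie in $U_k$. This contradicts the disjointness of $U_k$ and $jU_k$.
\item If $l>k$, then $jx\in j D_k$, which was removed when $D_l$ was formed. This is a contradiction.
\item If $l<k$, then $x=j^{-1}(jx)\in j^{-1}D_l$, which was removed when $D_k$ was formed. This is a contradiction.
\end{itemize}

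Finally, the decomposition follows from freeness. The translates $jD$ are pairwise disjoint: a point in $jD\cap j'D$ with $j\ne j'$ would give two distinct points of a single orbit in $D$. Their union is $Z$ by the third step. Hence $Z=\bigsqcup_{j\in J} jD$.

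The only point needing care is the first step, which is where freeness, Hausdorffness and zero-dimensionality are all used. Metrizability is not needed anywhere.
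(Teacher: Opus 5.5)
Your proof is correct and follows essentially the route the paper relies on. The paper gives no proof of its own; it cites \cite[Lemma~A.2]{GiordanoMatuiPutnamSkau2010} and notes that the same argument works for compact zero-dimensional spaces, and your construction (clopen neighbourhoods with disjoint translates, a finite subcover, then successively removing saturations of earlier pieces) carries this out without using metrizability.
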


\subsection{The clopen type semigroup and comparison}\label{subsec:types}
The notion of a type semigroup goes back to Tarski's work on equidecomposition and amenability. 
Its clopen version for actions on zero-dimensional compact spaces was introduced by 
R{\o}rdam and Sierakowski, \cite{RordamSierakowski}, and was subsequently used by Kerr \cite{Kerr} and Ma  
\cite{Ma} in the study of dynamical comparison and almost unperforation.
Melleray developed a systematic treatment of these clopen type semigroups \cite{Melleray}.
We now recall the definition of this semigroup associated with an action of a group $\Gamma$.

Let $\alpha:\Gamma\acts X$ be a continuous action on a compact
zero-dimensional space.
Put $\widetilde X:=X\times\mathbb N$, with $\mathbb N$ discrete,
and write $\Clop_b(\widetilde X)$ for the bounded clopen subsets
of $\widetilde X$, namely those contained in
$X\times[m]$ for some $m$.
Two such sets $D,D'$ are \emph{equidecomposable} (denoted $D \sim_\alpha D'$) if $D$ has a finite
clopen partition into pieces $A_t\times\{i_t\}$ and there are
$\gamma_t\in\Gamma$ and sheet indices $j_t$ such that
\[
   D'=\bigsqcup_t\gamma_tA_t\times\{j_t\}.
\]
Thus each piece is mapped by $(x,i_t)\mapsto(\gamma_tx,j_t)$.
The equivalence class of $D$ is denoted $[D]$, and 
$T(\alpha)$, which is called the \emph{clopen type
semigroup}, is the set of all such classes. 

For $D,E\in\Clop_b(\widetilde X)$, define $[D]+[E]$ by
taking their union after relabelling their sheet coordinates to
be disjoint.  This makes $T(\alpha)$ a commutative monoid with
zero $[\varnothing]$.  Its algebraic preorder is
\[
   a\leq b\quad\Longleftrightarrow\quad
   a+c=b\text{ for some }c\in\Talpha.
\]
Equivalently, $a\leq b$ means that a representative of $a$ is
equidecomposable with a clopen subset of a representative of $b$.
Note that we do not assume that this preorder is antisymmetric.   For simplicity, we  write $[A]=[A\times\{1\}]$ for $A\in\Clop(X)$ and $u=[X]$.

 For $f\in C(X,\mathbb Z_{\geq0})$, put
\[
   \Sigma(f):=\bigsqcup_{j\geq1}\{x:f(x)\geq j\}\times\{j\},
   \qquad [f]_T:=[\Sigma(f)].
\]
Since $f$ is bounded, $\Sigma(f)\in\Clop_b(\widetilde X)$.
For $D\in\Clop_b(\widetilde X)$, define $$m_D(x):=|\{j:(x,j)\in D\}|.$$ Then
$$[D]=[m_D]_T.$$  
  Addition of functions
corresponds to addition of types, and $f\leq g$ pointwise implies
$[f]_T\leq[g]_T$.  We keep the subscript when it is necessary to
distinguish a type from a coinvariant class.

An invariant probability measure evaluates types by
\[
   \mu([D])=\sum_j\mu(D_j),\qquad\text{where }
   D_j:=\{x\in X:(x,j)\in D\}.
\]
This is independent of the representative and gives an additive map
with $\mu(u)=1$.  Such maps are called normalized \emph{states}.
Since
$
   f=\sum_{j\ge1}\one_{\{f\ge j\}},
$
with only finitely many nonzero terms, we have \begin{equation}\label{eq:formu}
    \mu([f]_T)=\int f\,d\mu.
\end{equation}
For a minimal action these states are \emph{faithful}, meaning that
$\mu(a)=0$ implies $a=0$.

For clopen $A,B\subseteq X$, write $A\precsimG B$ when
$[A]\leq[B]$.  Equivalently, there is a finite clopen partition
$A=\bigsqcup_t A_t$ and elements $\gamma_t\in\Gamma$ whose images
$\gamma_t A_t$ are pairwise disjoint subsets of $B$.
The action has \emph{dynamical comparison} if
\[
   \mu(A)<\mu(B)\quad(\mu\in\cM_\Gamma(X))
   \quad\Longrightarrow\quad A\precsimG B
\]
for all nonempty clopen $A,B$.
For clopen $A,B\subseteq X$, we also write $A\sim_\Gamma B$ when
$[A]=[B]$ in $\Talpha$.  Thus $A\precsimG B$ means that
$A\sim_\Gamma C$ for some clopen $C\subseteq B$.

A commutative monoid $M$ is \emph{conical} if $a+b=0$ forces
$a=b=0$, and has \emph{refinement} if every equality
$a_1+a_2=b_1+b_2$ can be written as
$a_i=c_{i1}+c_{i2}$ and $b_j=c_{1j}+c_{2j}$.
An element $v$ is an \emph{order unit} if every element is at most
an integer multiple of $v$, and $M$ is \emph{simple} if every nonzero
element is an order unit.  It is \emph{cancellative} if
$a+c=b+c$ implies $a=b$.  It is \emph{almost unperforated} if for all $n\ge 1$ and $a,b \in M$
\[
   (n+1)a\leq nb\quad\Longrightarrow\quad a\leq b.
\]
For a simple monoid with order unit $u$, \emph{weak comparability (with respect to $u$)}
means that for every $a\neq0$ there is $k\geq1$ such that  for all $b$,
$kb\leq u$ implies $b\leq a$.

The type monoid $T(\alpha)$ is conical and has refinement, and
minimality implies simplicity; see
\cite[Definition~2.3 and the following paragraph]{Melleray}.
In particular, $u=[X]$ is an order unit.

When $\alpha$ is minimal, we specialize the preceding notion of weak
comparability to this distinguished order unit $u=[X]$.  Thus, for a minimal action,
\emph{weak comparability of $T(\alpha)$} will always mean weak
comparability with respect to the order unit
$u=[X]$.

We record 
the comparison results used later.
\begin{theorem}\label{thm:known-type-results}
Let $\alpha:\Gamma\acts X$ be an action of a countably infinite
discrete group on a compact zero-dimensional space.
\begin{enumerate}[label=\textup{(\roman*)}]
\item If $\alpha$ is minimal, then weak comparability of $\Talpha$
      is equivalent to almost unperforation
      \cite[Proposition~2.20(1)]{Melleray}.
\item If $\alpha$ is minimal and $\cM_\Gamma(X)\neq\varnothing$,
      either condition in \textup{(i)} implies that $\Talpha$ is
      cancellative \cite[Proposition~2.20(2)]{Melleray}.
\item If $\alpha$ is minimal, then dynamical comparison is equivalent
      to almost unperforation \cite[Proposition~2.18]{Melleray}.
\item For any $\alpha$, dynamical comparison is equivalent to the
      following assertion: if $a,b\in\Talpha$ are nonzero and
      $\mu(a)<\mu(b)$ for every $\mu\in\cM_\Gamma(X)$, then
      $a\leq b$ \cite[Proposition~2.15]{Melleray}.
\end{enumerate}
\end{theorem}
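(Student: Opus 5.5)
Each item of Theorem~\ref{thm:known-type-results} is quoted from \cite{Melleray}. My plan is therefore not to reprove these results from scratch. Instead I will check that Melleray's setting matches ours and that his arguments do not use metrizability, which we do not assume. The first step is to compare definitions. Our $\Clop_b(\widetilde X)$ with sheetwise equidecomposition is exactly Melleray's model of the clopen type semigroup. Our normalized states $\mu\mapsto\mu(a)$ come from $\cM_\Gamma(X)$, and the identity \eqref{eq:formu} shows they are the same as his. Conicality, refinement, and simplicity under minimality are then available as recalled above.

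Next I will go through the items and record how each argument runs, so that any hidden use of metrizability would be noticed.

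\textbf{Item (iii), comparison implies almost unperforation.} Suppose $(n+1)a\le nb$ with $a\neq0$. Minimality makes every state faithful, so $\mu(b)>0$ and
\[
\mu(a)\le \tfrac{n}{n+1}\mu(b)<\mu(b)
\]
for every $\mu$. Item (iv) then gives $a\le b$.

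\textbf{Item (iii), almost unperforation implies comparison.} Here I will use the Goodearl--Handelman / Tarski-type theorem for preordered monoids with order unit $u$. It says that if $\mu(a)<\mu(b)$ for every normalized state, then $(k+1)a\le kb$ for some $k$. Almost unperforation then finishes the argument. To apply it, every normalized state on $\Talpha$ must come from some $\mu\in\cM_\Gamma(X)$. I will get this by restricting the state to $\Clop(X)$. This gives a finitely additive invariant measure, which extends to a regular Borel measure by the Riesz representation theorem applied to $C(X)$. Clopen sets span a dense subalgebra of $C(X)$ for any compact zero-dimensional $X$, so no metrizability is needed here.

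\textbf{Item (i).} One direction: if $(n+1)a\le nb$ with $a\neq0$, then weak comparability is applied to $a$, using that $b$ is an order unit. The other direction: almost unperforation together with simplicity gives weak comparability. For this I take $k$ with $u\le k a$ and then use the state argument just described.

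\textbf{Item (ii).} Suppose $a+c=b+c$. By simplicity, $c\le mu$ for some $m$. Weak comparability lets me absorb $c$, first producing $a\le b+\varepsilon$-small remainders. I then cancel using refinement, and the existence of an invariant measure excludes degenerate cases.

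\textbf{Item (iv).} Dynamical comparison is stated only for sets, so the task is to pass from sets to types. Given nonzero types $a,b$ with $\mu(a)<\mu(b)$ for all $\mu$, I will represent them on finitely many sheets. Using compactness of $\cM_\Gamma(X)$, I will split them into clopen pieces in $X$ whose measures still satisfy strict inequalities uniformly in $\mu$, and then apply comparison piece by piece.

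I expect the main obstacle to be item (iv) together with the state-extension step. Both the splitting of types into single-sheet pieces and the identification of states with invariant measures need uniform strict inequalities, and these in turn rely on weak$^*$ compactness of $\cM_\Gamma(X)$ rather than on any metric. If carrying this out in full becomes too long, I will simply cite \cite[Propositions~2.15, 2.18, 2.20]{Melleray} and note that their proofs are insensitive to metrizability.
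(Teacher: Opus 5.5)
Your fallback is exactly what the paper does. Theorem~\ref{thm:known-type-results} is stated without proof, as a compilation of \cite[Propositions~2.15, 2.18, 2.20]{Melleray}. Only items (i) and (ii) are ever used, in the proof of Theorem~\ref{thm:main-comparison}\,(ii), where $X$ is a Cantor space, so metrizability never arises there. Two of your checks are correct as they stand: the identification of normalized states on $\Talpha$ with $\cM_\Gamma(X)$ (restrict to $\Clop(X)$, extend through the dense span of clopen indicators, apply Riesz) is metrizability-free, and your derivation of almost unperforation from comparison in (iii) is fine. However, several of the self-contained sketches you offer as a ``check'' would not survive being written out, so do not present them as verification.

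\textbf{Item (iv).} Splitting $a,b$ into single-sheet clopen pieces with $\mu(A_t)<\mu(B_t)$ uniformly in $\mu$ is impossible in general. Suppose some invariant measure is a point mass $\delta_p$ at a fixed point $p$; this happens already for the trivial action on a one-point space, with $a=[X]$ and $b=2[X]$. Then every single-sheet piece of $a$ containing $p$ has $\delta_p$-value $1$, and no single-sheet piece of $b$ has $\delta_p$-value exceeding $1$, yet $a\le b$ holds trivially. At the very least you must first cancel the pointwise common part $\min(m_A,m_B)$ by the identity and compare only the excesses. Even then, you would need clopen pieces whose evaluation functions lie strictly between prescribed continuous affine functions on $\cM_\Gamma(X)$. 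That is an interpolation property of clopen values, and compactness of $\cM_\Gamma(X)$ does not supply it.

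\textbf{Items (i) and (ii).} For weak comparability $\Rightarrow$ almost unperforation, applying weak comparability to $a$ only tells you that sufficiently small types lie below $a$. That gives no route from $(n+1)a\le nb$ to $a\le b$. Applying it to $b$ does not help either, since $c_1\le b$ and $c_2\le b$ do not give $c_1+c_2\le b$. Similarly, ``absorb $c$, then cancel using refinement'' names no mechanism. These two implications are the substantive content of \cite[Proposition~2.20]{Melleray}, and they must be cited or proved by genuine refinement-monoid arguments.

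The converse direction is much easier than you make it. If $u\le ma$ and $(m+1)b\le u$, then $(m+1)b\le ma$, so almost unperforation gives $b\le a$. Take $k=m+1$; no states are needed.

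Finally, the Goodearl--Handelman-type statement you quote needs $b$ to be an order unit, which is automatic for $b\neq0$ under minimality. As you phrased it, for arbitrary $b$, it fails, for example when there are no states and $b=0$.
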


\subsection{Coinvariants, full groups, and affine evaluation}
\label{subsec:coinvariant-prelim}
Use the left action on functions
$(\gamma\cdot f)(x)=f(\gamma^{-1}x)$ and set
\[
   B_\alpha:=\left\langle f-\gamma\cdot f:
      f\in C(X,\mathbb Z),\ \gamma\in\Gamma\right\rangle,
   \qquad \Halpha:=C(X,\mathbb Z)/B_\alpha.
\]
For $f\in C(X,\mathbb Z)$ its class is $[f]_H\in H(\alpha)$.
We call $H(\alpha)$ the \emph{group of coinvariants} of the action.

For an abelian group $D$, a \emph{cone} is an additive submonoid
$D^+\subseteq D$ containing $0$.  It is \emph{proper} if
\[
   D^+\cap(-D^+)=\{0\}.
\]
A proper cone defines a partial order on $D$ by
\[
   x\leq y
   \quad\Longleftrightarrow\quad
   y-x\in D^+.
\]
For $x\leq y$ in an ordered abelian group $D$, write
\[
   [x,y]_D:=\{d\in D:x\leq d\leq y\}
\]
for the corresponding order interval.
The canonical additive cone is
\[
   \Halpha^+:=\{[f]_H:f\in C(X,\mathbb Z_{\geq0})\}.
\]
Every equidecomposition preserves coinvariant classes, so
\begin{equation}\label{eq:iso}
       \pi:\Talpha\to\Halpha^+,
   \qquad \pi([f]_T)=[f]_H
\end{equation}
is an additive surjection.  The following criterion is
\cite[Proposition~2.5]{Melleray}.
\begin{proposition}\label{prop:known-coinvariant}
The map $\pi$ is injective if and only if $\Talpha$ is cancellative.
\end{proposition}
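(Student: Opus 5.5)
The plan is to prove both directions directly. The only tools needed are the additivity of $f\mapsto[f]_T$ on $C(X,\mathbb Z_{\geq0})$, recorded in Subsection~\ref{subsec:types}, and the fact that $\Halpha$ is a group.

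\emph{Injective implies cancellative.} Suppose $\pi$ is injective and $a+c=b+c$ in $\Talpha$. Since $\pi$ is additive, $\pi(a)+\pi(c)=\pi(b)+\pi(c)$ in $\Halpha^+$. This cone sits inside the abelian group $\Halpha$, so we may cancel and get $\pi(a)=\pi(b)$. Injectivity then gives $a=b$.

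\emph{Cancellative implies injective.} Assume $\Talpha$ is cancellative, and let $f,g\in C(X,\mathbb Z_{\geq0})$ satisfy $[f]_H=[g]_H$. The aim is to show $[f]_T=[g]_T$.

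The first step is translation invariance of types. For $p\in C(X,\mathbb Z_{\geq0})$ and $\gamma\in\Gamma$,
\[
\{x:(\gamma\cdot p)(x)\geq j\}=\gamma\{x:p(x)\geq j\}
\quad\text{for every } j.
\]
Hence $\Sigma(\gamma\cdot p)$ is obtained from $\Sigma(p)$ by applying $(x,j)\mapsto(\gamma x,j)$ on each sheet. This is an equidecomposition with one piece per sheet, so $[\gamma\cdot p]_T=[p]_T$.

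The second step uses the definition of $B_\alpha$. We can write
\[
f-g=\sum_{i=1}^n\bigl(h_i-\gamma_i\cdot h_i\bigr),
\qquad h_i\in C(X,\mathbb Z),\ \gamma_i\in\Gamma.
\]
Here we absorb signs, using $-(h-\gamma\cdot h)=(-h)-\gamma\cdot(-h)$. Split $h_i=p_i-q_i$ with $p_i=\max(h_i,0)$ and $q_i=\max(-h_i,0)$, both nonnegative and continuous. Rearranging gives an identity of nonnegative continuous functions:
\[
f+\sum_i\gamma_i\cdot p_i+\sum_i q_i
=g+\sum_i p_i+\sum_i\gamma_i\cdot q_i .
\]

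The third step applies $[\cdot]_T$ to both sides. Additivity, together with $[\gamma_i\cdot p_i]_T=[p_i]_T$ and $[\gamma_i\cdot q_i]_T=[q_i]_T$, turns this into
\[
[f]_T+c=[g]_T+c,
\qquad c:=\sum_i\bigl([p_i]_T+[q_i]_T\bigr).
\]
Cancellation gives $[f]_T=[g]_T$.

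Every type has the form $[f]_T$, namely $[D]=[m_D]_T$, so this shows $\pi$ is injective. The same translation-invariance computation also underlies the well-definedness of $\pi$ stated before \eqref{eq:iso}.

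The only point needing care is the second step. A coboundary involves integer-valued, possibly negative, functions, while $[\cdot]_T$ is defined only on nonnegative ones. Splitting into positive and negative parts and moving terms across the identity resolves this. Everything else is routine.
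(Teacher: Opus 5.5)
Your proof is correct. The paper gives no argument of its own for this statement; it imports it from \cite[Proposition~2.5]{Melleray}. Your proof is therefore a self-contained replacement for the citation rather than a variant of something in the text.

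The substance of your second direction is a fact that needs no hypotheses: for $a,b\in\Talpha$, one has $\pi(a)=\pi(b)$ if and only if $a+c=b+c$ for some $c\in\Talpha$. The ``if'' part is additivity of $\pi$ plus cancellation in the group $\Halpha$. In other words, $\pi$ exhibits $\Halpha$ as the Grothendieck group of $\Talpha$, and the equivalence between injectivity and cancellativity follows formally. The one genuine obstacle is that $[\cdot]_T$ is defined only on $C(X,\mathbb Z_{\geq0})$. You handle it correctly by splitting $h_i=p_i-q_i$, moving terms across to obtain an identity of nonnegative functions, and using $[\gamma\cdot p]_T=[p]_T$. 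Like the cited result, your argument uses no amenability, minimality, or metrizability. The citation buys brevity. Your version makes the paper independent of that reference here and shows explicitly that cancellation is exactly the obstruction to injectivity of $\pi$.

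One small correction to your closing remark. Well-definedness of $\pi$ is not the translation-invariance computation but its converse. If $D\sim_\alpha D'$ via pieces $A_t\times\{i_t\}\mapsto\gamma_tA_t\times\{j_t\}$, then $m_{D'}-m_D=\sum_t(\gamma_t\cdot\one_{A_t}-\one_{A_t})\in B_\alpha$. This does not affect your proof, since the paper records well-definedness before \eqref{eq:iso}.
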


The \emph{topological full group} $[[\alpha]]$ consists of those
homeomorphisms of $X$ which agree with elements of $\Gamma$ on the
parts of some finite clopen partition.
For a nonempty compact convex $Q\subseteq\cM(X)$, let $\Aff(Q)$
denote the real vector space of continuous affine functions on $Q$.
The \emph{affine evaluation map} sends $A\in\Clop(X)$ to the continuous
affine function $\widehat A(\mu)=\mu(A)$ on $Q$.
It is \emph{good} if $\widehat A<\widehat B$ pointwise on $Q$
implies that some clopen $D\subseteq B$ satisfies
$\widehat D=\widehat A$.
Write $A\sim_Q B$ when $\widehat A=\widehat B$, and put
\[
   \mathcal H_Q:=\{h\in\Homeo(X):h_*\mu=\mu\text{ for every }\mu\in Q\}.
\]
  When
$Q=\cM_\Gamma(X)$, one always has
$[[\alpha]]\subseteq\mathcal H_Q$ and
$A\sim_\Gamma B\Rightarrow A\sim_Q B$.

We recall some notation from  \cite[Section~4]{Glasner}.
Define
\[
   N_Q:=\left\{f\in C(X,\mathbb Z):\int f\,d\mu=0
                         \text{ for every }\mu\in Q\right\},
\]
\[
   G_Q:=C(X,\mathbb Z)/N_Q,
\]
with the cone
\[
   G_Q^+:=\{0\}\cup
   \left\{[f]_Q:\int f\,d\mu>0\text{ for every }\mu\in Q\right\}
\]
and order unit $[\one_X]_Q$.  
When $Q=\cM_\Gamma(X)$, the evaluation
$\widehat{[f]_H}(\mu)=\int f\,d\mu$ is well defined, and
\[
   \Inf(\Halpha):=\{h\in\Halpha:\widehat h(\mu)=0
                                    \text{ for all }\mu\in Q\}
\]
is its infinitesimal subgroup (\cite[Section 6]{Glasner}).

The following result is
\cite[Proposition~11.1]{Glasner}.
\begin{proposition}\label{prop:known-realization}
For a minimal Cantor action of a countably infinite discrete amenable
group, the affine evaluation map of $\cM_\Gamma(X)$ is good if and only if there is
a minimal homeomorphism $T:X\to X$ such that $\cM_T(X)=\cM_\Gamma(X)$.
\end{proposition}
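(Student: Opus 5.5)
We would prove the two implications separately. For the direction from a minimal homeomorphism to goodness, suppose $T:X\to X$ is minimal with $\cM_T(X)=\cM_\Gamma(X)=:Q$, and let $A,B$ be clopen with $\widehat A<\widehat B$ on $Q$. Then $\mu(A)<\mu(B)$ for every $T$-invariant measure, so the Glasner--Weiss theorem stated in the Introduction gives $g\in[[T]]$ with $g(A)\subseteq B$. Put $D:=g(A)$. Because $g$ agrees with powers of $T$ on a finite clopen partition of $X$, it preserves every $T$-invariant measure. Hence $\widehat D=\widehat A$ on $Q$ and $D\subseteq B$, which is goodness. This direction is routine. (The degenerate cases $A=\varnothing$ or $A=X$ are trivial.)

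For the converse we would go through ordered groups. Assume the affine evaluation map of $Q$ is good. Since the action is minimal on a Cantor space, every $\mu\in Q$ is atomless and has full support, and $Q$ is a metrizable Choquet simplex.

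The first step is to show that $(G_Q,G_Q^+,[\one_X]_Q)$ is a simple dimension group whose normalized state space is affinely homeomorphic to $Q$, and whose scale is $\{[\one_A]_Q:A\in\Clop(X)\}$. Simplicity and unperforation are immediate from the strict-positivity definition of $G_Q^+$. For Riesz interpolation (equivalently, that the image of $G_Q$ in $\Aff(Q)$ is dense with the strict order), we would use goodness to realize strict inequalities between affine functions by clopen subsets. Full support and atomlessness give that the clopen values are dense enough to separate points of $Q$, and this is what identifies the state space with $Q$.

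The second step applies Herman--Putnam--Skau. It produces a Cantor minimal system $(Y,S)$ whose unital ordered $K^0$-group is isomorphic to $G_Q$. Its invariant measures correspond to states, so we obtain an affine homeomorphism $\Phi:\cM_S(Y)\to Q$ compatible with the clopen-value correspondence $[\one_C]\mapsto[\one_A]_Q$. By the first direction, the affine evaluation map of $\cM_S(Y)$ is also good.

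The third step, which we expect to be the main obstacle, is to find a homeomorphism $h:Y\to X$ with $h_*\nu=\Phi(\nu)$ for every $\nu\in\cM_S(Y)$. Once $h$ is found, we set $T:=hSh^{-1}$, which is minimal with $\cM_T(X)=Q$. To construct $h$ we would run a back-and-forth argument in the style of Akin's theorem for good measures.

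1. Enumerate refining clopen partitions of $Y$ and of $X$ with mesh tending to zero.
2. Inductively maintain a bijection between the atoms of a partition of $Y$ and the atoms of a partition of $X$ with equal affine evaluations (transported via $\Phi$).
3. When refining an atom on one side into pieces, the values of those pieces sum to the value of the matched atom on the other side. Since the scale is the full set of clopen values, each such value is realized by some clopen set.
4. Use goodness on that other side, applied to successive strict inequalities, to carve the matched atom into clopen pieces with exactly the prescribed evaluations.

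The genuine difficulty sits in step 4. Goodness only handles strict inequalities, so it gives $D\subseteq B$ with $\widehat D=\widehat A$ only when $\widehat A<\widehat B$. We must therefore split off the pieces one at a time, keeping every remaining piece nonempty: each piece other than the last has value strictly less than the remaining part, because the remaining pieces have positive measure by full support. The last piece is then forced by additivity. Alternating sides and passing to the limit yields the homeomorphism $h$.
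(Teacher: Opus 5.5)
The paper gives no proof of this proposition; it quotes it from \cite[Proposition~11.1]{Glasner}, so your argument has to stand on its own. Your direction ($\Leftarrow$) via Glasner--Weiss is correct. Steps~2--4 of ($\Rightarrow$) are also sound, provided Step~1 is available:
\begin{itemize}
\item Herman--Putnam--Skau realizes the noncyclic simple dimension group $G_Q$ as $K^0(Y,S)$, and the state spaces give $\Phi$.
\item Goodness on $Y$ follows from your first direction.
\item The back-and-forth works as you describe. Each carved piece lies strictly below the remainder by full support, the last piece is forced by additivity, alternating sides sends both meshes to $0$, and agreement on all atoms yields $h_*\nu=\Phi(\nu)$.
\end{itemize}
Contrary to your own assessment, this back-and-forth is the routine part.

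The gap is Step~1. It carries the real content, and your sketch does not establish it. There are three problems.
\begin{itemize}
\item \emph{State space.} Separating points of $Q$ only shows that $Q\to S(G_Q,u_Q)$ is injective. Surjectivity needs its own argument. For instance, separate a state not coming from $Q$ from the image of $Q$ by finitely many elements of $G_Q$, replace the real coefficients by nearby rationals, and clear denominators. This gives $g\in G_Q^+$ on which that state is negative, a contradiction.
\item \emph{Interpolation.} You correctly note that interpolation amounts to density of $\{\widehat f:f\in C(X,\mathbb Z)\}$ in $\Aff(Q)$, but you give no mechanism for that density. Realizing a strict inequality $\widehat A<\widehat B$ by a clopen subset of $B$ produces no clopen values approximating, say, $\tfrac1n\widehat A$ uniformly on $Q$. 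Without such values you cannot find an element of $G_Q$ strictly between $\max(\widehat a_1,\widehat a_2)$ and $\min(\widehat b_1,\widehat b_2)$. This is where the real work, and the Choquet-simplex property of $Q$, must enter.
\item \emph{Clopen scale.} Steps~3--4 need $[0,u_Q]_{G_Q}=\{[\one_A]_Q:A\in\Clop(X)\}$ to realize transported values by clopen subsets of $X$. You assert this without proof, and it is not immediate. For a representative $f=f^+-f^-$, cancelling $f^-$ against $f^+$ is itself an interpolation problem when $\widehat{f^-}$ is not dominated by a single sheet of $f^+$.
\end{itemize}

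All three facts are exactly Proposition~\ref{prop:known-aem-structure}\,(i)--(ii). The paper imports these from \cite[Theorem~4.3 and Proposition~5.10]{Glasner} under precisely your hypotheses: a Cantor space, an atomless full-support Choquet simplex $Q$, and a good evaluation map. If you replace your sketch of Step~1 by that citation, the rest of your argument is a complete proof of ($\Rightarrow$).
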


An ordered abelian group $(D,D^+)$ is a \emph{dimension group} if
$D=D^+-D^+$, it is unperforated, and it has Riesz interpolation.
Here \emph{unperforation} means $nd\geq0\Rightarrow d\geq0$ for $n\geq1$ and
\emph{interpolation} means that $a_i\leq b_j$ for $i,j\in\{1,2\}$ implies
$a_i\leq c\leq b_j$ for some $c\in D$ and all $i,j$.
When $D=D^+-D^+$, an element $v\in D^+$ is an order unit of the
monoid $D^+$ if and only if, for every $d\in D$, there exists
$n\geq1$ such that
\[
   -nv\leq d\leq nv.
\]  Write
\[
   S(D,v):=\{\tau:D\to\mathbb R:\tau\text{ is additive and positive},
                                      \ \tau(v)=1\}
\]
for its normalized state space, with the topology of pointwise
convergence.

We record the additional AEM results used below.  These are
\cite[Theorem~4.3, Proposition~5.10, Corollary~5.11,
and Proposition~3.3]{Glasner}, respectively.
\begin{proposition}\label{prop:known-aem-structure}
Let $X$ be a Cantor space and let $Q$ be a nonempty compact Choquet
simplex of atomless full-support probability measures.  Suppose that
the associated geometric AEM is good, and put $u_Q:=[\one_X]_Q$.
\begin{enumerate}[label=\textup{(\roman*)}]
\item 
$(G_Q,G_Q^+,u_Q)$ is a countable simple dimension group, and
      $\mu\mapsto([f]_Q\mapsto\int f\,d\mu)$ identifies $Q$ affinely
      and homeomorphically with $S(G_Q,u_Q)$.
\item For every clopen $B\subseteq X$,
      \[
        [0,[\one_B]_Q]_{G_Q}
        =\{[\one_C]_Q:C\subseteq B\text{ is clopen}\}.
      \]
\item If
      $
        J_Q:=\left\langle\one_A-\one_B:
                    A,B\in\Clop(X),\ A\sim_Q B\right\rangle,
      $
      then $J_Q=N_Q$.
\item If $A\sim_Q B$, some $h\in\mathcal H_Q$ satisfies $h(A)=B$.
\end{enumerate}
\end{proposition}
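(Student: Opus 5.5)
The statement above collects \cite[Theorem~4.3, Proposition~5.10, Corollary~5.11, Proposition~3.3]{Glasner}, so in the paper it is simply quoted. If I were to reprove it, I would derive everything from goodness together with two features of $Q$: every measure is atomless and has full support, and $Q$ is a Choquet simplex.

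I would start with a realization lemma. If $g\in C(X,\mathbb Z_{\ge0})$ satisfies $\int g\,d\mu<\mu(B)$ for all $\mu\in Q$, then $\widehat{g}=\widehat D$ for some clopen $D\subseteq B$. To prove it, write $g=\sum_j\one_{\{g\ge j\}}$. Goodness lets me move these layers, one at a time, into disjoint clopen subsets of $B$. At each step the remaining room stays strictly positive on $Q$, by compactness of $Q$ and continuity of the functions involved. The case where $\int g\,d\mu=\mu(B)$ for some but not all $\mu$ is handled in the next step.

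Item (ii) follows from this lemma. Let $0\le[g]_Q\le[\one_B]_Q$. If both inequalities are strict, the lemma applies directly. If $[g]_Q=0$ or $[g]_Q=[\one_B]_Q$, take $C=\varnothing$ or $C=B$. Since $G_Q^+$ consists of $0$ and the strictly positive classes, no other cases occur.

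Item (iii) has an easy inclusion, $J_Q\subseteq N_Q$. For the converse, take $f\in N_Q$ and write $f=f_+-f_-$ with $\int f_+\,d\mu=\int f_-\,d\mu$ for every $\mu\in Q$. Using full support, I would choose a small clopen $E$ with $\mu(E)$ uniformly small. Then I would realize $f_+$ and $f_-$ layer by layer as clopen sets with matching affine evaluations, modulo $\one_X$ and $\one_E$ corrections. This expresses $f$ as a sum of differences $\one_A-\one_B$ with $A\sim_Q B$.

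For item (i), $G_Q=G_Q^+-G_Q^+$ because $\one_X$ is an order unit. Unperforation is immediate from the definition of the strict cone. Riesz interpolation would come from the simplex property: strict affine inequalities on a simplex admit a continuous affine interpolant. Realizing that interpolant by a class in $G_Q$ needs density of $\{\widehat C\}$ in $\Aff(Q)^+$. I would obtain this from atomlessness, full support and goodness, arguing as in the Effros--Handelman--Shen setting. The identification of states would then use that $G_Q$ embeds densely in $\Aff(Q)$ with the strict order.

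Item (iv) is where I expect the main difficulty. I would run a back-and-forth construction. Fix refining sequences of clopen partitions $\mathcal P_n$ of $A$ and $\mathcal R_n$ of $B$ whose meshes go to $0$, and similarly on the complements. Inductively, I would match the cells of the partitions on the two sides so that matched cells are $\sim_Q$-equivalent. A refinement on one side would be reproduced on the other using (ii): a sub-cell $P'\subseteq P$, with $P$ matched to $R$, gives $[\one_{P'}]_Q\le[\one_R]_Q$, so some clopen $R'\subseteq R$ satisfies $R'\sim_Q P'$. Alternating sides makes both partition sequences generating. The limit of the matchings is then a homeomorphism $h$ that preserves every $\mu\in Q$ on a generating algebra, hence lies in $\mathcal H_Q$, and it satisfies $h(A)=B$.
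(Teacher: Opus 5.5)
The paper gives no proof of this proposition; it quotes it from \cite{Glasner}. Measured against what an independent proof needs, your argument for (iv) is sound. It is also easier than you expect: if $P\sim_Q R$ and $\varnothing\neq P'\subsetneq P$, full support gives $\widehat{P'}<\widehat R$, so goodness alone yields $R'\subseteq R$ with $R'\sim_Q P'$, and the complements $P\setminus P'$, $R\setminus R'$ are then automatically $\sim_Q$-equivalent.

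The genuine gap is in (i)--(iii), all at the same point: the passage from single-set goodness to multi-sheet statements.

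\textbf{Items (ii) and (iii).} Your realization lemma is proved only for $g\in C(X,\mathbb Z_{\geq0})$. In (ii), however, a class with $0<[g]_Q<[\one_B]_Q$ is given by an arbitrary $g\in C(X,\mathbb Z)$, and you never show that it has a nonnegative representative. This is not cosmetic. Take $g=2\one_U-\one_V$ with $U\cap V=\varnothing$, and suppose $\widehat V<\widehat U$ fails at some $\mu\in Q$ while $\widehat U<\widehat{X\setminus U}$ fails at others.
\begin{enumerate}[label=\textup{(\alph*)}]
\item You cannot copy $V$ into $U$.
\item You cannot realize $2\one_U$ as $U$ together with a disjoint copy of $U$.
\end{enumerate}
The natural route then needs a clopen $V_1\subseteq V$ with $\widehat V-\widehat U<\widehat{V_1}<\widehat U$ on all of $Q$. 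That is an approximate halving of $V$, uniform in $\mu$. Goodness only transports evaluations that some clopen set already has; it does not produce new ones such as approximately $\widehat V/2$. The same Riesz-refinement step is hidden in (iii) behind ``layer by layer \dots\ modulo $\one_X$ and $\one_E$ corrections''. Writing $f\in N_Q$ as a combination of $\one_A-\one_B$ with $A\sim_Q B$ amounts to refining $\sum_i\widehat{U_i}=\sum_j\widehat{V_j}$ into matched clopen pieces.

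\textbf{Item (i).} You name the missing ingredient, density of clopen evaluations in $\Aff(Q)$, but give no argument for it. It is the heart of the proposition: a noncyclic simple dimension group has dense image in the affine functions on its state space, so (i) forces this density. Atomlessness, full support and compactness of $Q$ do give clopen partitions into pieces that are uniformly small on $Q$. But choosing pieces whose union approximates $\widehat A/n$ simultaneously for all $\mu\in Q$ is a discrepancy problem over an infinite family. For finite-dimensional $Q$ a closed-subgroup argument in $\mathbb R^k$ settles it, but that argument does not survive in infinite dimensions, and Effros--Handelman--Shen does not supply it.

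What must be derived from the hypotheses is something beyond single-set goodness. One such statement is a multi-sheet form of goodness: for $f,g\in C(X,\mathbb Z_{\geq0})$ with $\widehat f<\widehat g$ on $Q$, there is $0\le h\le g$ with $\widehat h=\widehat f$. With it, the rest goes through:
\begin{enumerate}[label=\textup{(\alph*)}]
\item strictly positive classes get nonnegative representatives, and your realization lemma then gives (ii);
\item the truncation $h_1=\min(h,g_1)$ gives Riesz decomposition, hence interpolation, in $G_Q$;
\item refinement together with (ii) gives (iii).
\end{enumerate}
In the paper's own dynamical setting, this multi-sheet input is exactly what Theorem~\ref{thm:main-comparison}\,(i) provides for types, and Lemma~\ref{lem:coinvariant-order-scale} uses it to obtain the analogue of (ii) for $H(\alpha)$.
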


\subsection{Transformation groupoids and bisections}
\label{subsec:transformation-groupoids}

We use the standard groupoid language of
\cite[Chapter~I, Section~1]{Renault1980}.
A \emph{groupoid} $\cG$ consists of invertible arrows between 
\emph{units} in
$\cG^{(0)}$.  An arrow $\gamma$ goes from $s(\gamma)$ to $r(\gamma)$,
and $\gamma\eta$ is defined when $s(\gamma)=r(\eta)$ and means first
$\eta$, then $\gamma$.  Multiplication is associative on composable
triples, each unit $z$ has an identity arrow $1_z$, and
$\gamma^{-1}\gamma=1_{s(\gamma)}$,
$\gamma\gamma^{-1}=1_{r(\gamma)}$.
In a topological groupoid these operations and the source and range
maps are continuous.  A group $K$ is viewed as a groupoid with a single unit $*$, with
arrow space $K$, source and range maps
\[
s(k)=r(k)=*, \qquad k\in K,
\]
and with groupoid multiplication and inversion given by the group
multiplication and inversion in $K$.  Thus every pair of arrows is
composable.

For an action $H\acts Z$ 
of a countable discrete group $H$
on a compact space $Z$, define
$H\ltimes Z=H\times Z$, with the product topology, by
\[
   (h,z):z\to hz,\qquad
   s(h,z)=z,\quad r(h,z)=hz,
\]
and
\[
   (g,hz)(h,z)=(gh,z),\qquad
   (h,z)^{-1}=(h^{-1},hz).
\]
The unit space is identified with $Z$ via $z\mapsto(1_H,z)$.

A groupoid is \emph{\'etale} if its source map is a local
homeomorphism, and inversion then gives the same property for the range
map. See, for example, \cite{Renault1980}. Here $s$ restricts to a homeomorphism on each open slice
$\{h\}\times Z$, so $H\ltimes Z$ is \'etale.

For $Y\subseteq Z$, its \emph{reduction} is
\[
   (H\ltimes Z)|_Y=\{(h,z):z\in Y,\ hz\in Y\}.
\]
No $H$-invariance of $Y$ is required.  When $Y$ is clopen the
reduction is again a locally compact \'etale groupoid.

A \emph{bisection} is a set of arrows on which both $s$ and $r$ are
injective.  An open bisection $B$ in an \'etale groupoid induces a
partial homeomorphism
\[
   \theta_B=r\circ(s|_B)^{-1}:s(B)\to r(B).
\]
An arbitrary nonopen bisection need not have this topological property.
A bisection is \emph{full over $Y$} if $s(B)=r(B)=Y$.
For bisections $B$ and $D$, multiplication and inverse mean
\[
   BD=\{\gamma\eta:\gamma\in B,\ \eta\in D,
                      \ s(\gamma)=r(\eta)\},\qquad
   B^{-1}=\{\gamma^{-1}:\gamma\in B\}.
\]
Then $BD$ induces the partial homeomorphism
$
\theta_{BD}=\theta_B\circ\theta_D
$
on the domain
$
\{x\in s(D):\theta_D(x)\in s(B)\}.
$  

Let $Y\subseteq X$ be clopen. Each compact-open full bisection $B$ induces a homeomorphism
$\theta_B:Y\to Y$.
The compact-open full bisections of $(H\ltimes X)|_Y$ form a group $\operatorname{Bis}_c((H\ltimes X)|_Y)$
under bisection multiplication. Its identity is
$
   1_Y=\{(1_H,y):y\in Y\},
$
the bisection consisting of all unit arrows over $Y$.

For compact $Z$, a compact-open bisection $B$ of $H\ltimes Z$ meets only
finitely many slices, so it has the form
\[
   B=\bigsqcup_{i=1}^m(\{h_i\}\times D_i),
\]
where the $D_i$ are clopen and pairwise disjoint and the sets $h_iD_i$
are pairwise disjoint.  Conversely, such data define a compact-open
bisection.  The induced map is $\theta_B(z)=h_i z$ on $D_i$.
Note that if $h\neq1_H$ fixes $z$, the arrows $(h,z)$ and
$(1_H,z)$ are different although their endpoints agree.
Consequently, $B^n=1_Y$ requires the product of the labels along
$n$ successive steps to be $1_H$.

\subsection{Topological amenability and cocycles}\label{subsec:amenable}

For a countably infinite discrete group $H$, let
\[
   \ell^1(H):=\left\{m:H\to\mathbb R:
                \|m\|_1:=\sum_{g\in H}|m(g)|<\infty\right\},
\]
and
\[
   \Prob(H):=\{m\in\ell^1(H):m\geq0,\ \|m\|_1=1\}.
\]

Left translation is $(hm)(g):=m(h^{-1}g)$.
A compact action $H\acts Z$ is \emph{topologically amenable} if for
every finite $A\subseteq H$ and $\varepsilon>0$ there is a
norm-continuous map $z\mapsto m_z\in\Prob(H)$ with
\[
   \sup_{z\in Z}\|m_{hz}-hm_z\|_1<\varepsilon
   \qquad(h\in A).
\]
Topological amenability here is a property of the action, so the group
$H$ itself need not be amenable. For further properties of topologically amenable actions, see
\cite{AnantharamanDelaroche2002}.

A continuous map $c:H\times Z\to K$ is a \emph{cocycle} if
\[
   c(gh,z)=c(g,hz)c(h,z).
\]
Equivalently, regarding $K$ as a one-unit groupoid, $c$ induces
a continuous groupoid homomorphism
\[
   \widetilde c:H\ltimes Z\to K,
   \qquad
   \widetilde c(h,z)=c(h,z).
\]
We use the standard equivalence between topological amenability of
$H\curvearrowright Z$ and amenability of its transformation groupoid
$H\ltimes Z$; see \cite{AnantharamanDelarocheRenault}.

\begin{lemma}\label{lem:cocycle-criterion}
Let a countable discrete group $H$ act on a compact metrizable space
$Z$, and let $K$ be a countable discrete amenable group.
If a continuous cocycle $c:H\times Z\to K$ satisfies
\begin{equation}\label{eq:hyper}
     c(h,z)=1_K\quad\Longrightarrow\quad h=1_H,
\end{equation}
  then $H\curvearrowright Z$ is topologically amenable.
\end{lemma}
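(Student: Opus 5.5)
We prove Lemma~\ref{lem:cocycle-criterion} by pulling back approximately invariant means on $K$ along the cocycle. Since $K$ is amenable, its left translation action on the one-point space is amenable: for every finite $E\subseteq K$ and $\varepsilon>0$ there is $\nu\in\Prob(K)$ with $\|k\nu-\nu\|_1<\varepsilon$ for $k\in E$. By Reiter's condition we may take $\nu$ finitely supported. The plan is to build, from $\nu$, a continuous family $z\mapsto m_z\in\Prob(H)$ that is approximately equivariant, using the injectivity condition \eqref{eq:hyper} to transport mass from $K$ to $H$.

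We cannot simply push mass from $K$ to $H$, since $h\mapsto c(h,z)$ need not be injective or surjective. We therefore work at the groupoid level. The cocycle gives a continuous groupoid homomorphism $\widetilde c:H\ltimes Z\to K$. Condition \eqref{eq:hyper} says exactly that $\ker\widetilde c$ is contained in the unit space. So the homomorphism has trivial kernel, and its fibres are injective on range fibres: for fixed $z$, the map $h\mapsto c(h^{-1},hz)^{-1}$, or equivalently $h\mapsto c(h,z)$, is injective. Indeed, $c(h,z)=c(h',z)$ gives $c(h'^{-1}h,z)=c(h'^{-1},h'z)c(h,z)=c(h'^{-1},h'z)c(h',z)=c(1,z)=1_K$, hence $h=h'$.

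With injectivity in hand, define
\[
   m_z(h):=\nu\bigl(c(h,z)^{-1}\bigr)
\]
or a suitable variant with inverses, chosen so that the bookkeeping works. The sum $\sum_h m_z(h)\le 1$ follows from injectivity, but equality fails when $c(\cdot,z)$ is not onto. To fix this, we use the standard fact (Anantharaman-Delaroche--Renault) that a groupoid admitting a continuous homomorphism with amenable-kernel fibres into an amenable group is amenable. Here the kernel is trivial, and hence amenable. The cleanest route is therefore to cite \cite[Proposition~5.1.? / Theorem~5.2.?]{AnantharamanDelarocheRenault}: the extension of an amenable group by a groupoid with amenable kernel gives amenability of $H\ltimes Z$. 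Then the stated equivalence gives topological amenability of $H\curvearrowright Z$.

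If a self-contained argument is preferred, the main obstacle is the normalisation $\sum_h m_z(h)=1$ together with continuity in $z$. I would try to handle both as follows. Continuity of $c$ and compactness of $Z$ give, for each $h$ in a finite set, a clopen-type local constancy: $K$ is discrete, so $c(h,\cdot)$ is locally constant. The set $\{h: c(h,z)\in\operatorname{supp}\nu\cdot E\}$ is then finite and varies upper-semicontinuously, so the images $c(H,z)$ can be compared locally. Since a direct construction is delicate, the citation route via amenability of the groupoid homomorphism $\widetilde c$ with trivial kernel is the approach I would commit to. The remaining checks are the injectivity computation above and verifying the hypotheses of the cited result (continuity of $\widetilde c$, and discreteness and amenability of $K$).
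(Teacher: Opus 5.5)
Your committed route is the paper's: the hypothesis $c(h,z)=1_K\Rightarrow h=1_H$ gives $\ker\widetilde c=\mathcal G^{(0)}$, which is amenable, so amenability of $K$ yields amenability of the second-countable \'etale groupoid $H\ltimes Z$, and the groupoid/action equivalence then gives topological amenability of $H\curvearrowright Z$. The precise topological statement to cite is Renault--Williams \cite[Corollary~4.5]{RenaultWilliams}, not an unspecified result of \cite{AnantharamanDelarocheRenault}; also, your (unused) injectivity aside applies the cocycle identity at the wrong point, since $c(h'^{-1}h,z)=c(h'^{-1},hz)\,c(h,z)$, and the correct check is $c(hh'^{-1},h'z)=c(h,z)\,c(h',z)^{-1}$.
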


\begin{proof}
Consider the transformation groupoid
$\mathcal G=H\ltimes Z$ and the homomorphism
$\widetilde c:\mathcal G\to K$ induced by $c$.
Since $H$ is countable discrete and $Z$ is compact metrizable,
$\mathcal G$ is a second-countable locally compact Hausdorff
\'etale groupoid, and hence has the canonical counting Haar system.
By \eqref{eq:hyper},
\[
   \ker\widetilde c
   =
   \{(1_H,z):z\in Z\}
   =
   \mathcal G^{(0)}.
\]
Thus the kernel is the unit groupoid and is amenable.
Since $K$ is amenable, Renault--Williams
\cite[Corollary~4.5]{RenaultWilliams} implies that
$\mathcal G$ is amenable.  The transformation-groupoid equivalence
then gives topological amenability of $H\curvearrowright Z$.
\end{proof}

\subsection{Graphs and matchings}\label{subsec:graph-prelim}

A bipartite multigraph consists of two disjoint vertex sets $L$ and
$R$, an edge set $E$, and endpoint maps
\[
   s:E\to L,
   \qquad
   r:E\to R.
\]
Distinct edges are allowed to have the same pair of endpoints.

Suppose now that $L$ and $R$ are compact zero-dimensional spaces.
We say that the multigraph $(L,R,E)$ admits a
\emph{finite clopen presentation} if there are a finite index set
$I$, nonempty clopen sets
$
   D_i\subseteq L
$, $i\in I$
and homeomorphisms
$
   \phi_i:D_i\to U_i
$
onto clopen sets $U_i\subseteq R$, together with a bijection
$
   \Psi:
   \bigsqcup_{i\in I}(\{i\}\times D_i)
   \longrightarrow E
$
such that, for every $i\in I$ and $x\in D_i$,
\[
   s(\Psi(i,x))=x,
   \qquad
   r(\Psi(i,x))=\phi_i(x).
\]
We equip $E$ with the topology transported by $\Psi$. Equivalently,
$\Psi$ is a homeomorphism from the displayed disjoint union onto
the edge space $E$.  We henceforth identify these two spaces through
$\Psi$.

A \emph{matching} is an edge subset $E_M\subseteq E$ such that
$s|_{E_M}$ and $r|_{E_M}$ are injective.  It is a \emph{clopen
matching} if $E_M$ is clopen.
Set $$A_i:=\{x\in D_i:(i,x)\in E_M\}.$$  For a clopen matching the
$A_i$ are pairwise disjoint clopen sets, as are the $\phi_i(A_i)$,
and
\[
   E_M=\bigsqcup_i\{i\}\times A_i,\qquad
   M:\bigsqcup_i A_i\to\bigsqcup_i\phi_i(A_i),
   \quad M|_{A_i}=\phi_i|_{A_i}.
\]
Thus $M=r\circ(s|_{E_M})^{-1}$ is a clopen partial homeomorphism.
 The matched left and right vertex sets are
$\dom M=s(E_M)$ and $\ran M=r(E_M)$.  The matching is
\emph{left-perfect} if $\dom M=L$, \emph{right-perfect} if
$\ran M=R$, and \emph{perfect} if both hold.

A \emph{walk} is a finite sequence of incident vertices and specified
edges.  Its length is the number of edges.  A \emph{path} is a walk
with no repeated vertex.

An \emph{augmenting path} for $E_M$ is a path of odd length
$2k+1$ with vertices
\[
   x_0,y_0,x_1,y_1,\ldots,x_k,y_k,
   \qquad x_i\in L,\ y_i\in R,
\]
such that
\[
   x_0\notin\dom M,
   \qquad
   y_k\notin\ran M,
\]
and whose specified edges alternate between edges outside and inside
the matching.  More precisely, there are edges
$
   e_i\notin E_M
$, $0\leq i\leq k$
with
$
   s(e_i)=x_i,
$ $
   r(e_i)=y_i,
$
and edges
$
   m_i\in E_M
 $, $1\leq i\leq k
$
with
$
   s(m_i)=x_i,
$ $   r(m_i)=y_{i-1}.
$
Thus an augmenting path begins and ends at unmatched vertices and
alternates between nonmatching edges from $L$ to $R$ and matching
edges from $R$ back to $L$, as illustrated in
Figure~\ref{fig:augmenting-path}.
\begin{figure}[ht]
\centering
\begin{tikzpicture}[
   >=Stealth,
   every node/.style={font=\small}
]
   \node (x0) at (0,0) {$x_0$};
   \node (y0) at (4,0) {$y_0$};

   \node (x1) at (0,-1.4) {$x_1$};
   \node (y1) at (4,-1.4) {$y_1$};

   \node at (0,-2.6) {$\vdots$};
   \node at (4,-2.6) {$\vdots$};

   \node (xk) at (0,-3.9) {$x_k$};
   \node (yk) at (4,-3.9) {$y_k$};

   \node at (0,0.65) {$L$};
   \node at (4,0.65) {$R$};

   \draw[->] (x0) -- node[above] {$e_0\notin E_M$} (y0);
   \draw[->] (y0) -- node[above,sloped] {$m_1\in E_M$} (x1);
   \draw[->] (x1) -- node[above] {$e_1\notin E_M$} (y1);

   \draw[->] (xk) -- node[above] {$e_k\notin E_M$} (yk);
\end{tikzpicture}
\caption{An augmenting path.} {\small\emph{Note.} Arrows indicate the direction of traversal,
rather than the source-to-range orientation of the underlying edges.}
\label{fig:augmenting-path}
\end{figure}

\emph{Augmenting} or \emph{flipping} along it replaces
$\{m_1,\ldots,m_k\}$ by $\{e_0,\ldots,e_k\}$ in $E_M$.
Internal vertices remain matched, although their partners change,
while both endpoints become matched.  Vertex-disjoint paths can be flipped simultaneously.

We also use the following conventions for multigraphs.  A \emph{cycle} may
have length one, in which case it is a \emph{loop}, or length two, in
which case it consists of two distinct \emph{parallel edges}.  A loop
contributes two incidences to the degree of its vertex.  A \emph{tree}
is a connected multigraph containing no cycles in this sense.
\subsection{The free product $C_3 * C_2$}\label{subsec:free-product}

For the matching arguments we fix the \emph{abstract} groups
\[
   P:=C_3=\{1,a,a^2\},\qquad Q:=C_2=\{1,b\}\]
and
   \[\Pi:=P*Q=\langle a,b\mid a^3=b^2=1\rangle.
\]
We use the reduced-word model of the free product $\Pi$: a nonidentity
word is a finite nonempty sequence of nonidentity elements of $P$
and $Q$, with adjacent elements from different factors.  Multiplication
is concatenation followed by multiplication within factors and deletion
of identities until the word is reduced.  Every element has one such
reduced form.  Thus permutations $p^3=q^2=\id$ give a $\Pi$-action
by $a\mapsto p$, $b\mapsto q$; this action need not be faithful.
The groups $P,Q$ always mean these abstract cyclic factors.  In particular, freeness of the
$P$-action means three-point orbits, even when an image permutation
might otherwise have order one.

\subsection{Almost finiteness and crossed products}
\label{subsec:almost-finite-prelim}
For subsets $A,B\subseteq X$, write
$
   A\prec B
$
if every closed set $C\subseteq A$ admits a finite open cover
$U_1,\ldots,U_m$ and elements
$\gamma_1,\ldots,\gamma_m\in\Gamma$ such that the sets
$
   \gamma_iU_i,$ $1\le i\le m,
$
are pairwise disjoint subsets of $B$.
The action has \emph{open-set comparison} if
\[
   \mu(A)<\mu(B)
   \qquad
   \text{for every }\mu\in \cM_\Gamma(X)
\]
implies $A\prec B$ for all nonempty open sets $A,B\subseteq X$.

 In the
zero-dimensional setting the comparison and open-set comparison are equivalent
\cite[Proposition~3.6]{Kerr}.

\begin{lemma}\label{lem:clopen-open-comparison}
For an action of a countably infinite discrete group on a compact
metrizable zero-dimensional space, clopen dynamical comparison is
equivalent to open-set comparison.
\end{lemma}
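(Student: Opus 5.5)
We prove the two implications separately. Both reductions are routine, and the only real point is a compactness argument on $\cM_\Gamma(X)$ in the passage from clopen to open sets. Throughout, $X$ is compact metrizable and zero-dimensional, so it has a countable basis of clopen sets. In particular every open set is an increasing union of clopen sets, and every closed set is a decreasing intersection of clopen sets.

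\emph{Open-set comparison $\Rightarrow$ clopen comparison.}
Let $A,B$ be nonempty clopen sets with $\mu(A)<\mu(B)$ for all $\mu\in\cM_\Gamma(X)$.
\begin{enumerate}[label=\textup{(\arabic*)}]
\item Since $A$ is open, open-set comparison gives $A\prec B$.
\item Since $A$ is also closed, we may take $C=A$ in the definition of $\prec$. This yields open sets $U_1,\dots,U_m$ covering $A$ and elements $\gamma_i\in\Gamma$ such that the sets $\gamma_iU_i$ are pairwise disjoint subsets of $B$.
\item By compactness and zero-dimensionality, cover $A$ by finitely many clopen sets, each contained in some $A\cap U_i$.
\item Disjointify these clopen sets and group the pieces by the index $i$. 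This gives a clopen partition $A=\bigsqcup_i A_i$ with $A_i\subseteq U_i$.
\item Then the sets $\gamma_iA_i\subseteq\gamma_iU_i$ are pairwise disjoint subsets of $B$, so $A\precsimG B$.
\end{enumerate}

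\emph{Clopen comparison $\Rightarrow$ open-set comparison.}
Let $A,B$ be nonempty open sets with $\mu(A)<\mu(B)$ for all invariant $\mu$, and let $C\subseteq A$ be closed. If $C=\varnothing$ or $\cM_\Gamma(X)=\varnothing$, handle the situation directly: either the conclusion is vacuous, or clopen comparison is applied verbatim. Otherwise proceed as follows.
\begin{enumerate}[label=\textup{(\arabic*)}]
\item Choose clopen sets $C_n$ with $A\supseteq C_1\supseteq C_2\supseteq\cdots$ and $\bigcap_n C_n=C$. This uses compactness of $C$ and a clopen basis inside the open set $A$.
\item Choose clopen sets $\varnothing\neq B_1\subseteq B_2\subseteq\cdots$ with $\bigcup_n B_n=B$.
\item Put $f_n(\mu):=\mu(B_n)-\mu(C_n)$. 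Each $f_n$ is weak$^*$ continuous on $\cM_\Gamma(X)$, because indicators of clopen sets are continuous functions.
\item The sequence $(f_n)$ is nondecreasing in $n$.
\item By countable additivity and regularity, $f_n(\mu)\to\mu(B)-\mu(C)$. This limit is at least $\mu(B)-\mu(A)>0$.
\item Hence the open sets $\{\mu:f_n(\mu)>0\}$ increase and cover the compact space $\cM_\Gamma(X)$. Therefore a single index $n$ satisfies $\mu(C_n)<\mu(B_n)$ for every invariant $\mu$.
\item Apply clopen comparison to the nonempty clopen sets $C_n$ and $B_n$. This gives a clopen partition $C_n=\bigsqcup_t A_t$ and elements $\gamma_t$ such that the sets $\gamma_tA_t$ are pairwise disjoint subsets of $B_n\subseteq B$.
\item The clopen, hence open, sets $A_t$ cover $C$. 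This witnesses $A\prec B$.
\end{enumerate}

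The only step needing care is step (6), the uniformity over all invariant measures. It rests on the monotone approximation by continuous functions $f_n$ whose limit is strictly positive, combined with compactness of $\cM_\Gamma(X)$.
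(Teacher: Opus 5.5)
Your proof is correct and complete. The paper gives no argument of its own for this lemma; it simply cites \cite[Proposition~3.6]{Kerr}, so your proposal replaces the citation with a self-contained proof.

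\textbf{Open-set to clopen.} Refining the open cover of the compact clopen set $A$ to a clopen partition subordinate to it is exactly right.

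\textbf{Clopen to open-set.} You correctly identify the only nontrivial point. One needs a \emph{single} pair of clopen approximants $C_n\supseteq C$ and $B_n\subseteq B$ with $\mu(C_n)<\mu(B_n)$ uniformly in $\mu$. Your nondecreasing sequence of weak$^*$-continuous functions $f_n$ on the compact space $\cM_\Gamma(X)$, with pointwise positive limit, is a Dini-type argument that delivers precisely this.

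\textbf{What each route buys.} The citation buys brevity. Your version makes two things visible. First, only compactness of $\cM_\Gamma(X)$ and continuity of measures along monotone sequences are used. Second, no amenability is needed. This matters because the lemma is stated for arbitrary countably infinite groups, where $\cM_\Gamma(X)$ may be empty.

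\textbf{Two small remarks.} Regularity plays no role in step (5); continuity of $\mu$ along increasing and decreasing sequences suffices. The case $\cM_\Gamma(X)=\varnothing$ needs no separate treatment, since step (6) is then vacuous and the main argument runs unchanged with any $n$.
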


Following \cite[Definition~6.1]{Lindenstrauss1999},  the action has the \emph{small boundary property} (SBP) if
every point has arbitrarily small open neighborhoods $U$ such that
\[
   \mu(\partial U)=0
   \qquad
   \text{for every }\mu\in \cM_\Gamma(X).
\]
  In particular, every action on a
zero-dimensional compact space has SBP, since
the space has a clopen basis.

For a free action, a \emph{tower} is a pair $(V,S)$, where
$V\subseteq X$ and $S\subseteq\Gamma$ is finite and nonempty, such
that the sets
$
   sV,$ $s\in S,
$
are pairwise disjoint.  The set $V$ is the \emph{base}, $S$ is the
\emph{shape}, and the sets $sV$ are the \emph{levels}.  A
\emph{castle} is a finite collection of towers whose levels are
pairwise disjoint.

Given a finite set $K\subseteq\Gamma$ and $\delta>0$, a nonempty
finite set $S\subseteq\Gamma$ is \emph{$(K,\delta)$-invariant} if
$
   |KS\triangle S|<\delta |S|.
$
Following Kerr \cite[Definition~8.2]{Kerr}, a free action is \emph{almost finite} if for every $n\in\mathbb N$,
every finite $K\subseteq\Gamma$ containing $1_\Gamma$, and every
$\delta>0$, there exist an open castle
$
   \{(V_i,S_i)\}_{i\in I}
$
and subsets $S_i'\subseteq S_i$ such that
\begin{enumerate}[label=\textup{(\roman*)}]
\item each $S_i$ is $(K,\delta)$-invariant;
\item every level $sV_i$ has diameter less than $\delta$;
\item $|S_i'|<|S_i|/n$;
\item
$
   X\setminus\bigsqcup_{i\in I}S_iV_i
   \ \prec\
   \bigsqcup_{i\in I}S_i'V_i.
$
\end{enumerate}
\begin{theorem}[{\cite[Theorem~10.2]{Kerr}}]\label{thm:kerr}
    For free actions on zero-dimensional compact metrizable spaces,
almost finiteness is equivalent to the following condition: for every
finite set $K\subseteq\Gamma$ containing $1_\Gamma$ and every $\delta>0$, there is a
clopen castle whose shapes are $(K,\delta)$-invariant and whose
levels partition $X$.
\end{theorem}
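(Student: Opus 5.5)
The reverse implication is immediate. The forward implication absorbs the open remainder of an almost-finite castle into its towers, at the cost of a small loss of invariance.

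\emph{($\Leftarrow$).} Fix $n$, a finite $K\ni 1_\Gamma$, and $\delta>0$, and take a clopen castle $\{(V_i,S_i)\}$ with $(K,\delta)$-invariant shapes whose levels partition $X$. There are finitely many towers, each shape is finite, and translations are uniformly continuous. So we can partition each base $V_i$ into finitely many clopen pieces of small diameter such that every level $sV$ of the refined castle has diameter $<\delta$. The shapes are unchanged and the levels still partition $X$. Put $S_i'=\varnothing$, so that $|S_i'|=0<|S_i|/n$. Condition (iv) holds trivially because its left-hand side is empty, and clopen sets are open. Hence the action is almost finite.

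\emph{($\Rightarrow$).} Given $K\ni 1_\Gamma$ and $\delta>0$, apply almost finiteness with parameters $K$, $\delta/2$ and some $n$ to be chosen large, say $n>4|K|/\delta$. This gives an open castle $\{(V_i,S_i)\}$ with $|S_i'|<|S_i|/n$. The set $C:=X\setminus\bigsqcup_i S_iV_i$ is closed, so (iv) gives a finite open cover of $C$ and elements $\gamma_j$ whose images $\gamma_jU_j$ are pairwise disjoint subsets of $\bigsqcup_i S_i'V_i$.

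We first make everything clopen. By zero-dimensionality and compactness of $C$, shrink the $U_j$ to clopen sets $W_j\supseteq$ (pieces of) $C$ with $\bigcup_j W_j\supseteq C$, still with disjoint images. Refine the $W_j$ so that each $\gamma_jW_j$ lies in a single level $sV_i$ with $s\in S_i'$, and replace the $W_j$ by pairwise disjoint clopen sets. The compact set
\[
\Bigl(X\setminus\textstyle\bigcup_jW_j\Bigr)\cup\textstyle\bigcup_j\gamma_jW_j
\]
is contained in $\bigsqcup_iS_iV_i$. Hence there are clopen $U_i\subseteq V_i$ with $\bigsqcup_iS_iU_i$ containing it. Replace each $W_j$ by $W_j\setminus\bigsqcup_iS_iU_i$, which is still clopen. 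Now $X$ is the disjoint union of the clopen levels $S_iU_i$ and the leftover sets $W_j$, and each $\gamma_jW_j$ lies in some level $sU_i$ with $s\in S_i'$.

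Next we absorb the leftover into the towers. Partition each $U_i$ into finitely many clopen pieces $U_{i,\omega}$ according to the pattern $\omega$ recording, for each $s\in S_i'$, which index $j$ (if any) receives $sx$, that is, with $sx\in\gamma_jW_j$. On a piece $U_{i,\omega}$, each pair $(s,j)$ in $\omega$ yields the new group element $t=\gamma_j^{-1}s$, and the set $\gamma_j^{-1}sU_{i,\omega}$ is contained in $W_j$. Define the new shape $\widetilde S_{i,\omega}=S_i\cup T_{i,\omega}$, where $T_{i,\omega}$ is the set of these elements. The new levels are pairwise disjoint because the $\gamma_jW_j$ are disjoint, and by freeness distinct group elements give distinct levels over a common base point. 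After a further clopen refinement by where points land, these levels cover every $W_j$, so the new clopen castle partitions $X$.

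The main point to check is invariance. We have $|T_{i,\omega}|\le|S_i'|<|S_i|/n$, and hence
\[
|K\widetilde S\,\triangle\,\widetilde S|\le|KS\triangle S|+2|K||T|<(\delta/2+2|K|/n)|S|<\delta|\widetilde S|.
\]
The delicate step is the clopen shrinking above. The shrinking must be done in the stated order: first fix the clopen $W_j$ together with their images, and only then choose the clopen $U_i$. Otherwise the domination of the remainder could be lost.
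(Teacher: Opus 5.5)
The paper gives no proof of this statement and simply quotes \cite[Theorem~10.2]{Kerr}. Your argument is a correct proof along essentially Kerr's lines. For ($\Leftarrow$) you refine the bases. For ($\Rightarrow$) you make the comparison witnesses clopen first, then choose clopen $U_i\subseteq V_i$ so that $\bigsqcup_iS_iU_i$ contains both $X\setminus\bigcup_jW_j$ and all $\gamma_jW_j$, and finally graft the elements $\gamma_j^{-1}s$ onto the shapes at an invariance cost of at most $2|K||S_i'|<\tfrac{\delta}{2}|S_i|$.

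Two points of wording should be tightened. First, disjointness of the new levels comes from disjointness of the $W_j$ and of the old levels (if $\gamma_j^{-1}sx=\gamma_j^{-1}s'y$ then $sx=s'y$), not from freeness. Second, no further refinement is needed for coverage: each $w\in W_j$ satisfies $\gamma_jw=sx$ with $x\in U_i$, so $w=(\gamma_j^{-1}s)x$ already lies in a new level.
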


\vspace{.3cm}

In the next theorem,
$C(X)\rtimes_r\Gamma$ denotes the reduced crossed product for the induced action \[ (\gamma\cdot f)(x):=f(\gamma^{-1}x) \qquad (f\in C(X),\ \gamma\in\Gamma), \] and $\mathcal Z$ denotes the Jiang--Su algebra. 
Thus, $\mathcal Z$-stability means \[ \bigl(C(X)\rtimes_r\Gamma\bigr)\otimes_{\min}\mathcal Z \cong C(X)\rtimes_r\Gamma. \] For background on reduced crossed products and $\mathcal Z$-stability, see, for example, \cite{BrownOzawa}.

\begin{theorem}\label{thm:known-almost-finite}
We will use  the following two consequences:
\begin{enumerate}[label=\textup{(\roman*)}]
\item
Let a countably infinite discrete amenable group act freely on a
compact metrizable space.  Then the action is almost finite if and only
if it has SBP and open-set comparison
\cite[Theorem~6.1]{KerrSzabo}.

\item
Let a countably infinite discrete group act freely, minimally, and
almost finitely on a compact metrizable space.  Then
$
   C(X)\rtimes_r\Gamma
$
is $\mathcal Z$-stable
\cite[Theorem~12.4]{Kerr}.
\end{enumerate}
\end{theorem}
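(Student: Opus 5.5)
Both items of Theorem~\ref{thm:known-almost-finite} are imported results, so the proof consists of matching hypotheses and definitions, not a new argument. For each item I would state the cited theorem, then check that every notion used in this paper (freeness, open-set comparison, SBP, almost finiteness, reduced crossed product) agrees with the notion in the source.

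For \textup{(i)}, the plan is to quote \cite[Theorem~6.1]{KerrSzabo}. There, for a free action of a countably infinite amenable group on a compact metrizable space, almost finiteness is shown equivalent to comparison together with the small boundary property. The points to verify are the following.
\begin{itemize}
\item Kerr--Szab\'o's ``comparison'' is the open-set relation $A\prec B$ under the hypothesis $\mu(A)<\mu(B)$ for all $\mu\in\cM_\Gamma(X)$. This is exactly the open-set comparison of Subsection~\ref{subsec:almost-finite-prelim}, which in turn follows \cite{Kerr}.
\item Their small boundary property is Lindenstrauss's, as recalled above.
\item The definition of almost finiteness used here is \cite[Definition~8.2]{Kerr}, which is the one Kerr--Szab\'o use.
\end{itemize}
Two small consistency points need attention. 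The first is that the invariant-measure set is taken over regular Borel probability measures; on a metrizable space every Borel probability measure is regular, so this costs nothing. The second is that Kerr's definition requires $1_\Gamma\in K$ and $(K,\delta)$-invariance in the form $|KS\triangle S|<\delta|S|$, which is the form reproduced above. With these checked, \textup{(i)} is a restatement.

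For \textup{(ii)}, I would invoke \cite[Theorem~12.4]{Kerr}: if a free minimal action of a countably infinite group on a compact metrizable space is almost finite, then $C(X)\rtimes_r\Gamma$ is $\mathcal Z$-stable. The step I expect to need the most care is whether Kerr states this theorem with an amenability hypothesis on $\Gamma$, whereas our statement of \textup{(ii)} omits it.
\begin{itemize}
\item If Kerr assumes amenability, I would first show that almost finiteness of an action on a nonempty space forces amenability of $\Gamma$. The definition supplies, for each finite $K\ni 1_\Gamma$ and each $\delta>0$, a nonempty $(K,\delta)$-invariant finite set $S_i$. Taking an exhaustion $K_1\subseteq K_2\subseteq\cdots$ of $\Gamma$ with $\delta_n\to0$ then yields a F\o lner sequence. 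Note that the castle must be nonempty, since otherwise condition \textup{(iv)} of the definition would fail for $X\neq\varnothing$.
\item For the same reason, the ``countably infinite'' and ``nonempty'' hypotheses already make the crossed product unital, separable and simple. Simplicity uses freeness and minimality, so it is consistent with Kerr's setting.
\end{itemize}
Once these compatibility checks are done, both items follow directly from the citations. No new argument is required beyond this bookkeeping.
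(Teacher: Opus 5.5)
Your proposal agrees with the paper, which gives no argument beyond the citations \cite[Theorem~6.1]{KerrSzabo} and \cite[Theorem~12.4]{Kerr}. Your definitional checks are sound, including the point that a nonempty space forces a nonempty castle, so the $(K,\delta)$-invariant shapes make $\Gamma$ amenable; the only blemish is the unneeded crossed-product bullet, since unitality, separability and simplicity come from compactness, from metrizability with countability of $\Gamma$, and from freeness with minimality, not ``for the same reason'' as amenability.
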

\subsection{Topologically small boundaries and zero-dimensional extensions}
\label{subsec:topological-small-boundary}

In this subsection, $\Gamma$ is a countably infinite discrete amenable
group, and $\Gamma\acts X$ is an action on a nonempty compact
metrizable space. Write $e=1_\Gamma$. We recall the additional facts
needed to pass from zero-dimensional actions to actions on more general spaces.

A subset $C\subseteq X$ is \emph{topologically small} if there is an
integer $L\geq1$ such that
$
   g_0C\cap\cdots\cap g_LC=\varnothing
$
whenever $g_0,\ldots,g_L\in\Gamma$ are distinct. The action has the
\emph{topological small boundary property} (TSBP) if $X$ has a basis of open
sets with topologically small boundaries
\cite[Definition~7.1]{KerrSzabo}.

We record the following elementary consequences of the definition of
topological smallness; see also \cite[Remark~7.2]{KerrSzabo}.
\begin{lemma}\label{lem:small-facts}
Subsets and finite unions of topologically small sets are topologically
small. Every Borel topologically small set is null for every invariant
probability measure. Moreover, $X$ itself is not topologically small.
Consequently, TSBP implies SBP.
\end{lemma}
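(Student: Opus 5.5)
The four assertions of Lemma~\ref{lem:small-facts} can be checked one at a time, directly from the definition of topological smallness.

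For subsets the claim is immediate: if $C'\subseteq C$ and $g_0C\cap\cdots\cap g_LC=\varnothing$ for distinct $g_i$, then the same holds with $C'$ in place of $C$. For finite unions it suffices to treat $C_1\cup C_2$, with constants $L_1$ and $L_2$.

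Take $L=L_1+L_2+1$ and distinct $g_0,\ldots,g_L$. Suppose some point $x$ lies in every $g_i(C_1\cup C_2)$. Then for each $i$ we have $g_i^{-1}x\in C_1$ or $g_i^{-1}x\in C_2$. By pigeonhole, at least $L_1+1$ indices fall in the first case or at least $L_2+1$ fall in the second. Either way $x$ lies in the intersection of $L_k+1$ distinct translates of some $C_k$, which is a contradiction.

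For the measure statement, let $C$ be Borel and topologically small with constant $L$, and let $\mu$ be invariant. Choose $N$ distinct elements $g_1,\ldots,g_N$ of the infinite group $\Gamma$. By hypothesis every point lies in at most $L$ of the sets $g_iC$, so
\[
   \sum_{i=1}^N\one_{g_iC}\le L .
\]
Integrating against $\mu$ and using invariance gives $N\mu(C)\le L$. Since $N$ is arbitrary, $\mu(C)=0$.

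That $X$ is not topologically small follows because $g_0X\cap\cdots\cap g_LX=X\neq\varnothing$, and $\Gamma$ is infinite, so $L+1$ distinct elements always exist. (Alternatively, it follows from the measure statement, since $\cM_\Gamma(X)\neq\varnothing$ by amenability and $\mu(X)=1$.)

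Finally, TSBP gives a basis of open sets $U$ whose boundaries $\partial U$ are topologically small. These boundaries are closed, hence Borel, so by the previous step $\mu(\partial U)=0$ for every invariant $\mu$. Since these $U$ form a basis, every point has arbitrarily small such neighborhoods, which is exactly SBP.

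None of these steps is a real obstacle. The only point requiring attention is the pigeonhole choice of $L$ in the union argument.
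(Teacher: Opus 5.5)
Your proposal is correct. Each assertion follows exactly as you argue: pigeonhole for finite unions, integrating $\sum_{i=1}^N\one_{g_iC}\le L$ against an invariant $\mu$ for nullity, and the basis property together with closedness of boundaries for SBP. This is the same elementary verification the paper intends, since it states the lemma without proof as a direct consequence of the definition and refers to \cite[Remark~7.2]{KerrSzabo}.
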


The reduction needed below is already available without minimality .
  We record 
\cite[Theorem~7.6 and Corollary~7.7]{KerrSzabo} 
in the
form we shall use.
\begin{theorem}[Kerr--Szab\'o]\label{thm:ks-reduction}
Suppose that every free action of $\Gamma$ on a nonempty compact
metrizable  zero-dimensional   space is almost finite. Then every free
action of $\Gamma$ on a nonempty compact metrizable space with TSBP
is almost finite. In particular, this holds for every free action
on a nonempty finite-dimensional compact metrizable space.
\end{theorem}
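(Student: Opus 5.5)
The statement to prove is the Kerr--Szab\'o reduction, Theorem~\ref{thm:ks-reduction}. The paper records it as a citation of \cite[Theorem~7.6 and Corollary~7.7]{KerrSzabo}, so the proof consists of checking that the hypotheses there match ours. Kerr and Szab\'o's argument runs as follows.

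Let $\Gamma\acts X$ be free, with $X$ compact metrizable and with TSBP. First, build a zero-dimensional extension. Choose a countable basis $\{U_k\}$ of open sets with topologically small boundaries. Let $Y$ be the closure of the image of $X\setminus\bigcup_{k,\gamma}\gamma\partial U_k$ under $x\mapsto(\one_{\gamma U_k}(x))_{k,\gamma}\in\{0,1\}^{\mathbb N\times\Gamma}$. Then $Y$ is a compact metrizable zero-dimensional $\Gamma$-space, and there is a factor map $p:Y\to X$.

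Second, check that $Y$ is free. This uses freeness of $X$ together with topological smallness. A point $y$ fixed by $\gamma\neq e$ would force $p(y)$ to be fixed, unless $p(y)$ lies in boundaries. Since each boundary is topologically small, the fibres of $p$ are uniformly controlled: any point lies in at most $L$ translates of a fixed boundary. This rules out fixed points in $Y$. Hence, by hypothesis, $\Gamma\acts Y$ is almost finite.

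Third, push almost finiteness down along $p$. Take a clopen castle in $Y$ with $(K,\delta)$-invariant shapes and small levels, which Theorem~\ref{thm:kerr} provides. Project its levels to $X$. Shrink them slightly to open sets, and control the discarded part, which is contained in translates of the small boundaries. Topological smallness (Lemma~\ref{lem:small-facts}) makes this leftover comparable to a small fraction of each tower, which yields condition (iv) of almost finiteness on $X$. This step is the main obstacle. The remainder has to be shown to satisfy $\prec$, and not merely to have small measure; this is exactly where topological smallness, rather than measure-smallness, is used.

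For the finite-dimensional case, apply \cite[Corollary~7.7]{KerrSzabo}: a free action on a finite-dimensional compact metrizable space has TSBP. The proof uses the fact that generic small perturbations of boundaries meet in general position, which makes intersections of more than $\dim X+1$ distinct translates empty. In our write-up we will simply invoke both cited results, noting that minimality is not assumed in either.
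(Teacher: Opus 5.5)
Your plan is the paper's own: the paper does not prove this statement but records it from \cite[Theorem~7.6 and Corollary~7.7]{KerrSzabo}, and neither result assumes minimality. Invoking those results is therefore all that is needed. If you keep your sketch of the Kerr--Szab\'o argument, it needs three corrections.

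\begin{enumerate}[label=\textup{(\arabic*)}]
\item \textbf{Step~2 does not need topological smallness.} Every extension of a free action is free. If $\gamma y=y$, then $\gamma p(y)=p(\gamma y)=p(y)$, so $\gamma=e$. There is no exceptional case at boundary points.

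\item \textbf{Step~3 is a heuristic, not an argument.} Condition~(iv) requires an actual $\prec$-subequivalence in $X$ of the discarded set into $\bigsqcup_i S_i'V_i$. Lemma~\ref{lem:small-facts} only gives that topologically small sets are null, which is exactly the measure-smallness you yourself say is insufficient. Also, shrinking the projected levels ``slightly'' leaves a neighbourhood of the boundary translates, not a subset of them.

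The descent must use the particular TSBP-extension, because almost finiteness does not pass to factors in general:
  \begin{itemize}
  \item every action has a zero-dimensional extension, and extensions of free actions are free;
  \item free zero-dimensional $\mathbb Z$-actions are almost finite (Corollary~\ref{cor:cantor-almost-finite});
  \item free minimal $\mathbb Z$-actions of positive mean dimension are not almost finite, since almost finiteness forces SBP by Theorem~\ref{thm:known-almost-finite}\,(i), and SBP forces mean dimension zero.
  \end{itemize}
Supplying this descent is precisely the content of the cited theorem.

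\item \textbf{The finite-dimensional input is misattributed.} TSBP for free actions on finite-dimensional spaces is Szab\'o's \cite[Theorem~3.8]{SzaboRokhlin}, recorded here as Proposition~\ref{prop:external-finite-dim}. By contrast, \cite[Corollary~7.7]{KerrSzabo} is the resulting almost-finiteness statement.
\end{enumerate}
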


\begin{proposition}[{\cite[Theorem~3.8]{SzaboRokhlin}}]
\label{prop:external-finite-dim}
Every free action of a countably infinite discrete group on a nonempty
finite-dimensional compact metrizable space has TSBP.
\end{proposition}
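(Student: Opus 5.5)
The plan is to produce the basis directly from level sets of generic continuous functions, using dimension theory together with a Baire category argument. Put $d=\dim X<\infty$ and $L:=d$. Fix a countable base $\{W_k\}$ of $X$. For each pair $k,k'$ with $\overline{W_{k'}}\subseteq W_k$, fix a continuous $f_{k,k'}:X\to[0,1]$ that equals $0$ on $\overline{W_{k'}}$ and $1$ off $W_k$. If we find small perturbations $f$ of these functions and thresholds $t\in(0,1)$ such that $f^{-1}(t)$ is topologically small, then the sets $U=\{f<t\}$ are open, satisfy $\overline{W_{k'}}\subseteq U\subseteq W_k$, and have $\partial U\subseteq f^{-1}(t)$. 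Subsets of topologically small sets are topologically small by Lemma~\ref{lem:small-facts}, so these $U$ form a basis with topologically small boundaries. This is exactly TSBP.

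Fix $t$. It suffices to show that the set $\mathcal G_t$ of $f\in C(X,\mathbb R)$ with
\[
   g_0f^{-1}(t)\cap\cdots\cap g_Lf^{-1}(t)=\varnothing
   \quad\text{for all distinct }g_0,\dots,g_L\in\Gamma
\]
is comeagre in the sup-norm topology. Since $\Gamma$ is countable, there are only countably many tuples $\bar g=(g_0,\dots,g_L)$. For each tuple we consider
\[
   \mathcal G_{\bar g}:=\{f:\ \text{no }x\text{ satisfies }f(g_i^{-1}x)=t\text{ for all }i\}.
\]
The set $\mathcal G_{\bar g}$ is open by compactness of $X$: the map $x\mapsto(f(g_i^{-1}x))_i$ has compact image, and this image avoids the point $(t,\dots,t)$ with a positive margin. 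Once each $\mathcal G_{\bar g}$ is also shown to be dense, the Baire category theorem gives that $\mathcal G_t=\bigcap_{\bar g}\mathcal G_{\bar g}$ is dense. So every $f_{k,k'}$ can be perturbed into $\mathcal G_t$, which is all the first paragraph needs.

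The main obstacle is density of $\mathcal G_{\bar g}$, and this is where freeness enters. I would cover $X$ by finitely many closed sets $V_1,\dots,V_N$ so small that, for each $j$, the sets $g_0^{-1}V_j,\dots,g_L^{-1}V_j$ are pairwise disjoint; this is possible because freeness makes the points $g_i^{-1}x$ distinct for every $x$. On one piece $V_j$, the coordinates $f\circ g_i^{-1}$ depend on the values of $f$ on $L+1$ disjoint closed sets, so they can be perturbed independently by Tietze extension. By the classical general-position theorem, maps from a $d$-dimensional compact space into $\mathbb R^{d+1}$ that miss a given point are dense. Applying this to $x\mapsto(f(g_i^{-1}x))_i$ on $V_j$, we can move the map off $(t,\dots,t)$ over $V_j$.

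The difficulty is that the pieces $g_i^{-1}V_j$ for different $j$ overlap, so a perturbation made for one $j$ can spoil the avoidance already achieved for an earlier $j'$. I would handle this inductively, keeping each later perturbation smaller than the margin obtained at the earlier stages, so that previously achieved avoidance is preserved. Openness of the avoidance condition is what makes such a margin available. If the naive induction fails, the fallback is to pass to a zero-dimensional refinement argument, perturbing $f$ to be locally constant off a small set, or to follow Szab\'o's argument in \cite{SzaboRokhlin} more closely.
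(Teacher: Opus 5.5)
Your argument is correct. The route differs from the paper's only in that the paper gives no proof at all: Proposition~\ref{prop:external-finite-dim} is imported from \cite[Theorem~3.8]{SzaboRokhlin} and used as a black box in the proofs of Theorems~\ref{thm:main-almost-finite} and~\ref{thm:main-crossed-products}. You supply a self-contained general-position and Baire-category proof. Fix $t$, show that for each tuple of distinct elements the avoidance set $\mathcal G_{\bar g}$ is open and dense, intersect over the countably many tuples, and read off the basis from sublevel sets $\{f<t\}$ of perturbed Urysohn functions with $\|f-f_{k,k'}\|<\min(t,1-t)$. The citation buys brevity. Your version makes three things explicit:
\begin{itemize}
\item amenability plays no role;
\item freeness is used only to separate the finitely many points $g_0^{-1}x,\dots,g_L^{-1}x$ locally;
\item a single constant $L$ depending only on $\dim X$ works for every boundary in the basis.
\end{itemize}

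Three small remarks.

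First, the overlap problem you flag as the main obstacle is not one, so no fallback is needed. For each piece $V_j$, the set $\mathcal U_j$ of $f$ whose map $x\mapsto(f(g_i^{-1}x))_i$ avoids $(t,\dots,t)$ on $V_j$ is open by compactness. Your Tietze construction shows it is dense: prescribe $f+h$ on the pairwise disjoint closed sets $g_i^{-1}V_j$ from a nearby map $\Psi$ avoiding the point, with $\sup|h|<\varepsilon$. Then $\mathcal G_{\bar g}=\bigcap_{j=1}^N\mathcal U_j$ is a finite intersection of open dense sets, hence open and dense. Your margin-controlled induction is exactly this observation.

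Second, for the general-position step, the standard justification is the extension characterization of covering dimension. Given $\phi:V_j\to\mathbb R^{d+1}$, take a small cube $B$ around $(t,\dots,t)$, so that $\partial B\cong S^d$. Since $\dim\phi^{-1}(B)\le d$, the map $\phi|_{\phi^{-1}(\partial B)}$ extends over $\phi^{-1}(B)$ into $\partial B$. Pasting this extension with $\phi$ gives a map within $\operatorname{diam}B$ of $\phi$ that misses the interior of $B$.

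Third, when $\dim X=0$ your choice $L=d=0$ conflicts with the requirement $L\ge1$ in the definition of topological smallness. This is harmless: the condition is monotone in $L$, since an intersection of $L'+1$ distinct translates is contained in an intersection of $L+1$ of them. So take $L=\max(d,1)$.
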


\section{Clopen matching constructions}\label{sec:matching}
The purpose of this section is to isolate the combinatorial mechanism
behind Theorem~\ref{thm:main-three-two}.  Let us first describe the
idea of the proof.  A witness to
\[
   3a\leq 2b
\]
places three disjoint copies of a representative of $a$ inside two
copies of a representative of $b$.  In the application below, the
three copies will be denoted by
$
   F_0,\ F_1,\ F_2
$
and will lie in
$
   W=B^0\sqcup B^1,
$
where $B^0$ and $B^1$ are two copies of a representative of $b$.
Since the three sets $F_i$ represent the same type, they can be
cyclically identified.  This gives an order-three map $p$ on
\[
   F:=F_0\sqcup F_1\sqcup F_2,
\]
whose orbits consist of the three corresponding copies of the same
point.  On the other hand, the two ambient copies $B^0$ and $B^1$
are naturally exchanged by an involution $q$.

The key point is then to find a clopen set $C\subseteq F$ which
meets every $p$-orbit in exactly one point and every $q$-orbit in
at most one point.  The first condition guarantees that $C$ is
equidecomposable with one of the three copies $F_i$, and hence
\[
   [C]=a.
\]
The second condition allows the part of $C$ lying in $B^1$ to be
moved by $q$ into $B^0$ without creating collisions.  More precisely,
\[
   D:=(C\cap B^0)\sqcup q(C\cap B^1)
\]
is then a clopen subset of $B^0$ equidecomposable with $C$.  Thus
\[
   a=[C]=[D]\leq[B^0]=b,
\]
which is exactly the desired conclusion.

This reduces the proof to a clopen selection problem.  It is naturally
encoded as a bipartite matching problem: the left vertices are the
three-point $p$-orbits, the right vertices are the two-point $q$-orbits,
and the points of $F$ are the edges joining the corresponding orbit
classes.  A clopen left-perfect matching is precisely a clopen choice
of one representative from each $p$-orbit, with no $q$-orbit used
twice.

This section develops the matching machinery needed to
produce such a choice.  We first describe the orbit-incidence graph,
then show how to eliminate bounded-length augmenting paths by clopen
modifications.  This leads to a finite criterion for the existence of
a clopen left-perfect matching.  Finally, we verify this criterion for
the orbit graphs arising from the free product
$
   C_3*C_2,
$
which is the algebraic structure generated by the order-three map
$p$ and the involution $q$.  These results will be applied in
Section~\ref{sec:bisection-matching} to the finite-order bisections
arising from the original comparison problem.

\subsection{Orbit incidence graphs}\label{subsec:incidence}
We now encode the preceding selection problem as a bipartite
matching problem.  The set $F$ is the region in which one chooses
representatives of the three-point $p$-orbits, while the
two-point $q$-orbits provide the constraint that no two chosen
points may belong to the same $q$-orbit.  Thus the natural incidence
graph has $p$-orbits in $F$ on the left and $q$-orbits in $Y$ on the
right, with points of $F$ as edges.  The next lemma makes this
correspondence precise.

\begin{lemma}\label{lem:finite-sections}
Let $Y$ be a compact zero-dimensional space, and let
$p,q\in\Homeo(Y)$ satisfy
$
   p^3=q^2=\id_Y.
$
Let $F\subseteq Y$ be clopen and $p$-invariant.  Suppose that every
$p$-orbit in $F$ has three points and every $q$-orbit in $Y$
has two points.
Consider the bipartite multigraph whose left vertex space, right vertex
space, and edge space are
\[
   L=F/\langle p\rangle,\qquad
   R=Y/\langle q\rangle,\qquad
   E=F,
\]
respectively, with endpoint maps
\[
   s(y)=\langle p\rangle y,\qquad
   r(y)=\langle q\rangle y.
\]
Here $F/\langle p\rangle$ and $Y/\langle q\rangle$ denote the
orbit spaces with the quotient topology.
Then this graph has a finite clopen presentation.

Moreover, for a clopen set $C\subseteq F$, the edge subset $C$ is
a left-perfect clopen matching if and only if
\[
   F=C\sqcup pC\sqcup p^2C,
   \qquad
   C\cap qC=\varnothing.
\]
\end{lemma}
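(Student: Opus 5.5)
The plan is to use Lemma~\ref{lem:finite-group-domain} twice to obtain clopen fundamental domains, and to read off the presentation from them. The $C_3$-action on $F$ generated by $p$ is free, since all $p$-orbits have three points and $p^3=\id$. So $F$ has a clopen fundamental domain $D_F$ with $F=D_F\sqcup pD_F\sqcup p^2D_F$. In the same way, the $C_2$-action on $Y$ generated by $q$ is free, giving a clopen $D_Y$ with $Y=D_Y\sqcup qD_Y$.

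The first step is to identify the orbit spaces. The quotient map $F\to F/\langle p\rangle$ restricted to $D_F$ is a continuous bijection from a compact space onto a Hausdorff space, hence a homeomorphism. The quotient is Hausdorff because the quotient of a compact Hausdorff space by a finite group action is Hausdorff. The same holds for $D_Y\to Y/\langle q\rangle$. We therefore identify $L\cong D_F$ and $R\cong D_Y$; both are compact and zero-dimensional.

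Next we build the presentation. Index by $I=\{(k,\epsilon):k\in\{0,1,2\},\ \epsilon\in\{0,1\}\}$. For each $(k,\epsilon)$ put
\[
D_{k,\epsilon}:=\{x\in D_F:p^kx\in q^\epsilon D_Y\},
\]
which is clopen. Define $\phi_{k,\epsilon}(x)=q^\epsilon p^k x$, a homeomorphism of $D_{k,\epsilon}$ onto a clopen subset of $D_Y$. Discard the empty pieces. Set $\Psi(k,\epsilon,x)=p^kx\in F$. Since every $y\in F$ is uniquely $p^kx$ with $x\in D_F$, and lies in exactly one of $D_Y$, $qD_Y$, the map $\Psi$ is a bijection onto $E=F$. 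It is a homeomorphism because it is a continuous bijection between compact Hausdorff spaces, so the transported topology agrees with the original one. Under the identifications, $s(\Psi(k,\epsilon,x))=\langle p\rangle p^kx \leftrightarrow x$ and $r(\Psi(k,\epsilon,x))=\langle q\rangle p^kx\leftrightarrow q^\epsilon p^kx=\phi_{k,\epsilon}(x)$, as required. The only care needed is this bookkeeping; I expect no real obstacle.

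For the matching criterion, take $C\subseteq F$ clopen, so it is a clopen edge set. Injectivity of $s$ on $C$ means that $C$ meets each $p$-orbit at most once. Left-perfectness means that it meets each $p$-orbit at least once. Together these say exactly $F=C\sqcup pC\sqcup p^2C$. Injectivity of $r$ on $C$ means that no two distinct points of $C$ lie in a common $q$-orbit. Since $q$-orbits are $\{y,qy\}$ with $qy\neq y$, this is equivalent to $C\cap qC=\varnothing$. This proves both directions.
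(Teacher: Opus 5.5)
Your proposal is correct and follows the paper's proof essentially verbatim. Both use clopen fundamental domains for $p$ on $F$ and for $q$ on $Y$ from Lemma~\ref{lem:finite-group-domain}, and charts indexed by $(k,\epsilon)$ with domains $\{x\in D_F:p^kx\in q^\epsilon D_Y\}=p^{-k}(p^kD_F\cap q^\epsilon D_Y)$. Your maps $q^{\epsilon}p^k$ agree with the paper's $q^{-j}p^i$ because $q^2=\id_Y$, and you use the same $\Psi(k,\epsilon,x)=p^kx$ and the same orbit-counting reading of the matching conditions. Your extra justifications, namely that the finite-group quotient is Hausdorff and that the topology transported to $E$ agrees with that of $F$, only make explicit what the paper asserts without comment.
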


\begin{proof}
By Lemma~\ref{lem:finite-group-domain}, choose clopen fundamental
domains $B\subseteq F$ and $D\subseteq Y$ such that
\[
   F=B\sqcup pB\sqcup p^2B
   \qquad\text{and}\qquad
   Y=D\sqcup qD.
\]
The quotient maps restrict to homeomorphisms
\begin{equation}\label{eq:indetifications}
      B\cong F/\langle p\rangle\qquad\text{and}
   \qquad
   D\cong Y/\langle q\rangle. 
\end{equation}
We therefore identify the left and right vertex spaces with separate
copies of $B$ and $D$, respectively.

For $0\le i<3$ and $0\le j<2$, set
\[
   A_{ij}:=p^iB\cap q^jD.
\]
These sets form a clopen partition of $F$.  Under the identifications
\eqref{eq:indetifications},
for an edge $y\in A_{ij}$, the left endpoint $\langle p\rangle y$ is represented by
$p^{-i}y\in B$, while the right endpoint $\langle q\rangle y$
is represented by $q^{-j}y\in D$.
For the nonempty $A_{ij}$,  define
\[
   \phi_{ij}:=q^{-j}p^i:
  p^{-i}A_{ij}\to q^{-j}A_{ij}.
\]
Let
\[
   \widetilde E
   :=
   \bigsqcup_{i,j}
   \bigl(\{(i,j)\}\times p^{-i}A_{ij}\bigr).
\]
Then
\[
   \Psi:\widetilde E\to F,
   \qquad
   \Psi((i,j),x)=p^ix,
\]
is a homeomorphism.  Under the identifications
\eqref{eq:indetifications},
it preserves the source and range maps.  Hence
$\{\phi_{ij}\}$, together with $\Psi$, gives a finite clopen
presentation of the original bipartite multigraph.

Now let $C\subseteq F$ be clopen.  The source condition for a matching
means that $C$ meets each $p$-orbit in at most one point, while
left-perfectness means that it meets each such orbit exactly once.
Thus
\[
   F=C\sqcup pC\sqcup p^2C.
\]
The range condition means that $C$ meets each $q$-orbit in at most
one point, which, since every $q$-orbit has two points, is equivalent
to
$
   C\cap qC=\varnothing.
$
\end{proof}

\subsection{Eliminating short augmenting paths}\label{subsec:augmentations}

Having encoded the selection problem as a clopen matching problem, we
now develop the basic augmentation procedure needed to enlarge a
partial matching.  As in the classical matching argument, the key is
to eliminate short augmenting paths; here the modifications must
remain clopen throughout.  Related Borel constructions appear in
\cite[Proposition~1.1]{ElekLippner} and
\cite[Lemma~2.5]{ConleyJacksonKerrMarksSewardTuckerDrob}.
We include the argument because the clopen regularity required here
does not follow directly from those results.
\begin{lemma}\label{lem:shortest-augmentations}
For every clopen matching $E_{M_0}$ in a graph with a finite clopen
presentation, there is a sequence of clopen matchings $E_{M_h}$,
starting with $E_{M_0}$, such that
\begin{enumerate}[label=\textup{(\roman*)}]
\item $\dom M_h\subseteq\dom M_{h+1}$ and
      $\ran M_h\subseteq\ran M_{h+1}$;
\item for $h\geq1$, $M_h$ has no augmenting path of length at most
      $2h-1$.
\end{enumerate}
\end{lemma}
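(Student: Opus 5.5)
We construct the matchings by induction on $h$, following the classical Hopcroft--Karp scheme. At stage $h$ we flip a maximal family of pairwise vertex-disjoint shortest augmenting paths of length $2h-1$. The point is to carry this out with clopen sets.

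Put $E_{M_1}:=E_{M_0}$ if $E_{M_0}$ has no augmenting path of length $1$. Otherwise first handle length one, and in general, given $E_{M_h}$ with no augmenting path of length at most $2h-1$, we produce $E_{M_{h+1}}$ with none of length at most $2h+1$. Flipping never unmatches a vertex, so monotonicity (i) is automatic.

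\textbf{Step 1 (encoding paths).} A path of length $2h+1$ is determined by its starting vertex $x_0$ together with a word recording, at each step, the label $i\in I$ of the edge used. Along nonmatching edges we use $\phi_i$; along matching edges we use $M_h^{-1}$ restricted to the pieces $\phi_i(A_i)$. Hence for each of the finitely many label words $w$ there is a clopen partial homeomorphism $\Phi_w$, a composite of the maps $\phi_i$, $\phi_i^{-1}$ and restrictions. The set $S_w$ of starting points $x_0\in L\setminus\dom M_h$ for which the word $w$ gives an augmenting path (nonmatching conditions on the edges, distinct vertices, terminal vertex outside $\ran M_h$) is then clopen. Here we use that $\dom M_h$ and $\ran M_h$ are clopen, and that the conditions are finitely many equalities and inequalities between continuous maps into Hausdorff spaces. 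Each vertex of the path, namely $x_j(x_0)$ and $y_j(x_0)$, is a continuous function of $x_0$ on $S_w$.

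\textbf{Step 2 (clopen maximal disjoint family).} This is the main obstacle. We must choose a clopen set of augmenting paths that are pairwise vertex-disjoint and maximal, so that every remaining short augmenting path meets a chosen one.

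The plan is a finite greedy selection using a clopen colouring. For each $w$ the conflict relation, ``the path from $x_0$ shares a vertex with the path from $x_0'$'', is given by finitely many partial homeomorphisms $\psi=\Phi^{-1}\circ\Phi'$ between start points. Each start point therefore conflicts with only boundedly many others; the bound $N$ depends on $|I|$ and $h$, since every vertex has finite degree bounded by $|I|$ on the left. Conflicts with itself, where a path meets itself, are excluded by the path condition. By compactness and zero-dimensionality we partition $\bigsqcup_w S_w$ into finitely many clopen pieces $P_1,\dots,P_K$ with no internal conflicts. Such a piece can be taken small enough around each point that it misses its images under the finitely many conflicting maps: each such map without a fixed point on a clopen set is separated locally, and we refine over the finite clopen partition.

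We then process $P_1,\dots,P_K$ in order. Let $G_1=P_1$, and let $G_k$ be the set of points of $P_k$ not conflicting with $G_1\cup\dots\cup G_{k-1}$. This set is $P_k$ minus finitely many images of clopen sets under clopen partial homeomorphisms, hence clopen. The union $G=\bigcup_k G_k$ is a clopen, pairwise disjoint, maximal family.

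\textbf{Step 3 (flip and verify).} Flip all paths of $G$ simultaneously. The new edge set is obtained from $E_{M_h}$ by removing and adding edges indexed by clopen pieces of the form $\{i\}\times(\text{clopen})$, so $E_{M_{h+1}}$ is a clopen matching.

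The no-short-paths claim is the standard Hopcroft--Karp lemma: after augmenting along a maximal disjoint family of shortest augmenting paths, the shortest augmenting path length strictly increases. This comes from the symmetric-difference counting argument, which is purely combinatorial and applies pointwise. Since the old shortest length was at least $2h+1$, the new matching has no augmenting path of length at most $2h+1$. If the old shortest length exceeded $2h+1$, we simply set $E_{M_{h+1}}=E_{M_h}$.
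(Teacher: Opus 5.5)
Your argument is correct in substance, but it takes a different route from the paper.

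\textbf{How the paper proceeds.} The paper uses no colouring. It fixes the directed distance $d$ from the unmatched left vertices and shows that $d(v_j^{\mathbf i}(x))=j$ along every augmenting path of the current minimal length $\ell$. This single fact is used twice.
\begin{enumerate}
\item It makes paths with the same label string and distinct roots automatically vertex-disjoint: at equal positions this follows from injectivity of $v_j^{\mathbf i}$, and at different positions the two vertices have different distances. So the greedy selection runs directly over the finitely many strings.
\item It proves the length increase pointwise. After flipping, every edge of the new directed graph raises $d$ by at most one, and the new backward edges lower it by one. Hence an augmenting path of length $\le\ell$ for the new matching cannot use any new backward edge. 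It therefore avoids all retained paths, so it is an old augmenting path, contradicting maximality.
\end{enumerate}

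\textbf{How your route differs.} You separate the two tasks. First you build a clopen maximal vertex-disjoint family by a finite clopen colouring of the conflict relation, which is generated by finitely many fixed-point-free clopen partial homeomorphisms. This step does not use shortestness and would work for any clopen family of finite configurations. Then you import the Hopcroft--Karp symmetric-difference lemma. Your route is more modular, but both halves need a small supplement that the paper's distance argument provides for free.

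\textbf{First supplement: clopenness of $S_w$.} Your reason for $S_w$ being clopen is not valid as stated. The set where two continuous maps differ is open but need not be closed. The correct reason comes from the inductive hypothesis: every alternating walk of length $2h+1$ from an unmatched left vertex to an unmatched right vertex is automatically a path. A repeated vertex would shortcut to an augmenting path of length at most $2h-1$. So the distinctness condition is vacuous, and this is also what makes your conflict maps fixed-point-free.

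\textbf{Second supplement: localizing Hopcroft--Karp.} The textbook count uses $|M'|=|M|+t$, which is meaningless when infinitely many paths are flipped, so you must localize.
\begin{itemize}
\item Let $P'$ be a new augmenting path, and write $E(\cdot)$ for edge sets. Only finitely many selected paths meet $P'$; call this family $\mathcal P_1$.
\item Undo the flips along all other selected paths. This gives the matching $N:=M_h\mathbin\triangle E(\mathcal P_1)\mathbin\triangle E(P')$. It differs from $M_h$ on a finite set and has $|\mathcal P_1|+1$ more edges there.
\item The usual counting then gives $|P'|\ge(2h+1)+2|E(P')\cap E(\mathcal P_1)|$.
\item If $|P'|\le 2h+1$, then $P'$ shares no edge with $\mathcal P_1$.
\item If $\mathcal P_1\neq\varnothing$, this is impossible. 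Then $P'$ passes through a vertex of a flipped path. That vertex is matched in the new matching by an edge of that path, and $P'$ must use this edge.
\item If $\mathcal P_1=\varnothing$, then $P'$ contradicts maximality.
\end{itemize}

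With these two additions your proof is complete.
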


\begin{proof}
Fix a clopen matching $E_M$.  Direct every graph edge from $L$ to
$R$, and add the matching edges in the reverse direction via
\[
   M^{-1}:\ran M\longrightarrow\dom M.
\]
A directed walk from an unmatched left vertex to an unmatched right
vertex can be shortened, by deleting subwalks between repeated
vertices, to a path.  Such a path is augmenting.  Indeed, whenever the
path reaches an internal left vertex, the preceding right vertex is
its matching partner, and hence the next forward edge cannot be the same
matching edge, since this would repeat the preceding vertex.

Suppose that $M$ has no augmenting path strictly shorter than
$
   \ell=2k+1.
$
Let $I$ be the finite index set of the clopen presentation.  For
$
   \mathbf i=(i_0,\ldots,i_k)\in I^{k+1},
$
let $O_{\mathbf i}\subseteq L$ be the set of unmatched left vertices
for which
\[
   \phi_{i_0},M^{-1},\phi_{i_1},M^{-1},
   \ldots,M^{-1},\phi_{i_k}
\]
is successively defined and ends at an unmatched right vertex.
Since all relevant domains and ranges are clopen,
$O_{\mathbf i}$ is clopen and may be empty.  We simply
discard those strings for which $O_{\mathbf i}=\varnothing$.

For $0\leq j\leq\ell$, let
$
   v_j^{\mathbf i}:O_{\mathbf i}\to L\sqcup R
$
denote the $j$-th vertex of the corresponding walk.  Each
$v_j^{\mathbf i}$ is a homeomorphism onto a clopen image.
Every such walk is a path, otherwise deleting a repeated-vertex
subwalk would give an augmenting path of length smaller than $\ell$.

Let
$
   d(v)\in\mathbb Z_{\geq0}\cup\{\infty\}
$
be the directed distance, with respect to $M$, from the set of
unmatched left vertices.  Along every path above,
\begin{equation}\label{eq:shortest-distance}
   d\bigl(v_j^{\mathbf i}(x)\bigr)=j
   \qquad
   (0\leq j\leq\ell).
\end{equation}
Indeed, a shorter directed path to $v_j^{\mathbf i}(x)$, followed by
the remaining part of the displayed path, would give an augmenting
walk of length smaller than $\ell$, and hence a shorter augmenting
path.

It follows that, for a fixed $\mathbf i$, paths with distinct roots
are vertex-disjoint.  If they met in the same position, injectivity of
$v_j^{\mathbf i}$ would identify their roots; if they met in
different positions, \eqref{eq:shortest-distance} would give two
different values for the same distance.

Enumerate the finitely many strings $\mathbf i\in I^{k+1}$.
Proceeding in this order, retain from each $O_{\mathbf i}$ only
those roots whose paths avoid all vertices used by previously retained
paths.  If $W\subseteq L\sqcup R$ is the previously used vertex set,
the retained roots are
\[
   O_{\mathbf i}'
   =
   O_{\mathbf i}\setminus
   \bigcup_{j=0}^{\ell}
      (v_j^{\mathbf i})^{-1}(W).
\]
Inductively, $W$ and hence $O_{\mathbf i}'$ are clopen.  We obtain
a clopen family of pairwise vertex-disjoint augmenting paths of length
$\ell$, maximal among all such paths.

Flip all retained paths simultaneously and denote the resulting
matching by $M'$. 
Since there are only finitely many strings and
positions, the removed and added edge sets are finite unions of
clopen subsets of the edge charts.  Hence $E_{M'}$ is clopen.
Vertex-disjointness shows that it is a matching.  Moreover, as flipping along an augmenting path preserves the matched status of every previously matched vertex and matches the two previously unmatched endpoints, one has
\begin{equation}\label{eq:incresing}
       \dom M\subseteq\dom M',
   \qquad
   \ran M\subseteq\ran M'.
\end{equation}

It remains to show that $M'$ has no augmenting path of length at most
$\ell$.  Keep the old distance $d$.  Every forward edge and every
unchanged backward matching edge increases $d$ by at most one,
whereas a new backward matching edge reverses a forward edge of a
selected path and, by \eqref{eq:shortest-distance}, decreases $d$
by one.

Suppose that $M'$ has an augmenting path
$
   u_0,u_1,\ldots,u_m
$
of length $m\leq\ell$.
According to \eqref{eq:incresing},
the endpoints $u_0$ and $u_m$, which are unmatched for $M'$,
were already unmatched for $M$.  Hence
$
   d(u_0)=0.
$
Moreover,
$
   d(u_m)\geq\ell,
$
for otherwise a directed path for $M$ from an unmatched left vertex
to $u_m$ of length less than $\ell$ would yield an augmenting path
for $M$ shorter than $\ell$.

Along every directed edge $u_j\to u_{j+1}$ for $M'$, we have
\[
   d(u_{j+1})\leq d(u_j)+1.
\]
Therefore
\[
   \ell
   \leq d(u_m)
   \leq m
   \leq\ell.
\]
Thus $m=\ell$, and equality must hold at every step, i.e.
\[
   d(u_{j+1})=d(u_j)+1
   \qquad(0\leq j<m).
\]
In particular, the path cannot use a new backward matching edge,
since every such edge decreases $d$ by one.

We claim that the hypothetical augmenting path is disjoint from all
retained paths.  Indeed, every right vertex of a retained path is
matched in $M'$.  Hence it cannot be the terminal vertex of the
hypothetical path; if it occurs as an internal vertex, the path must
leave it along its $M'$-matching edge backwards.  This is one of the
new backward matching edges, which we have just shown cannot occur.
Similarly, every left vertex of a retained path is matched in $M'$.
Hence it cannot be the initial vertex of the hypothetical path; if it
occurs later, the path must enter it along a backward $M'$-matching
edge, again one of the new backward matching edges.  Thus the
hypothetical path is disjoint from every retained path.

Moreover, since it uses no new backward matching edge, every directed
edge of the hypothetical path was already present in the directed
graph associated to $M$.  Its endpoints were also unmatched for
$M$.  Hence it is an augmenting path for $M$ of length $\ell$.
This path is disjoint from the retained family, contradicting the
maximality of that family.

Starting with $M_0$, apply this construction successively for
$
   \ell=1,3,5,\ldots.
$
At the $h$-th stage take $\ell=2h-1$.  The resulting sequence
$M_h$ satisfies \textup{(i)} and \textup{(ii)}.
\end{proof}

The preceding lemma gives the following consequence.  The hypothesis
below gives, for each initially unmatched left vertex $x$, a bound
on the length of an augmenting path whenever $x$ is still unmatched.
Compactness will then give a finite stage at which every left vertex
is matched.

\begin{proposition}\label{prop:finite-cores}
Let $E_{M_0}\subset E$ be a clopen matching in a bipartite
multigraph with a finite clopen presentation.  Suppose that for every
$x\in L\setminus\dom M_0$ there are a finite set
$S_x\subseteq L$, with $x\in S_x$, and edges
$
   e_z\in E,$ $z\in S_x,
$
such that $s(e_z)=z$ and
\begin{enumerate}[label=\textup{(\roman*)}]
\item the right endpoints $r(e_z)$, $z\in S_x$, are pairwise
      distinct;
\item every edge whose right endpoint is one of the $r(e_z)$ has
      its left endpoint in $S_x$.
\end{enumerate}
Then there is a clopen left-perfect matching  $E_M\subset E$.

The final matching need not contain the edges of $E_{M_0}$, but
every vertex matched by $M_0$ remains matched.  
\end{proposition}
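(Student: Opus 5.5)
Apply Lemma~\ref{lem:shortest-augmentations} to $E_{M_0}$. This produces clopen matchings $E_{M_h}$ with increasing domains and ranges, such that $M_h$ has no augmenting path of length at most $2h-1$. Since every vertex matched by $M_0$ stays matched, it suffices to find a single $h$ with $\dom M_h=L$. That stage then gives the required clopen left-perfect matching.

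\textbf{Step 1: pointwise bound.} For a fixed $x\in L\setminus\dom M_0$, with data $S_x$ and $(e_z)$ as in the statement, we show that $x\in\dom M_h$ once $h>|S_x|$. Suppose $x\notin\dom M_h$ and put $R_x=\{r(e_z):z\in S_x\}$, which by (i) has exactly $|S_x|$ elements. Consider the set of vertices reachable from $x$ by alternating walks: a forward edge from $L$ to $R$, then back along $M_h^{-1}$. A walk starting at $x$ and staying inside $S_x\cup R_x$ has every right vertex in $R_x$. Such a right vertex is either unmatched, which yields an augmenting path, or matched to a left vertex lying in $S_x$ by (ii).

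Let $T\subseteq S_x$ be the left vertices reachable from $x$ while the edges $e_z$ are used. If every right vertex $r(e_z)$ with $z\in T$ were matched, then $M_h$ would match $\{r(e_z):z\in T\}$ injectively into $T\setminus\{x\}$, since $x$ is unmatched. That set is too small, because $z\mapsto r(e_z)$ is injective on $T$. This is the Hall-type contradiction. So some $r(e_z)$ with $z\in T$ is unmatched, and we obtain an augmenting path. Its vertices are distinct (shorten repeated vertices if necessary) and lie in $S_x\cup R_x$, so its length is at most $2|S_x|-1$. This contradicts the defining property of $M_h$ when $h\ge|S_x|$.

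\textbf{Step 2: compactness.} Each $\dom M_h$ is clopen, and the family $\{\dom M_h\}_h$ increases. By Step~1 these sets cover the compact set $L\setminus\dom M_0$. Hence some single $\dom M_h$ contains it, and then $\dom M_h=L$.

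The main care lies in Step~1. One must make sure the reachable set genuinely stays inside $S_x\cup R_x$, and this is exactly where condition (ii) is used: it is what keeps the matching partners of the right vertices $r(e_z)$ inside $S_x$. One must also justify restricting attention to the edges $e_z$ rather than arbitrary edges. This is harmless, since the augmenting path need only exist among some edges, and we build it only from the $e_z$ and the matching edges. Compactness in Step~2 is routine once the bound is pointwise.
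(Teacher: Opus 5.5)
Your proposal is correct and follows essentially the paper's proof: you obtain the per-vertex bound $2|S_x|-1$ on the length of some augmenting path from (i)--(ii), then combine Lemma~\ref{lem:shortest-augmentations} with compactness of the nested clopen sets. Your reachable-set pigeonhole count is a rephrasing of the paper's argument. Since each left vertex $z$ uses only its prescribed edge $e_z$, the reachable set is exactly the forward orbit of $x$ under the injective partial map $M^{-1}\circ m_x$, and the paper notes that this orbit cannot return to the unmatched vertex $x$.
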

\begin{proof}
Fix $x\in L\setminus\dom M_0$ and the corresponding finite data.
Define
\[
   m_x:S_x\to R,
   \qquad
   z\mapsto r(e_z).
\]
By \textup{(i)}, $m_x$ is injective.

Let $M$ be any matching for which $x$ is still unmatched.
Starting from $x$, follow the prescribed edge $e_x$ to $m_x(x)$.
If this right vertex is unmatched, we stop.  Otherwise follow its
matching edge backwards and repeat the same procedure.  By
\textup{(ii)}, every left vertex reached in this way belongs to
$S_x$.

The successive left vertices are obtained by iterating the partial map
\[
   T:=M^{-1}\circ m_x,
   \qquad
   \dom T=\{z\in S_x:m_x(z)\in\ran M\}.
\]
The map $T$ is injective.  Moreover,
$
   \ran T\subseteq\dom M,
$
whereas $x\notin\dom M$.  Hence $x\notin\ran T$.

The forward orbit of $x$ under $T$ cannot repeat.  Indeed, if
$
   T^i(x)=T^j(x)$
for $0\le i<j$,
then injectivity gives
\[
   x=T^{j-i}(x)\in\ran T,
\]
a contradiction.  Since all these left vertices lie in the finite set
$S_x$, the procedure must terminate after at most $|S_x|$ left
vertices, at an unmatched right vertex.

The left vertices on the resulting chain are therefore distinct, 
and (i) implies that the corresponding right vertices are distinct as well. 
The prescribed forward edges lie outside the matching. 
Indeed, the first such edge starts at the unmatched vertex
 $x=x_0$. For $i\ge1$,
  the unique matching edge incident to $x_i$ is the edge joining $x_i$ to the preceding right vertex $y_{i-1}$, 
  since $M(x_i)=y_{i-1}$. 
  On the other hand, the prescribed forward edge joins $x_i$ to $y_i=m_x(x_i)$. 
  Since the right vertices are distinct, $y_i\ne y_{i-1}$, 
  and hence this prescribed edge does not belong to $M$. 
  Thus, the chain alternates between edges outside $M$ and edges in $M$, 
  and is therefore an augmenting path of length at most $2|S_x|-1$.
In particular, this bound holds for every matching which still leaves
$x$ unmatched.

Now apply Lemma~\ref{lem:shortest-augmentations}, and set
\[
   U_h:=L\setminus\dom M_h.
\]
The sets $U_h$ are decreasing and clopen.  Fix $x\in U_0$ and put
$
   N_x:=|S_x|.
$
If $x\in U_h$ for some $h\ge N_x$, then the preceding argument,
applied to $M_h$, gives an augmenting path of length at most
$
   2N_x-1\le2h-1,
$
contrary to Lemma~\ref{lem:shortest-augmentations}.  Hence every
$x\in U_0$ is absent from $U_h$ for all sufficiently large $h$,
and therefore
\[
   \bigcap_{h\ge0}U_h=\varnothing.
\]

By compactness of the nested sets $U_h$, we have
$U_H=\varnothing$ for some finite $H$. Hence $M_H$ is a clopen left-perfect matching.  The
monotonicity in Lemma~\ref{lem:shortest-augmentations} also shows that
every vertex matched by $M_0$ remains matched.
\end{proof}

\subsection{Finite data from a relation}
\label{subsec:cycle-exit}

We now verify the hypothesis of
Proposition~\ref{prop:finite-cores} for the orbit-incidence graph
associated with $P=C_3$ and $Q=C_2$.  The argument is based on a
simple dichotomy.  Starting from a point fixed by a nonidentity
element of
$
   \Pi=P*Q,
$
either its orbit component exits the region $F$, in which case a
shortest exit path provides the required finite configuration, or the
entire component remains inside $F$, in which case the nontrivial
relation produces a cycle.  The cycle, together with a path from the
given point to it, again yields the finite configuration required by
Proposition~\ref{prop:finite-cores}.

\begin{lemma}
\label{lem:cycle-exit}
Let $\Pi$ act on a set $Y$.  Suppose that $F\subseteq Y$ is
$P$-invariant, that $P$ acts freely on $F$ and fixes
$Y\setminus F$ pointwise, and that $Q$ acts freely on $Y$.
Consider the bipartite multigraph
\[
   L=F/P,\qquad R=Y/Q,\qquad E=F,
\]
with
\[
   s(y)=Py,\qquad r(y)=Qy.
\]
If $x\in F$ is fixed by some nonidentity $w\in\Pi$, then there
exist a finite set $S\subseteq L$, with $Px\in S$, and edges
$
   \{e_V\}_{V\in S}\subset F$
such that
\begin{enumerate}[label=\textup{(\roman*)}]
\item $s(e_V)=V$ for every $V\in S$, and the right endpoints
      $r(e_V)$, $V\in S$, are pairwise distinct;
\item if $e\in F$ satisfies
    $
         r(e)=r(e_V)
      $
      for some $V\in S$, then $s(e)\in S$.
\end{enumerate}
\end{lemma}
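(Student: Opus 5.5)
The plan is to work inside the $\Pi$-orbit of $x$ and to view the multigraph there as a graph whose vertices are the left vertices $P$-orbits. A right vertex $Qy=\{y,qy\}$ with both points in $F$ becomes an edge joining $s(y)$ and $s(qy)$; this may be a loop if $qy\in py\cup p^2y$, and distinct $Q$-orbits may give parallel edges. A right vertex with $qy\notin F$ becomes a ``dangling'' half-edge, an \emph{exit}. Here $q$ denotes the action of $b$ and $p$ that of $a$. Each left vertex has exactly three incident half-edges, namely its three points, since $P$ acts freely on $F$. The required data $\{e_V\}$ amount to an orientation: each $V\in S$ chooses one of its points $e_V$. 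Condition (i) says no right vertex is chosen twice. Condition (ii) says that whenever $e_V$ has $q e_V\in F$, the $P$-orbit of $q e_V$ also lies in $S$. I split into two cases according to whether the component of $Px$ in this graph contains an exit.

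\emph{Case 1: an exit is reachable.} Take a shortest sequence $V_0=Px,V_1,\dots,V_k$ of left vertices with points $y_i\in V_i$ such that $qy_i\in V_{i+1}$ for $i<k$, and $qy_k\notin F$. Put $S=\{V_0,\dots,V_k\}$ and $e_{V_i}=y_i$. Minimality makes the $V_i$ distinct. Hence the right vertices $Qy_i$ are distinct: $Qy_i=Qy_j$ with $i<j$ would force $\{V_i,V_{i+1}\}=\{V_j,V_{j+1}\}$, or $V_j=V_k$ containing an exit, and either contradicts shortestness. For (ii), the right vertex $Qy_i$ with $i<k$ has edges $y_i$ and $qy_i$, with sources $V_i,V_{i+1}\in S$. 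The right vertex $Qy_k$ has the single edge $y_k$, since $qy_k\notin F$.

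\emph{Case 2: no exit.} Then the whole $\Pi$-orbit of $x$ lies in $F$, every right vertex there is an honest edge, and the graph is $3$-regular. Here I use $w$. Conjugating $w$ by an element of $\Pi$ (and replacing $x$ by the corresponding translate, which lies in the same component) I may assume $w$ is cyclically reduced, alternating $b$ with $a^{\pm1}$, and not a single letter of $P$, since $P$ acts freely on $F$; it is also not the letter $b$, since $Q$ acts freely on $Y$. Reading $w$ from right to left, starting at the fixed point, gives a closed walk in the left-vertex graph. A letter $b$ traverses the edge $\{z,qz\}$, and a letter $a^{\pm1}$ moves to a \emph{different} point of the same $P$-orbit. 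Consequently consecutive traversed edges are distinct half-edge pairs, so the walk is non-backtracking. A closed non-backtracking walk of positive length contains a cycle $W_0,W_1,\dots,W_m=W_0$ in the multigraph sense, with loops ($m=1$) and pairs of parallel edges ($m=2$) allowed.

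Still in Case 2, I take a shortest path $V_0=Px,\dots,V_k=W_0$ from $Px$ to this cycle; if $Px$ lies on the cycle, $k=0$. Let $S$ be the union of the path and the cycle. I orient the path toward the cycle, letting $e_{V_i}$ be the point of $V_i$ lying on the edge to $V_{i+1}$. I orient the cycle cyclically, letting $e_{W_j}$ be the point of $W_j$ lying on the cycle edge to $W_{j+1}$. In the loop case $e_{W_0}=y$, where $Qy=\{y,qy\}\subseteq W_0$. Each chosen right vertex is a distinct edge of this unicyclic subgraph, which gives (i). Both endpoints of every such edge lie in $S$, which gives (ii).

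The main obstacle is the translation in Case 2 from the fixed point to a genuine cycle. One must check carefully that the reduced, cyclically reduced form of $w$ yields a non-backtracking closed walk; this needs freeness of $P$ on $F$ to guarantee that $a^{\pm1}$ changes the point. One must also check that the degenerate loop and parallel-edge configurations are correctly counted as cycles, so that the orientation still assigns distinct right vertices.
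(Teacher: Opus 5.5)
Your proposal is correct and follows the paper's proof: the same split into a component that reaches $Y\setminus F$ (handled by a shortest exit path) versus a component inside $F$ (the relation $wx=x$ gives a cycle, then a shortest path to the cycle with a cyclic orientation). The only cosmetic differences are that you conjugate $w$ to a cyclically reduced word and read a non-backtracking closed walk directly in the graph on $F/P$, while the paper keeps a merely reduced word and works in a subdivided graph, excluding trees by rooting at $x_0$.
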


\begin{proof}
Consider first the larger bipartite graph
$\widetilde{\mathcal B}$ with left vertex set $Y/P$, right vertex
set $Y/Q$, and edge set $Y$, where $y\in Y$ joins $Py$ to
$Qy$.

Since $Q$ acts freely, every right vertex $Qy$ is incident to
exactly the two edges $y$ and $by$.  We suppress each such
degree-two right vertex: the two-edge path
\[
   Py \;-\; Qy \;-\; P(by)
\]
is replaced by a single edge, labelled $Qy$, joining $Py$ to
$P(by)$.  In this way we obtain a multigraph $\mathcal H$ with
\[
   V(\mathcal H)=Y/P,
   \qquad
   E(\mathcal H)=Y/Q.
\]
If $Py=P(by)$ the resulting edge is a loop, and distinct
$Q$-orbits may give parallel edges.

The connected component\footnote{The connected
component of a vertex is the set of all vertices that can be joined to
it by a finite path.} of $Px$ in $\mathcal H$ is precisely
\[
   \{P(gx):g\in\Pi\}.
\]

We distinguish two cases.

\smallskip
\noindent
\textbf{Case 1. The component of $Px$ meets $(Y\setminus F)/P$.}
Choose a shortest path
\[
   V_0=Px,V_1,\ldots,V_m,
\]
with
\[
   V_0,\ldots,V_{m-1}\in F/P
   \qquad\text{and}\qquad
   V_m\notin F/P,
\]
and let $\eta_j$ be the edge joining $V_j$ to $V_{j+1}$.
Set
\[
   S:=\{V_0,\ldots,V_{m-1}\}.
\]

For each $j<m$, regard $\eta_j\in Y/Q$ as the corresponding
two-point $Q$-orbit.  Since $\eta_j$ joins the distinct vertices
$V_j$ and $V_{j+1}$, there is a unique point
\[
   e_{V_j}\in \eta_j\cap V_j.
\]
As $V_j\subseteq F$, we have $e_{V_j}\in F$, and in the original
bipartite graph
\[
   s(e_{V_j})=V_j,\qquad r(e_{V_j})=\eta_j.
\]
Since the path edges $\eta_j$ are distinct, the right endpoints
$r(e_{V_j})$ are pairwise distinct. Thus (i) holds.

It remains to verify condition \textup{(ii)}.
Let $e\in F$ satisfy
$
   r(e)=r(e_{V_j})=\eta_j
$
for some $j<m$.  Then $e$ belongs to the $Q$-orbit
$\eta_j$, so its left endpoint $s(e)=Pe$ is one of the two
$P$-endpoints of $\eta_j$, namely $V_j$ and $V_{j+1}$.
If $j<m-1$, both vertices belong to $S$.  If $j=m-1$, then
$V_m\notin F/P$, whereas $e\in F$ implies $Pe\in F/P$.
Hence in this case $s(e)=V_{m-1}\in S$.  Thus condition
\textup{(ii)} holds.

\smallskip
\noindent
\textbf{Case 2. The component of $Px$ is contained in $F/P$.}
Then every point of $\Pi x$ lies in $F$, so both $P$ and $Q$
act freely on $\Pi x$.

Choose a nonidentity element $w\in\Pi$ with $wx=x$, and write
$
   w=\sigma_n\cdots\sigma_1
$
in reduced form.  Put
\[
   x_0=x,\qquad
   x_i=\sigma_i x_{i-1}\quad(1\le i\le n).
\]
Thus $x_n=x_0$.

Pass to the subdivision of $\widetilde{\mathcal B}$ obtained by
inserting one new vertex in the interior of each edge. More precisely,
the edge labelled by $y\in Y$, joining $Py$ to $Qy$, is replaced by
the two-edge path
\[
   Py\;-\;y\;-\;Qy,
\]
where the new subdivision vertex is again denoted by $y$.

Thus the vertices of the subdivided graph consist of the orbit
vertices in $Y/P$ and $Y/Q$, together with one subdivision vertex
for each point of $Y$. For $1\le i\le n$, set
\[
   c_i:=
   \begin{cases}
      Px_{i-1}=Px_i, & \text{if }\sigma_i\in P,\\
      Qx_{i-1}=Qx_i, & \text{if }\sigma_i\in Q.
   \end{cases}
\]
Then $c_i$ is a common orbit vertex adjacent to both $x_{i-1}$ and
$x_i$, so
\[
   x_{i-1},c_i,x_i
\]
is a two-edge path in the subdivided graph. Since $x_n=x_0$,
concatenating these paths gives the nonempty closed walk
\[
   x_0,c_1,x_1,c_2,\ldots,c_n,x_n=x_0.
\]

This walk has no immediate reversal at an internal vertex.  At
$c_i$, the neighbouring vertices $x_{i-1}$ and $x_i$ are
distinct. Indeed,
$
   x_i=\sigma_i x_{i-1},
$
with $\sigma_i\neq1$, and the corresponding factor acts freely.
At $x_i$, $1\le i<n$, the neighbouring vertices are $c_i$ and
$c_{i+1}$.  Since $w$ is reduced, $\sigma_i$ and
$\sigma_{i+1}$ belong to different factors, so one of
$c_i,c_{i+1}$ is a $P$-vertex and the other is a $Q$-vertex.
Hence they are distinct.

On the other hand, a nonempty closed walk in a tree must have an immediate reversal at
some internal occurrence.  Indeed, root the tree at $x_0$ and
choose an occurrence in the walk of a vertex at maximal positive
distance from $x_0$.  The two adjacent vertices in the walk must
both be the unique neighbour of this vertex lying closer to $x_0$,
and hence they coincide.  Therefore this component is not a tree and
contains a cycle.  Undoing the subdivision gives a cycle in
$\widetilde{\mathcal B}$.  Suppressing its $Q$-vertices then gives
a cycle $\mathcal C$ in $\mathcal H$, possibly a loop or a pair
of parallel edges.

Orient $\mathcal C$ and select one outgoing edge at each vertex of
$\mathcal C$.  If $Px\notin V(\mathcal C)$, choose a shortest path
\[
   W_0=Px,W_1,\ldots,W_t,
   \qquad W_t\in V(\mathcal C),
\]
from $Px$ to $\mathcal C$.  Thus
$W_0,\ldots,W_{t-1}\notin V(\mathcal C)$.  For each $j<t$, select
the edge joining $W_j$ to $W_{j+1}$.  Put
\[
   S:=
   V(\mathcal C)\cup\{W_0,\ldots,W_{t-1}\},
\]
where the second set is omitted if $Px\in V(\mathcal C)$.

Thus each $V\in S$ has a selected edge $\eta_V\in Y/Q$ of
$\mathcal H$.  These selected edges are pairwise distinct, and both
endpoints of every $\eta_V$ belong to $S$.  Since the whole
component lies in $F/P$, every $V\in S$ is contained in $F$.
Choose
$
   e_V\in \eta_V\cap V.
$
Then $e_V\in F$, and in the original bipartite graph
\[
   s(e_V)=V
   \qquad\text{and}\qquad
   r(e_V)=\eta_V.
\]
Since the $\eta_V$ are pairwise distinct, so are the right endpoints
$r(e_V)$.

Finally, suppose that $e\in F$ satisfies
$
   r(e)=r(e_V)=\eta_V
$
for some $V\in S$.  Then $e$ and $e_V$ belong to the same
$Q$-orbit, so $Pe$ is one of the endpoints of the edge
$\eta_V$ in $\mathcal H$.  Both endpoints of $\eta_V$ belong
to $S$, and hence
$
   s(e)=Pe\in S.
$
This proves the required conditions.
\end{proof}

\section{Matching for finite-order bisections}
\label{sec:bisection-matching}
The preceding section reduced the required selection to a clopen
matching problem for the orbit-incidence graph associated with
$P=C_3$ and $Q=C_2$, and developed the combinatorial tools needed
to solve it. We now realize this matching problem for finite-order
bisections arising from an amenable group action. This provides the
bridge between the abstract clopen matching argument and the
three-to-two comparison theorem.

Keep the notation $P,Q$ and $\Pi=P*Q$ from
Subsection~\ref{subsec:free-product}. The proof naturally separates
into two parts. On one part of the induced $\Pi$-action, topological
amenability provides the required clopen transversal. On the
complementary part, nontrivial relations in $\Pi$ yield the finite
configurations supplied by Lemma~\ref{lem:cycle-exit} and required by
Proposition~\ref{prop:finite-cores}. We begin with the topologically
amenable case.

\subsection{A clopen transversal for amenable
$C_3*C_2$-actions}
\label{subsec:reiter-transversal}

The following lemma solves the required selection problem for an
arbitrary topologically amenable $\Pi$-action.

\begin{lemma}\label{lem:reiter-transversal}
Let $\Pi$ act topologically amenably on a compact zero-dimensional
space $Z$, and suppose that the restricted $P$-action is free.
Then there exists a clopen set $C\subseteq Z$ such that
\[
   Z=C\sqcup aC\sqcup a^2C,
   \qquad
   C\cap bC=\varnothing.
\]
\end{lemma}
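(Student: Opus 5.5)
The plan is to reduce the lemma to a finite Hall-type problem, solved uniformly on the pieces of a clopen decomposition into finite bounded blocks supplied by topological amenability. Throughout I work with the orbit-incidence multigraph of Lemma~\ref{lem:finite-sections}, adapted to the present setting: left vertices $Z/P$ (all $P$-orbits have three points), right vertices $Z/Q$, and edges the points of $Z$. The required $C$ is exactly a clopen left-perfect matching. Right vertices have degree $1$ (a $b$-fixed point) or $2$, while left vertices have degree $3$.

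The slack is clear: the fractional matching giving weight $1/3$ to every point saturates each left vertex and loads each right vertex by at most $2/3$. The difficulty is purely that the graph may be infinite, for instance tree-like when $\Pi$ acts freely. In that case Proposition~\ref{prop:finite-cores} is unavailable, since no finite closed configuration as in Lemma~\ref{lem:cycle-exit} exists. Topological amenability must replace it.

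\textbf{Step 1: bounded clopen blocks from amenability.} Using the definition of topological amenability, pick a finite set $A\subseteq\Pi$ containing $a,b$ and a small $\varepsilon$. Obtain a continuous $z\mapsto m_z\in\Prob(\Pi)$ with $\sup_z\|m_{hz}-hm_z\|_1<\varepsilon$ for $h\in A$. By zero-dimensionality and compactness, replace it by a locally constant, finitely supported, rational-valued map, averaged over $P$ so that it is $P$-equivariant. From $m$ I would build, by the standard ``random threshold / level set'' argument made deterministic by finitely many clopen choices, a clopen equivalence relation on $Z$. Its classes are finite unions of $P$-orbits of uniformly bounded size, and the set of $Q$-orbits split between two classes (``cut edges'' in the graph $\mathcal H$ of Lemma~\ref{lem:cycle-exit}) is small.

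\textbf{Step 2: sparse cut.} I would need more than smallness of the cut. I want every $P$-orbit to be incident to at most one cut $Q$-orbit. To arrange this, I would repeat Step~1 finitely many times with increasing parameters and merge blocks along offending cut edges. Alternatively, I would choose the thresholds so that cut edges form a matching in $\mathcal H$; the degree of $\mathcal H$ is bounded by $3$, so this is a bounded-degree local colouring problem with clopen data.

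\textbf{Step 3: orientation and local Hall.} Assign each cut $Q$-orbit to one of its two blocks, discarding its edge in the other block. Then every left vertex retains at least $2$ of its $3$ edges inside its own block, and every right vertex has at most $2$ edges. In each finite block, any set $S$ of left vertices therefore sends at least $2|S|$ edges into its neighbourhood, which can absorb at most $2|N(S)|$. Hence $|N(S)|\ge|S|$, and Hall's theorem gives a left-perfect matching of the block.

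\textbf{Clopenness of the selection.} Since blocks have bounded size and the data are locally constant, only finitely many isomorphism types of marked blocks occur. Fixing one matching per type gives a clopen rule, and the resulting $C$ satisfies $Z=C\sqcup aC\sqcup a^2C$ and $C\cap bC=\varnothing$.

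\textbf{Main obstacle.} I expect Step~2 to be the hard part: producing, from Reiter functions, finite clopen blocks whose cut is not merely small on average but locally sparse, so that each $P$-orbit loses at most one edge. If this fails, I would instead run Lemma~\ref{lem:shortest-augmentations} inside the blocks, using the $2/3$ load slack to push unmatched vertices across boundaries.
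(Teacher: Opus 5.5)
There is a genuine gap in Steps 1--2, and it is structural rather than technical. On free $\Pi$-orbits the blocks you ask for do not exist. The hypotheses do not exclude such orbits, and you yourself flag them as the tree-like case. There $\mathcal H$ is the $3$-regular tree. Since $\Pi$ is nonamenable, topological amenability of the action supplies Reiter functions but does not make the orbit graphs amenable.

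Let $K$ be any finite set of $P$-orbits inside a free orbit. At most $|K|-1$ edges of $\mathcal H$ have both endpoints in $K$, so $K$ has at least $3|K|-2(|K|-1)=|K|+2$ incidences with cut $Q$-orbits. Two consequences follow, however the blocks and thresholds are chosen:
\begin{itemize}
\item the cut is never small relative to block size;
\item every block contains a $P$-orbit meeting at least two cut $Q$-orbits.
\end{itemize}
So the goal of Step~2 is unattainable, and the Hall count of Step~3 has no input.

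Weakening Step~2 to an orientation of the cut in which each $P$-orbit loses at most one edge does not rescue the approach. With singleton blocks, producing such an orientation clopenly already implies the lemma. In general it is again an orientation problem on the infinite tree of blocks: each block $K$ must be assigned at least $|K|-e(K)\ge 1$ of its cut edges, where $e(K)$ is the number of internal edges. This is a clopen choice of directions to infinity, exactly what must be extracted from amenability, and the proposal gives no mechanism for it.

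The fallback via Lemma~\ref{lem:shortest-augmentations} fails for the same reason. Augmentation yields left-perfectness only together with a uniform bound on augmenting-path lengths, as in Proposition~\ref{prop:finite-cores}. On the $3$-regular tree, consider the matching in which every vertex other than a fixed $x$ takes its edge pointing towards $x$. It saturates every right vertex, leaves $x$ unmatched, and admits no augmenting path at all.

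The missing idea is to use the Reiter functions directly rather than convert them into finite blocks. The paper does this as follows.
\begin{itemize}
\item It puts $\rho_z(g)=m_z(g^{-1})$, with error $\eta=1/20$ for $h\in P\cup Q$.
\item It averages over $P$, as you also propose. This gives $\overline\rho_{hz}=\overline\rho_z\cdot h^{-1}$ exactly for $h\in P$, and $\|\overline\rho_{bz}-\overline\rho_z\cdot b^{-1}\|_1<3\eta$.
\item It takes $B_1$ to be the identity together with the reduced words whose rightmost letter is $b$. Then $\Pi=\bigsqcup_{a_0\in P}B_1a_0$ and $\Pi=B_1\cup B_1b$.
\item The continuous function $f(z)=\overline\rho_z(B_1)$ therefore satisfies $\sum_{a_0\in P}f(a_0z)=1$ exactly and $f(z)+f(bz)>1-3\eta$.
\end{itemize}
Using a clopen $P$-fundamental domain, choose in each $P$-orbit one point with $f<1/3+\eta$; these points form $C$. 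If $y,by\in C$, including the case $y=by$, then $1<2/3+5\eta$, which is false.

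Such an $f$ cannot exist on a finite orbit on which $P$ and $Q$ act freely. With $n$ the number of $P$-orbits, the $P$-sums give total mass $n$, while the $Q$-sums give at least $\tfrac32 n(1-3\eta)>n$. In this sense $f$ is a smoothed version of letting each $P$-orbit point towards an end of the tree. That nonlocal information is precisely what a decomposition into bounded finite blocks discards.
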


\begin{proof}
Assume $Z\neq\varnothing$, and set $\eta=1/20$.
By topological amenability, there is a norm-continuous map
$z\mapsto m_z\in\Prob(\Pi)$ such that
\begin{equation}\label{eq:reiter-error}
   \sup_{z\in Z}\|m_{hz}-hm_z\|_1<\eta
   \qquad(h\in P\cup Q).
\end{equation}

For $\rho\in\ell^1(\Pi)$ and $h\in\Pi$, write
\[
   (\rho\cdot h)(g):=\rho(gh^{-1}),
\]
and define
\[
   \rho_z(g):=m_z(g^{-1}).
\]

Then by \eqref{eq:reiter-error},
\begin{equation}\label{eq:right-reiter-error}
   \|\rho_{hz}-\rho_z\cdot h^{-1}\|_1<\eta
   \qquad(h\in P\cup Q).
\end{equation}
Define
\[
   \overline\rho_z
   :=\frac13\sum_{a_0\in P}\rho_{a_0z}\cdot a_0.
\]
For $h\in P$,
\begin{equation}\label{eq:exact-P-covariance}
   \overline\rho_{hz}
   =\overline\rho_z\cdot h^{-1}.
\end{equation}
Moreover, by \eqref{eq:right-reiter-error},
\begin{equation*}
   \|\overline\rho_z-\rho_z\|_1<\eta,
\end{equation*}
and hence
\begin{equation}\label{eq:b-error}
   \|\overline\rho_{bz}-\overline\rho_z\cdot b^{-1}\|_1<3\eta.
\end{equation}

Let
\[
   B_1
   :=
   \{1_\Pi\}
   \cup
   \left\{
      g\in\Pi\setminus\{1_\Pi\}:
      g=\sigma_n\cdots\sigma_1
      \text{ in reduced form and }\sigma_1=b
   \right\}.
\]
Every $g\in\Pi$ has a unique decomposition
\[
   g=ua_0,
   \qquad
   u\in B_1,\quad a_0\in P,
\]
so that
\begin{equation}\label{eq:partition-free-product}
   \Pi=\bigsqcup_{a_0\in P}B_1a_0,
\end{equation}
and
\begin{equation}\label{eq:two-cover-free-product}
   B_1\cup B_1b=\Pi.
\end{equation}

Define
\[
   f(z):=\overline\rho_z(B_1).
\]
Since $z\mapsto\overline\rho_z$ is norm-continuous and
\[
   |\rho(B_1)-\sigma(B_1)|
   \leq \|\rho-\sigma\|_1
   \qquad (\rho,\sigma\in\Prob(\Pi)),
\]
the function $f:Z\to[0,1]$ is continuous.

For $a_0\in P$, \eqref{eq:exact-P-covariance} and the definition of
right translation give
\[
   f(a_0z)
   =\overline\rho_{a_0z}(B_1)
   =(\overline\rho_z\cdot a_0^{-1})(B_1)
   =\overline\rho_z(B_1a_0).
\]
Hence, by \eqref{eq:partition-free-product},
\begin{equation}\label{eq:reiter-orbit-sum}
   \sum_{a_0\in P}f(a_0z)
   =
   \sum_{a_0\in P}\overline\rho_z(B_1a_0)
   =
   \overline\rho_z(\Pi)
   =
   1\qquad\text{for all }z\in Z.
\end{equation}
Similarly, \eqref{eq:b-error} gives
\[
   \left|
      f(bz)-(\overline\rho_z\cdot b^{-1})(B_1)
   \right|
   <3\eta,
\]
and therefore
\[
   \overline\rho_z(B_1b)<f(bz)+3\eta.
\]
Using \eqref{eq:two-cover-free-product}, we obtain
\begin{equation}\label{eq:reiter-pair-sum}
   1
   =\overline\rho_z(\Pi)
   \leq
   \overline\rho_z(B_1)+\overline\rho_z(B_1b)
   <
   f(z)+f(bz)+3\eta.
\end{equation}

Choose a clopen $P$-fundamental domain $T\subseteq Z$ by
Lemma~\ref{lem:finite-group-domain}.  For $a_0\in P$, set
\[
   O_{a_0}
   :=\{t\in T:f(a_0t)<1/3+\eta\}.
\]
By \eqref{eq:reiter-orbit-sum}, 
\[
   T=\bigcup_{p\in P}O_p.
\]
Since $T$ is compact and zero-dimensional, there is a clopen
partition
\[
   T=\bigsqcup_{a_0\in P}T_{a_0}
   \qquad\text{with}\qquad
   T_{a_0}\subseteq O_{a_0}.
\]
Set
\[
   C:=\bigsqcup_{a_0\in P}a_0T_{a_0}.
\]
Then $C$ is clopen and meets every $P$-orbit exactly once, so
\[
   Z=C\sqcup aC\sqcup a^2C.
\]
Moreover,
\[
   f(y)<1/3+\eta
   \qquad(y\in C).
\]
If $y,by\in C$, then \eqref{eq:reiter-pair-sum} gives
\[
   1
   <f(y)+f(by)+3\eta
   <\frac23+5\eta
   =\frac{11}{12},
\]
a contradiction.  Hence $C\cap bC=\varnothing$.
\end{proof}

\subsection{The bisection matching theorem}
\label{subsec:bisection-matching-theorem}

We now combine Lemma~\ref{lem:reiter-transversal} with
Lemma~\ref{lem:cycle-exit} and
Proposition~\ref{prop:finite-cores} to obtain the required selection
result for finite-order bisections.  The bisections of orders three
and two induce an action of
$
   \Pi=C_3*C_2
$
on the underlying space.  The proof separates this action into a part
where topological amenability gives the required clopen transversal
and a complementary part where nontrivial relations provide the finite
configurations needed for the matching argument.

\begin{theorem}
\label{thm:bisection-matching}
Let a countably infinite discrete amenable group $K$ act continuously
on a compact metrizable zero-dimensional space $V$, and let
$Y\subseteq V$ be clopen.  Let $p,q$ be compact-open full
bisections of $(K\ltimes V)|_Y$ satisfying
$
   p^3=1_Y,$ $q^2=1_Y,$
where
$
   1_Y=\{(1_K,y):y\in Y\}.
$
Let
\[
   \theta_p=r\circ(s|_p)^{-1}
   \qquad\text{and}\qquad
   \theta_q=r\circ(s|_q)^{-1}
\]
be the homeomorphisms of $Y$ induced by $p$ and $q$.
Suppose that $F\subseteq Y$ is clopen and $\theta_p$-invariant,
every $\theta_p$-orbit in $F$ has three points,
\[
   p\cap s^{-1}(Y\setminus F)=1_{Y\setminus F},
\]
and $\theta_q$ has no fixed point.  Then there exists a clopen set
$C\subseteq F$ such that
\[
   F=C\sqcup\theta_p(C)\sqcup\theta_p^2(C),
   \qquad
   C\cap\theta_q(C)=\varnothing.
\]
\end{theorem}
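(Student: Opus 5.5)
The approach is to realize the bisections $p,q$ as a $\Pi$-action on $Y$ carrying a $K$-valued cocycle. We then split $Y$ into a clopen topologically amenable part, handled by Lemma~\ref{lem:reiter-transversal}, and a part where some nontrivial element of $\Pi$ has fixed points, handled by Lemma~\ref{lem:cycle-exit} together with Proposition~\ref{prop:finite-cores}.

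\textbf{The action and cocycle.} Put $a\mapsto\theta_p$ and $b\mapsto\theta_q$. This is a $\Pi$-action on $Y$ since $p^3=1_Y$ and $q^2=1_Y$. Each arrow of $p$ or $q$ has a $K$-label, and these labels depend continuously (locally constantly) on the source point. Multiplying labels along reduced words gives a continuous cocycle
\[
c:\Pi\times Y\to K,
\]
that is, a groupoid homomorphism $\Pi\ltimes Y\to K\ltimes V$ followed by projection to $K$. The identities $p^3=1_Y$ and $q^2=1_Y$, as bisection identities, are exactly what make $c$ well defined on $\Pi$ rather than on the free group.

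\textbf{The decomposition.} Let $Z_0$ be the set of $y\in Y$ for which there is $w\neq1$ with $c(w,y)=1_K$. If $c(w,y)=1_K$, then $\theta_w(y)=y$, because the bisection $w$ then induces the same arrow as a unit. I would next like a clopen $\Pi$-invariant set $Z\subseteq Y\setminus Z_0$ on which \eqref{eq:hyper} holds, with the complement handled combinatorially.

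\textbf{Main obstacle.} The complement of $Z$ need not be clopen, since $Z_0$ is only a countable union of clopen sets. I would avoid any decomposition and instead work directly with the matching graph of Lemma~\ref{lem:finite-sections} on all of $F$. The steps are as follows.
\begin{enumerate}[label=\textup{(\arabic*)}]
\item Restrict to the closed $\Pi$-invariant set $Y_1:=Y\setminus\Pi Z_0$, where $\Pi Z_0$ is open and invariant. On $Y_1$ the cocycle satisfies \eqref{eq:hyper}, so Lemma~\ref{lem:cocycle-criterion} gives topological amenability; metrizability of $V$ is needed here.
\item The set $Y_1$ is only closed, so I extend a Reiter function from $Y_1$ to a clopen neighbourhood. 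The inequalities \eqref{eq:reiter-error} are open conditions, so a continuous $z\mapsto m_z$ on $Y$ satisfying them for $h\in P\cup Q$ on $Y_1$ (extended via Tietze in $\ell^1$ with finite support) satisfies them on a clopen neighbourhood $U$ of $Y_1$.
\item The proof of Lemma~\ref{lem:reiter-transversal} is local: it uses only \eqref{eq:reiter-error} at the points $z$ and $hz$. Running it on the $P$-invariant clopen set $U\cap F$, after shrinking $U$ so that it is $P$-invariant and $b$-errors are controlled on $U\cap bU$, yields a clopen $C_0\subseteq U\cap F$ that is a $p$-transversal there and satisfies $C_0\cap\theta_qC_0=\varnothing$.
\item This $C_0$ is a clopen matching $E_{M_0}$ in the graph of Lemma~\ref{lem:finite-sections}. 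Here the points of $Y\setminus F$ are fixed by $\theta_p$, with trivial labels by the hypothesis on $p$, and $\theta_q$ is fixed-point free, so that $q$-orbits have two points.
\end{enumerate}

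\textbf{Finishing via Proposition~\ref{prop:finite-cores}.} The unmatched left vertices are orbits $Px$ with $x\in F\setminus U\subseteq \Pi Z_0$. Such an $x$ is fixed by some nonidentity $w$ (conjugating a witness in $Z_0$). Lemma~\ref{lem:cycle-exit}, applied to the set action, provides the finite data $S_x$ and edges $e_z$ required by Proposition~\ref{prop:finite-cores}. That proposition then gives a clopen left-perfect matching $C$, and the ``moreover'' part of Lemma~\ref{lem:finite-sections} converts it to
\[
F=C\sqcup\theta_pC\sqcup\theta_p^2C,\qquad C\cap\theta_qC=\varnothing.
\]

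\textbf{Expected hardest step.} The hardest step is (2)--(3), making the Reiter estimates hold on a clopen neighbourhood compatible with the $P$-averaging. If this fails, I would instead take $U$ to be the complement of a finite union of the clopen sets $\{y:c(w,y)=1,\ \theta_w y=y\}$ for $w$ of bounded length. Compactness then shows that finitely many such sets suffice for the required error bound.
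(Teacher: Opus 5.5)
Your proposal is correct, and its overall architecture matches the paper's:
\begin{itemize}
\item the $\Pi$-action and $K$-valued cocycle obtained from the group of compact-open full bisections;
\item the closed $\Pi$-invariant set on which the cocycle has trivial pointwise kernel, which is topologically amenable by Lemma~\ref{lem:cocycle-criterion}. Your $Y_1$ is exactly the paper's $Z$, since $Z_0$ is already invariant: the pointwise kernels $\{w:c(w,y)=1_K\}$ are conjugated along orbits;
\item the finish through Lemma~\ref{lem:cycle-exit} and Proposition~\ref{prop:finite-cores} for the orbits with a nontrivial stabilizer.
\end{itemize}

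You differ only at the step you flagged as hardest. You extend the Reiter map to all of $Y$, pass to a clopen $P$-invariant neighbourhood $U$ of $Y_1$ on which the finitely many Reiter bounds still hold, and re-run the proof of Lemma~\ref{lem:reiter-transversal} on $U\cap F$. This is valid for two reasons. The bounds define an open set containing the compact set $Y_1$. Every estimate in that proof is also pointwise: the contradiction for $y,\theta_q y\in C$ uses only the bounds at $y$ and $\theta_q y$, both of which lie in $C\subseteq U$.

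The paper sidesteps this, because closedness of $Z$ is not an obstacle. $Z$ is itself a compact zero-dimensional space with a topologically amenable $\Pi$-action, so Lemma~\ref{lem:reiter-transversal} applies to it as a black box and yields a relatively clopen $C_Z\subseteq Z$. One then extends $C_Z$ to a clopen $C'\subseteq F$ and trims:
\[
C_0:=C'\setminus(\theta_pC'\cup\theta_p^2C'\cup\theta_qC').
\]
This is automatically a clopen matching, and by $\Pi$-invariance of $Z$ it still satisfies $C_0\cap Z=C_Z$.

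The paper's route is more modular: it needs no extension of $\ell^1$-valued maps and no re-inspection of the Reiter argument. Yours produces a matching covering a whole clopen neighbourhood of $Z$. That is harmless but gains nothing, since Proposition~\ref{prop:finite-cores} absorbs all remaining left vertices anyway. Your fallback via finitely many sets $\{y:c(w,y)=1_K\}$ is therefore unnecessary.
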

\begin{proof}
If $F=\varnothing$, there is nothing to prove.  Assume
$F\neq\varnothing$.

Recall that the compact-open full bisections of
$(K\ltimes V)|_Y$ form a group $ \operatorname{Bis}_c((K\ltimes V)|_Y)$ under bisection multiplication. 
The identities
$
   p^3=1_Y,
$ and $
   q^2=1_Y
$
define homomorphisms from $P$ and $Q$, respectively, into this
group, sending $a$ to $p$ and $b$ to $q$.  By the universal
property of the free product, these extend uniquely to a homomorphism
\[
   \Phi:\Pi=P*Q
   \to \operatorname{Bis}_c((K\ltimes V)|_Y).
\]
For $w\in\Pi$, write
$
   B_w:=\Phi(w).
$
Thus
\[
   B_a=p,\qquad B_b=q,
   \qquad B_{uv}=B_uB_v.
\]

Since $B_w$ is a full bisection, for every $y\in Y$, there is a unique arrow of $B_w$ with
source $y$.  Write this arrow in transformation-groupoid notation as
\[
   (\kappa(w,y),y)\in B_w,
   \qquad \kappa(w,y)\in K,
\]
and its range is
\[
  wy:= r(\kappa(w,y),y)=\kappa(w,y)y.
\]
Since $B_{uv}=B_uB_v$, these maps define an action of $\Pi$ on
$Y$, and composition of arrows gives
\begin{equation}\label{eq:kappa-cocycle}
   \kappa(uv,y)
   =
   \kappa(u,vy)\kappa(v,y)
   \qquad(u,v\in\Pi,\ y\in Y).
\end{equation}
In particular,
\[
   ay=\theta_p(y)\qquad\text{and}\qquad
   by=\theta_q(y).
\]

For each fixed $w\in\Pi$, the map
$$
   y\mapsto(\kappa(w,y),y)
   =(s|_{B_w})^{-1}(y)
$$
is continuous. Since $K$ is discrete, 
$
   \kappa(w,\cdot):Y\to K
$
is locally constant.  As $\Pi$ is discrete and $\kappa:\Pi\times Y\to K$ satisfies \eqref{eq:kappa-cocycle},  $\kappa$
is a continuous cocycle.

For $y\in Y$, define the pointwise kernel
\[
   \Lambda_y
   :=
   \{w\in\Pi:\kappa(w,y)=1_K\}.
\]
If $w\in\Lambda_y$, then
$
   wy=\kappa(w,y)y=y.
$
The cocycle identity shows that $\Lambda_y$ is a subgroup of
$\Pi$.  It also gives
\begin{equation}\label{eq:kernel-conjugacy}
   \Lambda_{vy}=v\Lambda_yv^{-1}
   \qquad(v\in\Pi).
\end{equation}
Indeed, if $u\in\Lambda_y$, then $uy=y$, and
\[
   \kappa(vuv^{-1},vy)
   =
   \kappa(v,uy)\,
   \kappa(u,y)\,
   \kappa(v^{-1},vy)  =
   \kappa(v,y)\,
   \kappa(v^{-1},vy)
   =
   1_K.
\]
Thus
$
   v\Lambda_yv^{-1}\subseteq\Lambda_{vy},
$
and the reverse inclusion follows by applying the same argument to
$v^{-1}$.

We now separate the points according to whether the cocycle has a
nontrivial pointwise kernel.  For $w\neq1_\Pi$, put
\[
   E_w
   :=
   \{y\in Y:\kappa(w,y)=1_K\},
\]
and set
\[
   U:=\bigcup_{w\neq1_\Pi}E_w,
   \qquad
   Z:=Y\setminus U.
\]
Each $E_w$ is clopen, since $\kappa(w,\cdot)$ is locally
constant.  Hence $U$ is open and $Z$ is closed.
By \eqref{eq:kernel-conjugacy}, both $U$ and $Z$ are
$\Pi$-invariant.

If $y\in Y\setminus F$, then $p$ agrees with the unit bisection at
$y$.  Since $B_a=p$, this means
$
   \kappa(a,y)=1_K.
$
As $a\neq1_\Pi$,
$
   Y\setminus F\subseteq E_a\subseteq U,
$
and therefore
\begin{equation}\label{eq:Z-contained-F}
   Z\subseteq F.
\end{equation}

By definition of $Z$,
\begin{equation}\label{eq:trivial-kernel-on-Z}
   \kappa(w,z)=1_K
   \quad\Longrightarrow\quad
   w=1_\Pi
   \qquad(w\in\Pi,\ z\in Z).
\end{equation}
If $Z\neq\varnothing$, Lemma~\ref{lem:cocycle-criterion} applied to
the restricted cocycle $\kappa$
shows that the action $\Pi\curvearrowright Z$ is topologically
amenable.  By \eqref{eq:Z-contained-F}, the restricted $P$-action
on $Z$ is free, since every $\theta_p$-orbit in $F$ has three
points.  Lemma~\ref{lem:reiter-transversal} therefore gives a
relatively clopen set $C_Z\subseteq Z$ such that
\begin{equation}\label{eq:CZ-properties}
   Z
   =
   C_Z\sqcup\theta_p(C_Z)\sqcup\theta_p^2(C_Z),
   \qquad
   C_Z\cap\theta_q(C_Z)=\varnothing.
\end{equation}
If $Z=\varnothing$, then
\eqref{eq:CZ-properties} remains valid by letting $C_Z=\varnothing$.

We next extend this matching from $Z$ to a clopen partial matching
on the whole incidence graph.  Since $C_Z$ is clopen in the closed
subspace $Z\subseteq F$, there is a clopen set $C'\subseteq F$ such that
\[
   C'\cap Z=C_Z.
\]
Set
\[
   C_0
   :=
   C'\setminus
   \bigl(
      \theta_p(C')
      \cup\theta_p^2(C')
      \cup\theta_q(C')
   \bigr).
\]
Then
\[
   C_0\cap\theta_p(C_0)
   =
   C_0\cap\theta_p^2(C_0)
   =
   C_0\cap\theta_q(C_0)
   =
   \varnothing.
\]
Applying $\theta_p$ to the first equality also gives
\[
   \theta_p(C_0)\cap\theta_p^2(C_0)=\varnothing.
\]
Hence $C_0$ meets every $P$-orbit and every $Q$-orbit at most
once.
Since $Z$ is $\Pi$-invariant, by \eqref{eq:CZ-properties},
\begin{equation}\label{eq:no shrinking}
       C_0\cap Z
   =
   C_Z\setminus
   \bigl(
      \theta_p(C_Z)
      \cup\theta_p^2(C_Z)
      \cup\theta_q(C_Z)
   \bigr)  =C_Z.
\end{equation}

Consider now the orbit incidence graph from
Lemma~\ref{lem:finite-sections}.  Its left vertex space, right vertex
space, and edge space are
\[
   L=F/P,
   \qquad
   R=Y/Q,
   \qquad
   E=F,
\]
with endpoint maps
\[
   s(x)=Px,
   \qquad
   r(x)=Qx.
\]
Since $C_0$ meets every $P$-orbit and every $Q$-orbit at most
once, the clopen edge set
\[
   E_{M_0}:=C_0
\]
defines a clopen matching $M_0$.  Moreover,   \eqref{eq:CZ-properties}  and \eqref{eq:no shrinking} show that
$M_0$ covers every left vertex represented by a point of $Z$.

It remains to treat the left vertices not covered by $M_0$.
Let $Px\in F/P$ be unmatched, and choose $x\in F$ representing
it.  If $x\in Z$, then the $P$-invariance of $Z$ and
\eqref{eq:CZ-properties} would imply that the orbit $Px$ is already
matched by $M_0$, a contradiction.  Hence $x\notin Z$, so
$x\in U$.  By the definition of $U$, there exists
$w\neq1_\Pi$ such that
$
   \kappa(w,x)=1_K,
$
and hence
\[
   wx=\kappa(w,x)x=x.
\]

The induced $\Pi$-action on $Y$ satisfies the hypotheses of
Lemma~\ref{lem:cycle-exit}.  Indeed, the $P$-action is free on
$F$, it fixes $Y\setminus F$ pointwise as $p$ is the unit
bisection there, and the $Q$-action is free on $Y$ as
$\theta_q$ has no fixed point.  Since $w\neq1_\Pi$ fixes $x$,
Lemma~\ref{lem:cycle-exit} provides, for the unmatched vertex $Px$,
the finite data required by Proposition~\ref{prop:finite-cores}.
  Applying that proposition
gives a clopen left-perfect matching in the orbit incidence graph.
Let $C\subseteq F$ be its edge set.  By
Lemma~\ref{lem:finite-sections},
\[
   F
   =
   C\sqcup\theta_p(C)\sqcup\theta_p^2(C),
   \qquad
   C\cap\theta_q(C)=\varnothing.
\]
This is the required set $C$.
\end{proof}

\section{The three-to-two theorem and algebraic consequences}
\label{sec:three-two}
We now return to the original comparison problem
\[
   3a\leq2b.
\]
The preceding section established the bisection matching theorem,
which provides the clopen selection mechanism needed to pass from
this inequality to $a\leq b$. We first use it to prove
Theorem~\ref{thm:main-three-two}, including its extension to arbitrary
compact zero-dimensional spaces.

We then combine Theorem~\ref{thm:main-three-two} with the clopen
augmentation procedure to prove the  comparison statement
in Theorem~\ref{thm:main-comparison}\,(i) and its
full-group conclusion.  For minimal Cantor actions,   the same
three-to-two law gives weak comparability and hence the algebraic
regularity  in Theorem~  \ref{thm:main-comparison} \textup{(ii)}.
We finally give a nonminimal counterexample to the remaining
structural conclusions.

\subsection{Proof of Theorem~\ref{thm:main-three-two}}
\label{subsec:three-two}

For the metrizable case, the idea is to convert a witness to
$3a\leq2b$ into an order-three bisection $p$ and an order-two
bisection $q$ on two copies of a representative of $b$.
Theorem~\ref{thm:bisection-matching} then provides a clopen set
representing one copy of $a$ which can be moved, without collisions,
into a single copy of $b$.  This gives $a\leq b$.  We then remove
the metrizability assumption by passing to the metrizable
zero-dimensional factor generated by the finitely many clopen sets
appearing in the given equidecomposition.

\begin{proof}[Proof of Theorem~\ref{thm:main-three-two}]
We first assume that $X$ is metrizable.  The case $a=0$ is
immediate.  If $b=0$, then $3a\leq0$, and conicality of
$\Talpha$ gives $a=0$.  Hence we may assume that
$a,b\neq0$.

Choose $A,B\in\Clop_b(\widetilde X)$ with
$[A]=a$ and $[B]=b$.  Let $B^0,B^1$ be disjoint tagged copies of $B$, and put
\[
   W:=B^0\sqcup B^1.
\]
Fix a finite clopen equidecomposition witnessing $3a\leq2b$.
Its three copies of $A$ are mapped onto pairwise disjoint clopen
sets
$
   F_0,F_1,F_2\subseteq W.
$
After retagging the three source copies, we may regard these as three
finite clopen equidecompositions from the same representative $A$
onto $F_0,F_1,F_2$.

To apply Theorem~\ref{thm:bisection-matching}, we realize these
equidecompositions as bisections of a single amenable transformation
groupoid.  After a finite retagging and refinement according to source
and target sheets, all sets involved lie in
$
   X\times\{0,\ldots,d-1\}
$
for some $d\geq1$.  Identify the sheet set with
\[
   C_d:=\mathbb Z/d\mathbb Z
\]
and let
\[
   K:=\Gamma\times C_d,
   \qquad
   (\gamma,t)(x,j):=(\gamma x,j+t).
\]
If a piece sends $(x,j)$ to $(\gamma x,i)$, assign to it the
label
$
   (\gamma,i-j)\in K.
$
These labels are compatible with composition, since
\[
   (\eta,k-i)(\gamma,i-j)
   =
   (\eta\gamma,k-j).
\]
Thus the three equidecompositions determine compact-open bisections
$E_i$ of $K\ltimes(X\times C_d)$ satisfying
\begin{equation}\label{eq:sourse}
       s(E_i)=A
   \qquad\text{and}\qquad
   r(E_i)=F_i
   \qquad(0\leq i<3).
\end{equation}
The group $K$ is amenable, since it is the product of the amenable
group $\Gamma$ with the finite group $C_d$.

By \eqref{eq:sourse},
the inverse bisection $E_0^{-1}$ has source $F_0$ and range
$A$.  Hence
$
   v_i:=E_iE_0^{-1}
$
is a compact-open bisection with
$
   s(v_i)=F_0$ and $r(v_i)=F_i.
$
Its induced homeomorphism is
\[
   f_i:=\theta_{v_i}
   =\theta_{E_i}\circ\theta_{E_0}^{-1}:F_0\to F_i.
\]
In particular,
\[
   v_0=1_{F_0}
   \qquad\text{and}\qquad
   f_0=\operatorname{id}_{F_0}.
\]
Put
\[
   F:=F_0\sqcup F_1\sqcup F_2
\]
and define
\[
   p
   :=
   1_{W\setminus F}
   \sqcup
   \bigsqcup_{i=0}^2v_{i+1}v_i^{-1},
\]
where the subscripts are read modulo $3$.
The summand $v_{i+1}v_i^{-1}$ maps $F_i$ onto $F_{i+1}$.
On $F_i$, three successive applications give
\[
   (v_iv_{i+2}^{-1})
   (v_{i+2}v_{i+1}^{-1})
   (v_{i+1}v_i^{-1})
   =
   v_iv_i^{-1}
   =
   1_{F_i}.
\]
Hence
\[
   p^3=1_W.
\]
Moreover,
\[
   \theta_p(f_i(z))=f_{i+1}(z)
   \qquad(z\in F_0),
\]
so every $\theta_p$-orbit in $F$ has exactly three points, while
$p$ is the unit bisection on $W\setminus F$.

Let
$
   \tau:B^0\to B^1
$
be the canonical retagging which preserves the $X$-coordinate.
If
$
   (x,j_0)\in B^0
$ and $
   \tau(x,j_0)=(x,j_1)\in B^1,$
then $\tau$ is represented in
$K\ltimes(X\times C_d)$ by the arrow with label
$
   (1_\Gamma,j_1-j_0),
$
while $\tau^{-1}$ has the inverse label
$(1_\Gamma,j_0-j_1)$.

Let $q$ be the compact-open full bisection over
$W=B^0\sqcup B^1$ inducing $\tau$ on $B^0$ and
$\tau^{-1}$ on $B^1$.  Then
\[
   q^2=1_W,
\]
and, since $B^0\cap B^1=\varnothing$, the induced homeomorphism
$\theta_q$ has no fixed point.

We may therefore apply Theorem~\ref{thm:bisection-matching} to the
action of $K$ on $X\times C_d$, with $Y=W$ and the clopen
$\theta_p$-invariant set $F$.  It gives a clopen set
$C\subseteq F$ such that
\begin{equation}\label{eq:three-two-C}
   F
   =
   C\sqcup\theta_p(C)\sqcup\theta_p^2(C),
   \qquad
   C\cap\theta_q(C)=\varnothing.
\end{equation}

We now translate this matching back into the type monoid.  For
$i=0,1,2$, set
\[
   D_i:=f_i^{-1}(C\cap F_i).
\]
For each $z\in F_0$, the three points
$
   f_0(z),$ $f_1(z),$ $f_2(z)
$
form one $\theta_p$-orbit.  By
\eqref{eq:three-two-C}, this orbit meets $C$ exactly once.
Consequently,
\[
   F_0=D_0\sqcup D_1\sqcup D_2.
\]
The restrictions
$
   f_i|_{D_i}:D_i\to C\cap F_i
$
give a finite clopen equidecomposition from $F_0$ onto $C$.
Hence
\[
   [C]=[F_0]=a.
\]

Next set
\[
   D
   :=
   (C\cap B^0)
   \sqcup
   \theta_q(C\cap B^1).
\]
Since $\theta_q$ exchanges $B^0$ and $B^1$,
$
   D\subseteq B^0.
$
The identity on $C\cap B^0$, together with
$\theta_q$ on $C\cap B^1$, therefore gives a finite clopen
equidecomposition from $C$ onto $D$.  Thus
\[
   [D]=[C]=a.
\]
Since $D\subseteq B^0$ and $[B^0]=b$, we obtain
\[
   a\leq b.
\]

The auxiliary factor $C_d$ is used only to record changes of sheet
labels.  Every bisection map constructed above has the form
\[
   (x,j)\mapsto(\gamma x,i)
   \qquad(\gamma\in\Gamma),
\]
and hence is an allowed clopen equidecomposition for the original
$\Gamma$-action.  Therefore the inequality obtained above is indeed
$a\leq b$ in $T(\alpha)$.

We now remove the metrizability assumption.  The representatives $A,B$ and the fixed finite equidecomposition
witnessing $3a\leq2b$ involve only finitely many clopen subsets of
$X$ through their finitely many sheet coordinates.  Let
$\mathscr B\subseteq\Clop(X)$ be the $\Gamma$-invariant Boolean
algebra generated by all these clopen sets.  Since
$\Gamma$ is countable, $\mathscr B$ is countable.

Let $X_0$ be the Stone space of $\mathscr B$.  By Stone duality
(see, for example, \cite[Sections~7 and 8]{Koppelberg}), $X_0$ is
compact, metrizable, and zero-dimensional, and the
$\Gamma$-invariance of $\mathscr B$ induces a continuous
$\Gamma$-action on $X_0$ together with an equivariant factor map
$
   \pi:X\to X_0
$
such that every $U\in\mathscr B$ is the inverse image of a clopen
subset of $X_0$.

Hence the chosen representatives and the finite equidecomposition
descend to $X_0\times\mathbb N$, giving types $a_0,b_0$ with
$
   3a_0\leq2b_0.
$
By the metrizable case, $a_0\leq b_0$.  Pulling back a finite clopen
equidecomposition witnessing this inequality gives $a\leq b$.
\end{proof}

 \subsection{Dynamical comparison without minimality}
\label{subsec:nonminimal-comparison}

We now turn from Theorem \ref{thm:main-three-two} to its comparison consequence
without assuming minimality. Throughout this subsection,
$\alpha:\Gamma\acts X$ is a continuous action of a countably infinite
discrete amenable group on a nonempty compact Hausdorff
zero-dimensional space.

The main difficulty is that, without minimality, a nonzero type need
not be an order unit. We overcome this by combining
Theorem~\ref{thm:main-three-two} with the clopen matching procedure of
Lemma~\ref{lem:shortest-augmentations}. Starting from a strict
inequality under all invariant measures, we first obtain a finite
amplified clopen injection. The finitely many group elements occurring
in this injection are then fixed throughout the construction, which
gives uniform control of the remaining target while the matching is
successively improved. This allows the three-to-two theorem to absorb
the unmatched remainder and yields dynamical comparison without
simplicity or cancellation.

\begin{lemma}\label{lem:nm-uniform-averages}
Let $f\in C(X,\mathbb R)$ and suppose that
\[
 \delta:=\min_{\mu\in\cM_\Gamma(X)}\int_X f\,d\mu>0.
\]
Then, for every F\o lner sequence $(F_j)$,
\[
 \frac{1}{|F_j|}\sum_{\gamma\in F_j}f(\gamma x)>\delta/2
 \qquad(x\in X)
\]
for all sufficiently large $j$.

Consequently, if $A,B\in\Clop_b(\widetilde X)$ satisfy
$
 \mu([A])<\mu([B])
$ for all $\mu\in\cM_\Gamma(X)$
then, for some $n\geq1$,
\begin{equation}\label{eq:nm-amplified-comparison}
 n[A]+u\leq n[B].
\end{equation}
\end{lemma}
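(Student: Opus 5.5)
The uniform ergodic estimate will come from a compactness argument by contradiction, and the amplified inequality will then follow by a counting argument along a single F\o lner set.

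\emph{First part.} Suppose the claim fails. Then there are $j_k\to\infty$ and points $x_k\in X$ with $\frac{1}{|F_{j_k}|}\sum_{\gamma\in F_{j_k}}f(\gamma x_k)\le\delta/2$. Form the probability measures $\nu_k:=\frac{1}{|F_{j_k}|}\sum_{\gamma\in F_{j_k}}\delta_{\gamma x_k}$ and take a weak$^*$ cluster point $\nu$; this is a subnet argument, since $X$ need not be metrizable. The F\o lner property gives $\|\eta\nu_k-\nu_k\|\le|\eta F_{j_k}\triangle F_{j_k}|/|F_{j_k}|\to0$ for each $\eta\in\Gamma$, so $\nu$ is invariant and regular, hence $\nu\in\cM_\Gamma(X)$. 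But $\int f\,d\nu\le\delta/2<\delta$, which contradicts the definition of $\delta$. The minimum defining $\delta$ is attained because $\cM_\Gamma(X)$ is weak$^*$ compact.

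\emph{Second part.} Set $g:=m_B-m_A\in C(X,\mathbb Z)$. By \eqref{eq:formu}, $\int g\,d\mu=\mu([B])-\mu([A])>0$ for every $\mu$, so compactness gives $\delta>0$. Fix a large F\o lner set $F=F_j$ with $\sum_{\gamma\in F}g(\gamma x)>|F|\delta/2$ for all $x$, and choose $|F|$ so large that $|F|\delta/2\ge1$. Since $g$ is integer-valued, the integer-valued continuous functions $\Phi_h(x):=\sum_{\gamma\in F}h(\gamma x)$ then satisfy $\Phi_{m_B}\ge\Phi_{m_A}+1$ pointwise. I will take $n:=|F|$. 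The key observation is that $[h\circ\gamma]_T=[h]_T$ for each $\gamma\in\Gamma$, because $\Sigma(h\circ\gamma)$ is the translate $\gamma^{-1}\Sigma(h)$ sheetwise. Hence $[\Phi_h]_T=\sum_{\gamma\in F}[h\circ\gamma]_T=n[h]_T$. Pointwise monotonicity of $f\mapsto[f]_T$ then yields
\[
 n[A]+u=[\Phi_{m_A}+1]_T\le[\Phi_{m_B}]_T=n[B].
\]
This uses additivity of $[\cdot]_T$ on nonnegative functions, as recorded in the preliminaries.

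The only delicate point is the first part in the nonmetrizable setting: sequences must be replaced by nets, and the cluster point must be checked to be a regular invariant measure. This is standard via the Riesz representation on $C(X)^*$. Everything else is bookkeeping.
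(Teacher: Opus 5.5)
Your proposal is correct and follows the paper's proof essentially step for step. The first part is the same contradiction argument: empirical measures $\nu_k$, a weak$^*$ cluster point that is invariant by the F\o lner property, contradicting the definition of $\delta$. The second part applies this to $m_B-m_A$ over a single F\o lner set $F$, with $n=|F|$ and integrality supplying the $+1$.

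Your explicit check that $[h\circ\gamma]_T=[h]_T$, via $\Sigma(h\circ\gamma)=\gamma^{-1}\Sigma(h)$ sheetwise, just spells out the paper's ``passing to types'' step. The extra requirement $|F|\delta/2\ge1$ is harmless but unnecessary, since a strictly positive integer-valued sum is already at least $1$.
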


\begin{proof}
The first assertion is the standard compactness argument for F\o lner
averages. Otherwise, for some $j_k\to\infty$ and $x_k\in X$, the
empirical measures
\[
 \nu_k:=\frac{1}{|F_{j_k}|}
        \sum_{\gamma\in F_{j_k}}\delta_{\gamma x_k}
\]
satisfy $\int f\,d\nu_k\leq\delta/2$. Any weak* convergent subnet has
an invariant limit, by the F\o lner property, contradicting the
definition of $\delta$.

Apply this to $f=m_B-m_A$. For some finite F\o lner set $F$,
\[
 \sum_{\gamma\in F}(m_B-m_A)(\gamma x)\geq1
 \qquad(x\in X),
\]
where we use that the sum is integer-valued. Passing to types and
putting $n=|F|$ gives
\[
 u+n[A]\leq n[B].\qedhere
\]
\end{proof}

\begin{lemma}\label{lem:nm-uniform-remainder}
Let $A,B\in\Clop_b(\widetilde X)$ satisfy
$
 \mu([A])<\mu([B])$ for all $\mu\in\cM_\Gamma(X),$
and fix a finite set $S\subseteq\Gamma$. There is an integer
$L\geq1$ with the following property. Whenever
$M:D\to C$ is a finite clopen equidecomposition between
$D\subseteq A$ and $C\subseteq B$, all of whose group labels
belong to $S$, then
\begin{equation}\label{eq:nm-uniform-target}
 u\leq L[B\setminus C].
\end{equation}
The integer $L$ is independent of $M$ and of its clopen pieces.
\end{lemma}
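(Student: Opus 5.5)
The plan is to obtain a pointwise lower bound for $m_{B\setminus C}$ along orbits, and then convert it into a type inequality exactly as in Lemma~\ref{lem:nm-uniform-averages}. Since $M$ is an equidecomposition with labels in $S$, the clopen set $C$ is the image of $D\subseteq A$ under pieces $(x,i)\mapsto(\gamma x,j)$ with $\gamma\in S$.

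The first step is to record what this implies for multiplicity functions. For every $x\in X$ we have
\[
 m_C(x)\leq\sum_{\gamma\in S} m_D(\gamma^{-1}x)\leq\sum_{\gamma\in S}m_A(\gamma^{-1}x),
\]
since each point of $C$ over $x$ is the image of a distinct point of $D$ over some $\gamma^{-1}x$. This estimate is too crude on its own, because it loses a factor $|S|$. The main obstacle is that we need a bound uniform in $M$, while $m_C$ may be concentrated locally. So we should average over a F\o lner set rather than argue pointwise.

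The key step takes the Følner approach. By Lemma~\ref{lem:nm-uniform-averages} applied to $f=m_B-m_A$, with $\delta>0$ the minimum of its integrals, choose a F\o lner set $F$ that is $(S,\varepsilon)$-invariant, with $\varepsilon$ small relative to $\delta$ and to $N:=\max m_A$. Choose $F$ also so that $\sum_{g\in F}(m_B-m_A)(gx)\geq \delta|F|/2$ for all $x$. Then, for each $x$,
\[
 \sum_{g\in F}m_C(gx)\leq\sum_{g\in F}\sum_{\gamma\in S}m_D(\gamma^{-1}gx),
\]
but injectivity of $M$ gives something better: the points of $D$ lying over $\gamma^{-1}gx$ with $g\in F$ are all distinct, and except for boundary terms they lie over the set $\{hx : h\in F\}$. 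More precisely, each point of $C$ over $Fx$ comes from a point of $D$ over $S^{-1}Fx$. Hence
\[
\sum_{g\in F}m_C(gx)\leq\sum_{h\in S^{-1}F}m_D(hx)\leq\sum_{h\in F}m_A(hx)+N|S^{-1}F\setminus F|,
\]
where the orbit map $h\mapsto hx$ is counted with multiplicity. Counting with multiplicity is harmless because the injective pieces are indexed by group elements and sheets. Since $|S^{-1}F\setminus F|<\varepsilon|F|$, taking $\varepsilon N<\delta/4$ yields
\[
\sum_{g\in F}m_{B\setminus C}(gx)\geq\tfrac{\delta}{4}|F|>0,
\]
so this sum is at least $1$ for every $x$.

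The final step converts this into types. The function $x\mapsto\sum_{g\in F}m_{B\setminus C}(gx)$ is at least $\one_X$. Since $[m\circ g]_T=[m]_T$, this gives $u\leq|F|[B\setminus C]$, so $L:=|F|$ works. By construction $F$ depends only on $A$, $B$ and $S$, so $L$ is independent of $M$. The delicate point to check carefully is the injectivity count in the middle display, namely that distinct points of $C$ over $Fx$ have distinct preimages in $D$ over $S^{-1}Fx$ when those are viewed as pairs (group element, sheet). This holds because $M$ is a bijection $D\to C$. One should do the counting at the level of the orbit with multiplicity, or assume $F$ is large, in order to handle nonfree points.
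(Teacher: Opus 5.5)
Your proof is correct and follows essentially the same route as the paper. Both arguments choose a F\o lner set $F$ satisfying $\sum_{g\in F}(m_B-m_A)(gx)>\frac{\delta}{2}|F|$ for all $x$ and almost invariant under $S$, bound $\sum_{g\in F}m_C(gx)$ by the corresponding $D$-sum plus a boundary term, and then use integrality to get $u\leq|F|[B\setminus C]$ with $L=|F|$. The only difference is how the multiplicity issue is handled: the paper splits $D=\bigsqcup_{\gamma\in S}D_\gamma$ by label, so the pointwise identity $m_C=\sum_{\gamma}m_{D_\gamma}\circ\gamma^{-1}$ replaces your injectivity count and needs no freeness, and your suggestion to take $F$ large is unnecessary.
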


\begin{proof}
Choose $d\geq1$ such that
$A,B\subseteq X\times[d]$, and put
\[
 f=m_B-m_A,
 \qquad
 \delta:=\min_{\mu\in\cM_\Gamma(X)}
              \int_X f\,d\mu>0.
\]
By Lemma~\ref{lem:nm-uniform-averages},
choose a nonempty finite set $F\subseteq\Gamma$ such that
\begin{equation}\label{eq:nm-fixed-window}
 \sum_{g\in F}f(gx)>\frac{\delta}{2}|F|
 \quad(x\in X),
\qquad\text{and}\qquad
 d\sum_{\gamma\in S}
   |\gamma^{-1}F\mathbin\triangle F|
 <\frac{\delta}{4}|F|.
\end{equation}

Fix such an equidecomposition $M:D\to C$, and write
\[
 D=\bigsqcup_{r=1}^N D_r,
 \qquad
 M(x,i_r)=(\gamma_r x,j_r)
 \quad\text{on }D_r,
\]
where $\gamma_r\in S$. For $\gamma\in S$, set
\[
 D_\gamma:=\bigsqcup_{\{r:\gamma_r=\gamma\}}D_r,
 \qquad
 p_\gamma:=m_{D_\gamma}.
\]
Then
\[
 D=\bigsqcup_{\gamma\in S}D_\gamma,
 \quad
 C=\bigsqcup_{\gamma\in S}M(D_\gamma),
\quad\text{and}\quad
 m_{M(D_\gamma)}(x)
 =
 p_\gamma(\gamma^{-1}x).
\]
Consequently,
\[
 m_D
 =
 \sum_{\gamma\in S}p_\gamma,
 \qquad
 m_C
 =
 \sum_{\gamma\in S}p_\gamma\circ\gamma^{-1},
 \qquad
 0\leq p_\gamma\leq d.
\]
Hence, by \eqref{eq:nm-fixed-window} for every $x\in X$,
\begin{align*}
\left|\sum_{g\in F}(m_C-m_D)(gx)\right|
&\leq
\sum_{\gamma\in S}
\left|
\sum_{h\in\gamma^{-1}F}p_\gamma(hx)
-
\sum_{h\in F}p_\gamma(hx)
\right|\\
&\leq
d\sum_{\gamma\in S}
|\gamma^{-1}F\mathbin\triangle F|
<
\frac{\delta}{4}|F|.
\end{align*}

Put
$
 U=A\setminus D,
$ $
 V=B\setminus C.
$
Since
$
 m_V=f+m_U-(m_C-m_D),
$
we obtain from \eqref{eq:nm-fixed-window} that
\[
 \sum_{g\in F}m_V(gx)
 >
 \frac{\delta}{4}|F|
 >0
 \qquad(x\in X).
\]
The left-hand side is integer-valued, and therefore it is at least
$1$ everywhere. Passing to types gives
\[
 u
 \leq
 \left[\sum_{g\in F}m_V\circ g\right]_T
 =
 |F|[V].
\]
Thus \eqref{eq:nm-uniform-target} holds with $L=|F|$.
\end{proof}

For $n\in\mathbb N$, we regard $A\times[n]$ as $n$ disjoint tagged
copies of $A$. After retagging the sheet coordinates, this is again
viewed as a bounded clopen subset of $\widetilde X$, and hence
\[
   [A\times[n]]=n[A].
\]

Fix $A,B\in\Clop_b(\widetilde X)$ and suppose that
$
   n[A]\leq n[B].
$
Choose a finite clopen equidecomposition from $A\times[n]$ onto a
clopen subset of $B\times[n]$, and denote the resulting injection by
\[
   J:A\times[n]\longrightarrow B\times[n].
\] 
Refining its pieces by the two $[n]$-coordinates, we may
write them in the form
\[
   J(z,k_i)=\bigl(\phi_i(z),\ell_i\bigr),
   \qquad z\in C_i,
\]
where $C_i\subseteq A$ is clopen,
$k_i,\ell_i\in[n]$, and
$
   \phi_i:C_i\to \phi_i(C_i)\subseteq B
$
is a clopen partial homeomorphism given by a group element together
with a change of the original sheet coordinate.

Associated with $J$ is the bipartite multigraph $\mathcal G_J$ with
left and right vertex spaces given by disjoint copies of $A$ and $B$,
and edge space
\[
   E_J:=\bigsqcup_i\bigl(\{i\}\times C_i\bigr),
\]
with endpoint maps
\[
   s(i,z)=z,
   \qquad
   r(i,z)=\phi_i(z).
\]
Thus $\mathcal G_J$ has a finite clopen presentation. The index $i$
is retained, so parallel edges are allowed.

\begin{lemma}\label{lem:nm-packing-unmatched}
Let $M$ be a clopen matching in $\mathcal G_J$, and suppose that
$M$ has no augmenting path of length at most $2q-1$, where $q\geq1$.
If
$
   U:=A\setminus\dom M,
$
then
\begin{equation}\label{eq:nm-packing}
   (q+1)n[U]\leq n[A].
\end{equation}
\end{lemma}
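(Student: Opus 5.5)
The plan is a Hall-type expansion argument along alternating layers. We exploit the fact that $J$ is defined on all of $A\times[n]$, so it yields an $n$-fold amplified injection from any left vertex set into its neighbourhood.

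First we set up the layers. For $0\leq j\leq q$, let $L_j\subseteq A$ be the set of left vertices reachable from $U$ by an alternating walk of length at most $2j$. Such a walk uses edges of $\mathcal G_J$ from $L$ to $R$, followed by backward matching edges $M^{-1}$. Thus $L_0=U$ and $L_j\subseteq L_{j+1}$. For a clopen $S\subseteq A$, let
\[
N(S):=\bigcup_i\phi_i(S\cap C_i)\subseteq B
\]
be its right neighbourhood. As in the proof of Lemma~\ref{lem:shortest-augmentations}, each $L_j$ and each $N(L_j)$ is a finite union of images of clopen sets under finitely many compositions of the clopen partial homeomorphisms $\phi_i$ and $M^{-1}$. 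Hence all these sets are clopen. We also have the recursion
\[
L_{j+1}=L_j\cup M^{-1}\bigl(N(L_j)\cap\ran M\bigr).
\]

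The key step shows that, for $j\leq q-1$, every vertex of $N(L_j)$ is matched. Suppose some $y\in N(L_j)\setminus\ran M$. Then there is a directed walk from an unmatched left vertex to $y$ of length at most $2j+1\leq 2q-1$. Shortening it as in Lemma~\ref{lem:shortest-augmentations} gives an augmenting path of length at most $2q-1$, contrary to the hypothesis. Consequently $M^{-1}(N(L_j))$ is a clopen subset of $L_{j+1}\cap\dom M$, and so it is disjoint from $U$. Since $M$ is a clopen equidecomposition,
\[
[U]+[N(L_j)]=[U]+[M^{-1}N(L_j)]=[U\sqcup M^{-1}N(L_j)]\leq[L_{j+1}].
\]

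Next we use $J$. Every point $(z,k)$ with $z\in L_j$ lies in exactly one piece $C_i\times\{k_i\}$, and its image $(\phi_i(z),\ell_i)$ lies in $N(L_j)\times[n]$. Restricting $J$ to $L_j\times[n]$ therefore gives a clopen equidecomposition of $L_j\times[n]$ onto a clopen subset of $N(L_j)\times[n]$, so
\[
n[L_j]\leq n[N(L_j)].
\]
Multiplying the previous display by $n$ and combining yields
\[
n[U]+n[L_j]\leq n[U]+n[N(L_j)]\leq n[L_{j+1}]\qquad(0\leq j\leq q-1).
\]

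Finally we induct from $n[L_0]=n[U]$. We have $n[L_0]=n[U]$, and each step adds $n[U]$ below, so $(j+1)n[U]\leq n[L_j]$ for all $j\leq q$. Taking $j=q$ and using $L_q\subseteq A$ gives $(q+1)n[U]\leq n[L_q]\leq n[A]$, which is \eqref{eq:nm-packing}. Transitivity and additivity of $\leq$ are the only algebraic facts needed, and they hold for the algebraic preorder without cancellation.

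The main obstacle I expect is bookkeeping rather than conceptual. It consists of verifying that the $L_j$ are genuinely clopen, and making sure that the shortening of walks to paths, with endpoints kept unmatched, is justified exactly as in Lemma~\ref{lem:shortest-augmentations} in the presence of parallel edges.
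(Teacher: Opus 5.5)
Your argument is correct, but it is organized differently from the paper's.

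The paper works with the single injective clopen partial map $T=(M^{-1}\times\id_{[n]})\circ J$ on $A\times[n]$, whose range lies in $(\dom M)\times[n]$. The absence of augmenting paths of length at most $2q-1$ shows that $T^j$ is defined on all of $U\times[n]$ for $0\le j\le q$. The sets $T^j(U\times[n])$, $0\le j\le q$, are then pairwise disjoint, because $T$ is injective and $U\times[n]$ misses $\ran T$. This gives an explicit packing of $q+1$ copies of $U\times[n]$ inside $A\times[n]$, without introducing reachable layers or neighbourhoods.

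You instead run a Hall-type expansion entirely inside the preorder of $T(\alpha)$:
\begin{itemize}
\item the reachable layers $L_j$ are clopen by the recursion;
\item excluding short augmenting paths gives $N(L_j)\subseteq\ran M$ for $j\le q-1$;
\item the inequalities $n[L_j]\le n[N(L_j)]$ and $[U]+[N(L_j)]\le[L_{j+1}]$ are chained by induction, using only additivity and transitivity of the algebraic preorder.
\end{itemize}
The two routes are closely related: $T(L_j\times[n])\subseteq L_{j+1}\times[n]$, so $T^j(U\times[n])\subseteq L_j\times[n]$. Your layers are exactly where the paper's iterates live, and your inequalities are a type-level coarsening of the paper's disjointness.

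The paper's route is shorter and yields a concrete equidecomposition witness. Yours uses $J$ only through the amplified Hall inequality $n[S]\le n[N(S)]$ for clopen $S\subseteq A$. It would therefore apply to any finitely presented clopen graph satisfying that inequality. The cost is the (easy) clopenness and walk-shortening bookkeeping you already flagged, and both points check out as you describe.
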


\begin{proof}
The matching $M$ induces a clopen partial homeomorphism
\[
   M:\dom M\to\ran M
\]
from a clopen subset of $A$ onto a clopen subset of $B$.
On its natural domain in $A\times[n]$, define
\[
   T:=(M^{-1}\times\id_{[n]})\circ J.
\]
Then $T$ is an injective clopen partial homeomorphism, piecewise given
by group elements and sheet changes, and
\begin{equation}\label{eq:nm-packing-range}
   \ran T\subseteq(\dom M)\times[n].
\end{equation}

We claim that $T^j$ is defined on all of $U\times[n]$ for
$0\leq j\leq q$. Suppose otherwise, and let
$1\leq r\leq q$ be the first stage at which $T^r$ fails to be
defined at some $z\in U\times[n]$. Then $T^{r-1}(z)$ is defined, but
\[
   J\bigl(T^{r-1}(z)\bigr)
   \notin(\ran M)\times[n].
\]
Write
\[
   z_j:=T^j(z)\qquad(0\leq j\leq r-1),
\]
and let $x_j\in A$ and $y_j\in B$ be the vertices obtained from
$z_j$ and $J(z_j)$, respectively, by discarding the extra
$[n]$-labels. Since $T^{j+1}(z)$ is defined for $j<r-1$,
\[
   y_j\in\ran M,
   \qquad
   x_{j+1}=M^{-1}(y_j).
\]
On the other hand,
\[
   x_0\notin\dom M,
   \qquad
   y_{r-1}\notin\ran M.
\]
Thus
\[
   x_0,y_0,x_1,y_1,\ldots,x_{r-1},y_{r-1}
\]
is a directed walk from an unmatched left vertex to an unmatched
right vertex, consisting alternately of an edge coming from $J$
and a matching edge traversed backwards. Its length is
\[
   2r-1\leq2q-1.
\]
Deleting subwalks between repeated vertices produces an augmenting
path of no greater length, contradicting the hypothesis.

It remains to observe that
$
   T^j(U\times[n]),
$ $0\leq j\leq q,
$
are pairwise disjoint. Indeed, suppose
\[
   T^i(z)=T^j(z')
   \qquad
   (0\leq i<j\leq q),
\]
with $z,z'\in U\times[n]$. Since $T^i$ is injective,
\[
   z=T^{j-i}(z')\in\ran T,
\]
which contradicts \eqref{eq:nm-packing-range}, since
$z\in U\times[n]$ and $U\cap\dom M=\varnothing$.

Each set $T^j(U\times[n])$ has type $n[U]$, and their disjoint union
is contained in $A\times[n]$. Hence
\[
   (q+1)n[U]
   =
   \sum_{j=0}^q [T^j(U\times[n])]
   \leq [A\times[n]]
   =
   n[A],
\]
which proves \eqref{eq:nm-packing}.
\end{proof}
The equivalence of \textup{(i)} and \textup{(ii)} in the following
lemma is \cite[Lemma~2.7]{Melleray}. The orbit characterization
in \textup{(iii)} is implicit in its proof. We include the argument
for completeness.
\begin{lemma}\label{lem:nm-order-unit-test}
For $c\in T(\alpha)$, the following are equivalent:
\begin{enumerate}[label=\textup{(\roman*)}]
\item $c$ is an order unit;
\item $\mu(c)>0$ for every $\mu\in\cM_\Gamma(X)$;
\item for a representative $C\in\Clop_b(\widetilde X)$ of $c$,
      the clopen set $\{x:m_C(x)>0\}$ meets every $\Gamma$-orbit. For a clopen subset of $X$, this is equivalent to the condition $\Gamma C=X$.
\end{enumerate}
\end{lemma}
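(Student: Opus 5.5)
I will prove the cycle (i)$\Rightarrow$(ii)$\Rightarrow$(iii)$\Rightarrow$(i), using as the main tool the identity $\mu([f]_T)=\int f\,d\mu$ from \eqref{eq:formu} together with the first assertion of Lemma~\ref{lem:nm-uniform-averages}. Throughout, fix a representative $C\in\Clop_b(\widetilde X)$ of $c$ and put $C_+:=\{x:m_C(x)>0\}$, which is clopen because $m_C$ is locally constant.

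For (i)$\Rightarrow$(ii): if $c$ is an order unit, then $u\leq mc$ for some $m\ge1$. Every $\mu\in\cM_\Gamma(X)$ is a state and is monotone for the algebraic preorder, so $1=\mu(u)\leq m\mu(c)$, and therefore $\mu(c)>0$.

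For (ii)$\Rightarrow$(iii): I argue by contraposition. Suppose some orbit $\Gamma x$ misses $C_+$. Then the orbit closure $Y:=\overline{\Gamma x}$ is a nonempty closed invariant set disjoint from the clopen set $C_+$. Since $\Gamma$ is amenable, $Y$ carries an invariant regular probability measure, which extends by zero to some $\mu\in\cM_\Gamma(X)$. This gives $\mu(c)=\int m_C\,d\mu=0$, contradicting (ii).

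For (iii)$\Rightarrow$(i): I plan to apply Lemma~\ref{lem:nm-uniform-averages} to $f:=\one_{C_+}$. Condition (iii) should give $\mu(C_+)>0$ for every invariant $\mu$, so the hypothesis $\delta>0$ of that lemma holds. The justification is a compactness step. Since $\Gamma C_+$ is open, invariant and meets every orbit, it equals $X$. By compactness, $X=\bigcup_{\gamma\in S}\gamma C_+$ for some finite $S$, and invariance then gives $1\le|S|\mu(C_+)$. Alternatively, one can use this finite cover directly, and I would take that route. The cover yields $\sum_{\gamma\in S}\one_{C_+}\circ\gamma^{-1}\ge1$ pointwise. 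Passing to types gives $u\leq|S|[\one_{C_+}]_T\leq|S|c$, because $\one_{C_+}\le m_C$ and each translate $\one_{C_+}\circ\gamma^{-1}=\one_{\gamma C_+}$ has type $[C_+]$. Hence $u\le|S|c$, and $c$ is an order unit, since every type is bounded by a multiple of $u$ (bounded representatives lie in $X\times[d]$).

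For the final clause of (iii): a clopen $C\subseteq X$ satisfies $m_C=\one_C$, and $C$ meets every orbit if and only if $\Gamma C=X$. The only mildly delicate step is (ii)$\Rightarrow$(iii), which needs the existence of an invariant measure supported on an orbit closure. This is a standard consequence of amenability, applied to the compact invariant subsystem $Y$; no metrizability is needed, since Markov--Kakutani/F{\o}lner averaging works on any compact space.
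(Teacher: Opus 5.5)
Your proposal is correct and follows the paper's proof: the same cycle (i)$\Rightarrow$(ii)$\Rightarrow$(iii)$\Rightarrow$(i), with (ii)$\Rightarrow$(iii) from an invariant measure on a nonempty closed invariant set missing $W=\{x:m_C(x)>0\}$ (you take an orbit closure, the paper takes $X\setminus\Gamma W$), and (iii)$\Rightarrow$(i) from a finite cover of $X$ by translates of $W$, giving $u\le k[W]\le kc$. As you concluded yourself, the detour through Lemma~\ref{lem:nm-uniform-averages} is unnecessary; also, for a general amenable group the invariant measure comes from F{\o}lner averaging (or Day's fixed-point theorem), not from Markov--Kakutani, which only covers commuting families.
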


\begin{proof}
If $c$ is an order unit, then $u\leq kc$ for some $k\geq1$.
Hence, for every invariant probability measure $\mu$,
$
   1=\mu(u)\leq k\mu(c),
$
so $\mu(c)>0$. Thus \textup{(i)} implies \textup{(ii)}.

To prove that \textup{(ii)} implies \textup{(iii)}, put
\[
   W:=\{x\in X:m_C(x)>0\}.
\]
If $\Gamma W\neq X$, then $X\setminus\Gamma W$ is a nonempty closed
invariant subspace. By amenability, it supports an invariant regular
Borel probability measure $\mu$. Viewed as a measure on $X$, this
measure satisfies $\mu(c)=0$, contradicting \textup{(ii)}.

Finally, suppose that $\Gamma W=X$. By compactness, finitely many
translates of $W$ cover $X$, and therefore
$
   u\leq k[W]\leq kc
$
for some $k\geq1$. Since $u$ is an order unit, every
$a\in T(\alpha)$ satisfies $a\leq mu$ for some $m\geq1$, and hence
$
   a\leq mu\leq mkc.
$
Thus $c$ is an order unit, proving \textup{(iii)} implies
\textup{(i)}.
\end{proof}

\begin{proof}[Proof of Theorem~\ref{thm:main-comparison} \textup{(i)}]
Choose representatives $A,B\in\Clop_b(\widetilde X)$ of $a,b$
and $d\geq1$ such that
\[
   A,B\subseteq X\times[d].
\]
If $a=0$, then $\mu(b)>0$ for every
$\mu\in\cM_\Gamma(X)$, so Lemma~\ref{lem:nm-order-unit-test}
shows that $b$ is an order unit. We may therefore assume $a\neq0$.

By Lemma~\ref{lem:nm-uniform-averages}, there are $n\geq1$ and a
finite clopen equidecomposition giving an injection
\[
   J:A\times[n]\to B\times[n].
\]
Let $\mathcal G_J$ be the associated bipartite multigraph, and let
$S\subseteq\Gamma$ be the finite set of group labels occurring in
its edge charts. Lemma~\ref{lem:nm-uniform-remainder} gives
$L\geq1$ such that, for every clopen matching $M$ in $\mathcal G_J$,
\[
   u\leq L[B\setminus\ran M].
\]

Choose $h\geq1$ such that
$
   2^h\geq ndL,
$
and then choose $q\geq1$ such that
$
   (q+1)n\geq3^h.
$
Apply Lemma~\ref{lem:shortest-augmentations} to the empty matching
and let $M$ be the matching obtained after $q$ stages. Then $M$ has
no augmenting path of length at most $2q-1$.

Put
\[
   U:=A\setminus\dom M,
   \qquad
   V:=B\setminus\ran M,
   \qquad
   r:=[U],\quad t:=[V].
\]
By Lemma~\ref{lem:nm-packing-unmatched},
$
   (q+1)nr\leq na,
$
while Lemma~\ref{lem:nm-uniform-remainder} gives
$
   u\leq Lt.
$
Since $A\subseteq X\times[d]$, we also have $a\leq du$. Hence
\begin{equation*}
   3^hr
   \leq(q+1)nr
   \leq na
   \leq nd u
   \leq ndL t
   \leq2^ht.
\end{equation*}
Repeated application of Theorem~\ref{thm:main-three-two} yields
\[
   r\leq t.
\]
Thus $U$ is equidecomposable with a clopen subset  $W\subset V$.
Combining this equidecomposition with the matching $M$ gives an
equidecomposition of
$
   A=\dom M\sqcup U
$
with the clopen subset
$
   \ran M\sqcup W\subseteq B.
$
Hence $a\leq b$. Choose $c\in T(\alpha)$ such that $b=a+c$.
Then
\[
   \mu(c)=\mu(b)-\mu(a)>0
   \qquad(\mu\in\cM_\Gamma(X)),
\]
so Lemma~\ref{lem:nm-order-unit-test} shows that $c$ is an order unit.

It remains to prove the full-group assertion. Let $A,B\subseteq X$
be clopen and satisfy
$
   \mu(A)<\mu(B)
$ for all $\mu\in\cM_\Gamma(X).$
Set
\[
   A_0:=A\setminus B
   \qquad\text{and}\qquad
   B_0:=B\setminus A.
\]
Then
\[
   \mu(A_0)<\mu(B_0)
   \qquad(\mu\in\cM_\Gamma(X)).
\]
By the comparison just proved, there are a clopen set
$C\subseteq B_0$ and a finite clopen equidecomposition
$
   \varphi:A_0\to C.
$
Define
\[
   g(x):=
   \begin{cases}
      \varphi(x),&x\in A_0,\\
      \varphi^{-1}(x),&x\in C,\\
      x,&x\in X\setminus(A_0\cup C).
   \end{cases}
\]
Then $g\in[[\alpha]]$, $g^2=\id_X$, and
\[
   g(A)=(A\cap B)\sqcup C\subseteq B.
\]
Moreover, for every $\mu\in\cM_\Gamma(X)$,
\begin{align*}
   \mu\bigl(B\setminus g(A)\bigr)
   &=\mu(B_0)-\mu(C)\\
   &=\mu(B\setminus A)-\mu(A\setminus B)\\
   &=\mu(B)-\mu(A)>0.
\end{align*}
Hence by Lemma~\ref{lem:nm-order-unit-test},
\[
   \Gamma\bigl(B\setminus g(A)\bigr)=X.\qedhere
\]
\end{proof}

\begin{corollary}\label{cor:nm-direct-consequences}
Let $\alpha:\Gamma\acts X$ be a continuous action of a countably
infinite discrete amenable group on a nonempty compact Hausdorff
zero-dimensional space. Then the following hold.
\begin{enumerate}[label=\textup{(\roman*)}]
\item If $b\in T(\alpha)$ is an order unit, then
      \[
        (n+1)a\leq nb\quad\Longrightarrow\quad a\leq b
        \qquad(a\in T(\alpha),\ n\geq1).
      \]
\item Writing $T(\alpha)^{\mathrm{ou}}$ for the order units,
      \[
       \pi\bigl(T(\alpha)^{\mathrm{ou}}\bigr)
       =\{h\in H(\alpha):
              \widehat h(\mu)>0\text{ for every }\mu\in\cM_\Gamma(X)\}.
      \]
\item The affine evaluation map associated with
      $Q=\cM_\Gamma(X)$ is good.
\end{enumerate}
\end{corollary}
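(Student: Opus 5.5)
All three items should follow quickly from Theorem~\ref{thm:main-comparison}\,(i), combined with Lemma~\ref{lem:nm-order-unit-test} and the evaluation formula \eqref{eq:formu}.

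For \textup{(i)}, let $b$ be an order unit and suppose $(n+1)a\le nb$. Apply each $\mu\in\cM_\Gamma(X)$ to get $(n+1)\mu(a)\le n\mu(b)$. By Lemma~\ref{lem:nm-order-unit-test}, $\mu(b)>0$. Hence
\[
   \mu(a)\le\tfrac{n}{n+1}\mu(b)<\mu(b)\qquad(\mu\in\cM_\Gamma(X)).
\]
Theorem~\ref{thm:main-comparison}\,(i) then gives $b=a+c$, so $a\le b$. Note that $\cM_\Gamma(X)\ne\varnothing$ by amenability; this is needed for the strict inequality to carry information, and here it is automatic.

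For \textup{(ii)}, first take the inclusion $\subseteq$. If $c=[f]_T$ is an order unit, then $\widehat{\pi(c)}(\mu)=\int f\,d\mu=\mu(c)>0$ by \eqref{eq:formu} and Lemma~\ref{lem:nm-order-unit-test}. For $\supseteq$, the only genuine issue is that $h$ is a priori a class $[g]_H$ with $g\in C(X,\mathbb Z)$ possibly taking negative values. I would write $g=g^+-g^-$ with $g^\pm\ge0$ continuous and integer-valued. Then
\[
   \mu([g^+]_T)=\mu([g^-]_T)+\widehat h(\mu)>\mu([g^-]_T)\qquad\text{for all }\mu.
\]
Theorem~\ref{thm:main-comparison}\,(i) gives $[g^+]_T=[g^-]_T+c$ with $c$ an order unit. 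Applying the additive map $\pi$ yields $[g^+]_H=[g^-]_H+\pi(c)$, so $\pi(c)=[g]_H=h$. This step is the main point of the proof, and it works precisely because part (i) produces an additive decomposition rather than a mere inequality.

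For \textup{(iii)}, suppose $\widehat A<\widehat B$ on $Q$. The full-group statement of Theorem~\ref{thm:main-comparison}\,(i) provides $g\in[[\alpha]]$ with $g(A)\subseteq B$. Put $D=g(A)$. Since $g$ is piecewise given by group elements on a clopen partition, it preserves every invariant measure, so $\widehat D=\widehat A$. If $A=\varnothing$, take $D=\varnothing$. No step here looks like a serious obstacle.
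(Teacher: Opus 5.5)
Your proposal is correct and matches the paper's proof step for step. For (i) you use $\mu(b)>0$ plus strict inequality, and for (ii) you split $f=f^+-f^-$ and apply $\pi$ to the order-unit decomposition $b=a+c$ from Theorem~\ref{thm:main-comparison}\,(i). The only cosmetic difference is in (iii): you take $D=g(A)$ from the full-group conclusion, whereas the paper reads off a clopen $D\subseteq B$ with $[D]=[A]$ directly from $[A]\leq[B]$; both are immediate from Theorem~\ref{thm:main-comparison}\,(i).
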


\begin{proof}
For \textup{(i)}, since $b$ is an order unit,
 Lemma~\ref{lem:nm-order-unit-test} gives
$
   \mu(b)>0
$ for all $\mu\in\cM_\Gamma(X).$
If
$
   (n+1)a\leq nb,
$
then, for every $\mu\in\cM_\Gamma(X)$,
\[
   \mu(a)
   \leq
   \frac{n}{n+1}\mu(b)
   <
   \mu(b).
\]
Theorem~\ref{thm:main-comparison}\,\textup{(i)} therefore gives
$a\leq b$.

For \textup{(ii)}, let $c\in T(\alpha)^{\mathrm{ou}}$. By
Lemma~\ref{lem:nm-order-unit-test},
\[
   \widehat{\pi(c)}(\mu)=\mu(c)>0
   \qquad(\mu\in\cM_\Gamma(X)),
\]
which proves one inclusion.
Conversely, let $h=[f]_H\in H(\alpha)$ satisfy
\[
   \widehat h(\mu)>0
   \qquad(\mu\in\cM_\Gamma(X)).
\]
Write
\[
   f=f^+-f^-,
   \qquad
   a:=[f^-]_T,
   \qquad
   b:=[f^+]_T.
\]
Then
\[
   \mu(b)-\mu(a)
   =
   \int_X f\,d\mu
   =
   \widehat h(\mu)
   >0
\]
for every $\mu\in\cM_\Gamma(X)$.
By Theorem~\ref{thm:main-comparison}\,\textup{(i)},
$
   b=a+c
$
for some order unit $c\in T(\alpha)$. Applying $\pi$ gives
\[
   \pi(c)
   =
   \pi(b)-\pi(a)
   =
   [f^+]_H-[f^-]_H
   =
   [f]_H
   =
   h.
\]
Thus $h\in\pi(T(\alpha)^{\mathrm{ou}})$. Notice that no injectivity
of $\pi$ is used.

For \textup{(iii)}, let $A,B\subseteq X$ be clopen and suppose that
$
   \widehat A<\widehat B
 $ on $Q=\cM_\Gamma(X).$
By Theorem~\ref{thm:main-comparison}\,\textup{(i)},
$
   [A]\leq[B].
$
Hence, by the definition of the order on $T(\alpha)$, there is a
clopen set $D\subseteq B$ such that
$
   [D]=[A].
$
Every invariant measure therefore satisfies
$
   \mu(D)=\mu(A),
$
so
$
   \widehat D=\widehat A.
$
Thus the affine evaluation map associated with $Q$ is good.
\end{proof}

\begin{remark}\label{rem:nm-almost-unperforation-scope}
Corollary~\ref{cor:nm-direct-consequences}\,\textup{(i)} proves the
almost-unperforation implication when the target is an order unit.
In the nonminimal setting, however, a nonzero type $b$ may satisfy
$\mu(b)=0$ for some invariant probability measure. In that case,
$
   (n+1)a\leq nb
$
does not yield the strict inequalities
$\mu(a)<\mu(b)$ required to apply
Theorem~\ref{thm:main-comparison} \textup{(i)}.

For minimal actions every nonzero type is an order unit, so the
corollary recovers full almost unperforation. 
\end{remark}

\subsection{Algebraic comparison for minimal actions}
\label{subsec:regularity}
We now assume that $X$ is a Cantor space and that $\alpha$ is minimal.
Then $T(\alpha)$ is simple, so
Theorem~\ref{thm:main-three-two} can be iterated against the order unit
$u=[X]$ to obtain weak comparability.
The standard results collected in
Theorem~\ref{thm:known-type-results} then yield almost unperforation
and cancellation, proving
Theorem~\ref{thm:main-comparison}\,\textup{(ii)}.

\begin{proposition}\label{prop:weak-comparability}
Let $\alpha:\Gamma\acts X$ be a minimal Cantor action of a
countably infinite discrete amenable group. 
Then $T(\alpha)$ has weak comparability.
\end{proposition}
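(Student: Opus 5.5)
Weak comparability with respect to $u=[X]$ requires: for every nonzero $a\in T(\alpha)$ there is $k\geq1$ such that $kb\leq u$ implies $b\leq a$ for all $b$. The plan is the iteration sketched in the introduction. Let $a\neq0$. Since $\alpha$ is minimal, $T(\alpha)$ is simple, so $a$ is an order unit. Alternatively, Lemma~\ref{lem:nm-order-unit-test}: invariant measures have full support, so $\mu(a)>0$ for every $\mu$. Concretely, if $A$ represents $a$, the set $\{m_A>0\}$ is a nonempty clopen set whose translates cover $X$ by minimality, and compactness gives $u\leq m a$ for some $m\geq1$. Choose $h\geq1$ with $2^h\geq m$, so that $u\leq 2^h a$, and set $k:=3^h$.

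Now let $b$ satisfy $3^h b\leq u$. Then $3^h b\leq 2^h a$. I show by downward induction that $3^{j}2^{h-j} b\leq 2^h a$ for $j=h,h-1,\dots,0$, after which one further step gives $b\leq a$. The cleaner route is to prove the following statement by induction on $j$: if $3^j x\leq 2^j y$, then $x\leq y$. For $j=1$ this is Theorem~\ref{thm:main-three-two}. For $j\geq2$, write $3^j x=3\cdot(3^{j-1}x)$ and $2^j y=2\cdot(2^{j-1}y)$. Theorem~\ref{thm:main-three-two} applied with $3^{j-1}x$ and $2^{j-1}y$ in the roles of $a$ and $b$ gives $3^{j-1}x\leq 2^{j-1}y$. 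The induction hypothesis then gives $x\leq y$. Applying this with $j=h$, $x=b$, $y=a$ yields $b\leq a$, as required.

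No step is a real obstacle; the content lies in Theorem~\ref{thm:main-three-two}. The only point needing care is the existence of $m$ with $u\leq ma$, that is, simplicity of $T(\alpha)$. This is cited from Melleray in the preliminaries and also follows from Lemma~\ref{lem:nm-order-unit-test}(iii) together with minimality. One should also check that Theorem~\ref{thm:main-three-two} applies to arbitrary elements of $T(\alpha)$, such as the multiples $3^{j-1}x$ and $2^{j-1}y$, not just types of clopen subsets of $X$; it does, since it is stated for all $a,b\in T(\alpha)$.
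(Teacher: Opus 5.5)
Your proposal is correct and matches the paper's proof essentially step for step: simplicity gives $u\leq ma\leq 2^h a$, you set $k=3^h$, and Theorem~\ref{thm:main-three-two} applied to $3(3^{j-1}b)\leq 2(2^{j-1}a)$ is iterated down to $b\leq a$. Your induction on $j$ is exactly the paper's ``iterating this implication''; the first formulation you abandoned, $3^j2^{h-j}b\leq 2^h a$, is not needed.
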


\begin{proof}
 Put
\[
   M=\Talpha
   \qquad\text{and}\qquad
   u=[X].
\]
It suffices to prove that for every $0\neq a\in M$, there exists $k\geq1$ such that
\[
   kb\leq u
   \quad\Longrightarrow\quad
   b\leq a
   \qquad(b\in M).
\]

Fix $0\neq a\in M$.  Since $M$ is simple \cite[Definition~2.3 and the following paragraph]{Melleray}, $a$ is an order
unit.  Hence there exists $m\geq1$ such that
\[
   u\leq ma.
\]

Choose $h\geq1$ such that
$
   m\leq2^h,
$
and set
$
   k:=3^h.
$
Let $b\in M$ satisfy
$
   kb\leq u.
$
Then
\begin{equation}\label{eq:weak-comparability-chain}
   3^hb
   =
   kb
   \leq u
   \leq ma
   \leq2^ha.
\end{equation}
Applying Theorem~\ref{thm:main-three-two} to \eqref{eq:weak-comparability-chain}
gives
\[
   3^{h-1}b\leq2^{h-1}a.
\]
Iterating this
implication yields
\[
   b\leq a.
\]
Thus, $M$ has weak comparability.
\end{proof}

Proposition~\ref{prop:weak-comparability}, together with
Theorem~\ref{thm:known-type-results}, proves
Theorem~\ref{thm:main-comparison}\,\textup{(ii)}.
\begin{proof}[Proof of
Theorem~\ref{thm:main-comparison} \textup{(ii)}]

By Proposition~\ref{prop:weak-comparability},
$\Talpha$ has weak comparability.  Hence
Theorem~\ref{thm:known-type-results} \textup{(i)} implies that
$\Talpha$ is almost unperforated.
Since $\Gamma$ is amenable and $X$ is nonempty and compact,
$
   \cM_\Gamma(X)\neq\varnothing.
$
Therefore Theorem~\ref{thm:known-type-results} \textup{(ii)} gives
cancellation of $\Talpha$. 
\end{proof}

 \subsection{Failure of the structural conclusions without minimality}
\label{subsec:nonminimal-counterexample}

The comparison conclusion of
Theorem~\ref{thm:main-comparison}\,\textup{(i)} holds
without minimality. In contrast, cancellation, the assertions in
\textup{(iii)}, and the exact full-group conclusion in
\textup{(iv)} do not admit an unconditional nonminimal extension.
The following example distinguishes each of these failures, even
for a free Cantor $\mathbb Z$-action.

The construction below follows the two-point compactification
modification suggested by Melleray in his corrections
\cite{MellerayCorrections} to the example in
\cite[Section~2.2]{Melleray}. We include the details and record
some further properties of this example: besides noncancellativity,
the coinvariant cone is not proper, and equality of clopen types
need not be realized by an element of the topological full group.

\begin{proposition}\label{prop:nm-structural-counterexample}
There exists a free nonminimal Cantor $\mathbb Z$-action $\alpha$
such that:
\begin{enumerate}[label=\textup{(\roman*)}]
\item $T(\alpha)$ is not cancellative and
      $\pi:T(\alpha)\to H(\alpha)^+$ is not injective;
\item $H(\alpha)^+$ is not a proper cone;
\item there are nonempty proper clopen sets $A,B\subseteq X$ with
      $[A]=[B]$ but $g(A)\neq B$ for every $g\in[[\alpha]]$.
\end{enumerate}
\end{proposition}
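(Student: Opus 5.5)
The plan is to build the example explicitly, following Melleray's corrected two-point compactification, and then verify (i)--(iii) by hand. Fix a free minimal Cantor system $(O,S)$, for instance the dyadic odometer, with unique invariant measure $\nu$. Set
\[
   X:=(\mathbb Z\times O)\sqcup O_-\sqcup O_+ ,
\]
where $O_\pm$ are copies of $O$. Topologize $X$ so that each $\{n\}\times U$ is clopen, and so that the sets $\{(n,y):n\geq N,\ y\in U\}\cup(\{+\}\times U)$ form a neighbourhood basis at the points of $O_+$, with the analogous sets for $n\le -N$ at $O_-$. Define
\[
   T(n,y)=(n+1,Sy),\qquad T=S\ \text{on }O_\pm .
\]
The first checks are routine. $X$ is compact, metrizable and zero-dimensional, and it has no isolated points because $O$ has none. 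The map $T$ is a homeomorphism. The action is free, since $S$ is free on $O_\pm$ and the $n$-coordinate moves on $\mathbb Z\times O$. It is nonminimal, since $O_+$ is closed and invariant. The wandering part $\mathbb Z\times O$ is null for every invariant measure, so $\cM_{\mathbb Z}(X)$ is the segment spanned by $\nu_-$ and $\nu_+$.

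For (i), put
\[
   P:=\{(n,y):n\geq0\}\cup O_+,\qquad A_0:=\{0\}\times O .
\]
Both are clopen. We have $TP=P\setminus A_0$, and $T$ itself witnesses $P\sim TP$. Hence
\[
   [A_0]+[P]=[P]=0+[P],
\]
while $[A_0]\neq0$ because $T(\alpha)$ is conical and $A_0\neq\varnothing$. So $T(\alpha)$ is not cancellative. At the level of coinvariants, $\one_{A_0}=\one_P-T\cdot\one_P\in B_\alpha$, so $\pi([A_0])=0=\pi(0)$. This shows that $\pi$ is not injective, which is also consistent with Proposition~\ref{prop:known-coinvariant}.

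For (iii), take nonempty proper clopen sets $A,B$ with $[A]=[B]$ but with different complements up to the full group. One candidate is $A:=P$ and $B:=TP=P\setminus A_0$, so $[A]=[B]$. Any $g\in[[\alpha]]$ is given by finitely many powers $T^{k_i}$, so $|n(gx)-n(x)|\le K$ on $\mathbb Z\times O$. One then compares $X\setminus A$ and $X\setminus B$. The complement of $B$ contains the extra wandering level $A_0$, and a bounded-displacement homeomorphism preserves a suitable ``flux across level $0$'' invariant. This invariant is the net number of wandering points moved from $\{n<0\}$ to $\{n\geq0\}$, computed fibrewise over $O$ via $\nu$, and it is zero for every element of $[[\alpha]]$. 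Since $g(A)=B$ would force a flux of $\nu(O)=1$, no such $g$ exists. If the complements are too symmetric for this to work, I would instead take $B:=P\cup A_{-1}$ with $A_{-1}:=\{-1\}\times O$. The bookkeeping step is to define the flux via the cocycle $x\mapsto k_i$ integrated over a fibre, and to show that it is additive and vanishes on $[[\alpha]]$.

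The main obstacle is (ii), because the obvious wandering classes are coboundaries and so give $0$ rather than a nonzero element of $H^+\cap(-H^+)$. My plan is to compute $H(\alpha)$ through the restriction map to $C(O_-,\mathbb Z)\oplus C(O_+,\mathbb Z)$, together with the compactly supported wandering part, which collapses modulo $B_\alpha$. Any nonzero $h\in H^+\cap(-H^+)$ must vanish under $\nu_\pm$, so it must come from an infinitesimal. If this plain construction has a proper cone, I would modify the fibre so that an infinitesimal appears: use different minimal systems at $-\infty$ and $+\infty$, or glue the two ends through a nontrivial clopen twist $y\mapsto\psi(y)$. I would then look for $f,g\geq0$ and a bounded $k$, for instance an indicator of a half-line region over a clopen $U\subseteq O$ whose ends differ, such that $f+g=k-T\cdot k$ while $[f]_H\neq0$. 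The point is to certify $[f]_H\neq0$ by an explicit homomorphism $H(\alpha)\to\mathbb Z$ that is not given by a measure. I expect constructing that homomorphism, and checking that it kills $B_\alpha$, to be the hardest and most delicate step.
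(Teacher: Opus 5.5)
Your space and action are the same as the paper's (the two-end compactification of $\mathbb Z$ times the odometer, with $T(n,y)=(n+1,Sy)$), and your argument for (i) is exactly the paper's. One small correction there: $[A_0]\neq0$ holds simply because a nonempty set is not equidecomposable with $\varnothing$, not because of conicality.

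Part (iii), however, fails as proposed. Both of your candidates satisfy $B=T^{\pm1}(A)$, since $TP=T(P)$ and $P\cup A_{-1}=T^{-1}(P)$. Because $T^{\pm1}\in[[\alpha]]$, the element $g=T^{\pm1}$ already gives $g(A)=B$. Your flux invariant does not rescue this, because it does not vanish on $[[\alpha]]$. On every wandering orbit, $T$ moves exactly one point (the one at level $-1$) from $\{n<0\}$ to $\{n\geq0\}$ and moves none back. So $T$ has flux $1$, which is precisely the value you computed that $g(A)=B$ would force. Flux is additive on bounded-displacement bijections; it is not an invariant that vanishes.

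What you need instead is a finite count on orbits: sets whose complements meet each wandering orbit in different finite numbers of points. The paper takes $A=X\setminus C_0$ and $B=X\setminus(C_0\cup C_1)$, with $C_j=\{j\}\times O$. Then $[A]=[B]$, witnessed by $T$ on $(\{n\geq1\}\times O)\cup O_+$ and the identity on $(\{n\leq-1\}\times O)\cup O_-$. But $g(A)=B$ would force $g(C_0)=C_0\cup C_1$. This is impossible: every $g\in[[\alpha]]$ restricts to a bijection of each $T$-orbit, and each wandering orbit meets $C_0$ in one point but $C_0\cup C_1$ in two.

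For (ii) you give no proof. Your worry that the plain construction might have a proper cone rests on a false premise. Modulo $B_\alpha$, a compactly supported wandering function dies only when its orbit sums $y\mapsto\sum_j f(T^j(0,y))$ are constant, as for the full level $\one_{C_0}$. This is exactly what makes the cone non-proper.

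Take a nonempty proper clopen $W\subset O$ and put $h=[\one_{\{0\}\times W}]_H\in H(\alpha)^+$. Since $[\one_{C_0}]_H=0$, also $-h=[\one_{\{0\}\times(O\setminus W)}]_H\in H(\alpha)^+$. It remains to show $h\neq0$. Suppose $\one_{\{0\}\times W}=f-f\circ T^{-1}$ with $f\in C(X,\mathbb Z)$. On $O_\pm$ the left side vanishes, so $f|_{O_\pm}$ is $S$-invariant, hence constant, say equal to $a_\pm$, by minimality. Continuity and compactness then give $N$ with $f\equiv a_\pm$ on $\{\pm n\geq N\}\times O$. Summing the identity over $T^j(0,y)$ for $-N\leq j\leq N$ telescopes to $\one_W(y)=a_+-a_-$ for every $y\in O$. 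This contradicts $W$ being nonempty and proper.

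This orbit-segment sum is the certificate not coming from a measure that you were looking for. No modification of the fibre or of the end systems is needed.
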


\begin{proof}
Let
\[
   Y:=\mathbb Z\cup\{-\infty,+\infty\}
\]
be the two-end compactification of $\mathbb Z$, let
$\tau(j)=j+1$ and $\tau(\pm\infty)=\pm\infty$, and let
$R(k)=k+1$ be the odometer on the $2$-adic integers $K$. Set
\[
   X:=Y\times K,
   \qquad
   T:=\tau\times R\qquad\text{and}\qquad\alpha(n):=T^n.
\]
Then $X$ is a Cantor space and $T$ is free. It is not minimal,
since the two end fibres $\{\pm\infty\}\times K$ are proper closed
invariant subsets.

For $j\in\mathbb Z$, put
\[
   C_j:=\{j\}\times K,
   \qquad
   D_+:=\bigl(\{0,1,2,\ldots\}\cup\{+\infty\}\bigr)\times K.
\]
Since
$
   T(D_+)=D_+\setminus C_0,
$
if $a=[D_+]$ and $c=[C_0]$, then
\[
   a=a+c,
   \qquad\text{but }
   c\neq0.
\]
Thus $T(\alpha)$ is not cancellative. Moreover,
\[
   \one_{C_0}
   =
   \one_{D_+}-\one_{D_+}\circ T^{-1},
\]
so $\pi(c)=0$, and hence $\pi$ is not injective.

To prove \textup{(ii)}, choose a nonempty proper clopen set
$W\subset K$ and put
\[
   h:=[\one_{\{0\}\times W}]_H.
\]
Since $[\one_{C_0}]_H=0$,
$
   -h=[\one_{\{0\}\times(K\setminus W)}]_H\in H(\alpha)^+.
$
It remains to show that $h\neq0$. If $h=0$, then, since for a
$\mathbb Z$-action every coboundary is of the form
$f-f\circ T^{-1}$,
\[
   \one_{\{0\}\times W}=f-f\circ T^{-1}
\]
for some $f\in C(X,\mathbb Z)$. Restricting the coboundary identity
to the two end fibres gives
\[
   f(\pm\infty,k)
   =
   f(\pm\infty,R^{-1}k)
   \qquad(k\in K),
\]
since $\{0\}\times W$ is disjoint from
$\{\pm\infty\}\times K$. Thus the functions
$
   f_\pm(k):=f(\pm\infty,k)
$
are $R$-invariant. Since the odometer $R$ is minimal and
$f_\pm$ are continuous, they are constant. Write
$
   f_+\equiv a_+,
$ and $
   f_-\equiv a_-,
$
for some $a_+,a_-\in\mathbb Z$.

Since $f$ is continuous and integer-valued, the sets
$
   \{x\in X:f(x)=a_+\},
 $ and $   \{x\in X:f(x)=a_-\}
$
are clopen neighborhoods of
$\{+\infty\}\times K$ and $\{-\infty\}\times K$, respectively.
By compactness of $K$, there exists $N\geq1$ such that
\[
   f(j,k)=a_+
   \quad(j\geq N,\ k\in K)
\qquad\text{and}\qquad
   f(j,k)=a_-
   \quad(j\leq -N,\ k\in K).
\]

Fix $k\in K$ and sum the coboundary identity along the orbit segment
$
  \{T^j(0,k)\}_{j=-N}^ N.
$
Since
$
   T^j(0,k)=(j,R^jk),
$
the orbit segment meets $\{0\}\times W$ only when $j=0$.
Hence
\[
   \sum_{j=-N}^{N}
   \one_{\{0\}\times W}\bigl(T^j(0,k)\bigr)
   =
   \one_W(k).
\]
On the other hand,
\begin{align*}
\sum_{j=-N}^{N}
\bigl(f-f\circ T^{-1}\bigr)\bigl(T^j(0,k)\bigr)
&=
\sum_{j=-N}^{N}
\Bigl(
   f\bigl(T^j(0,k)\bigr)
   -
   f\bigl(T^{j-1}(0,k)\bigr)
\Bigr)\\
&=
f\bigl(T^N(0,k)\bigr)
-
f\bigl(T^{-N-1}(0,k)\bigr)\\
&=
a_+-a_-.
\end{align*}
Therefore
\[
   \one_W(k)=a_+-a_-
   \qquad(k\in K).
\]
The right-hand side is independent of $k$, so $\one_W$ is constant.
This is impossible, as $W$ is nonempty and proper. Hence
$h\neq0$, and therefore
\[
   0\neq h\in H(\alpha)^+\cap(-H(\alpha)^+).
\]
So $H(\alpha)^+$ is not a proper cone.

Finally, let
\[
   A:=X\setminus C_0,
   \qquad
   B:=X\setminus(C_0\cup C_1).
\]
Define $\varphi:A\to B$ by
\[
   \varphi(x)=
   \begin{cases}
      T(x),
      &x\in
      \bigl(\{1,2,\ldots\}\cup\{+\infty\}\bigr)\times K,\\
      x,
      &x\in
      \bigl(\{\ldots,-2,-1\}\cup\{-\infty\}\bigr)\times K.
   \end{cases}
\]
Then $\varphi$ is a finite clopen equidecomposition from $A$ onto
$B$, and hence $[A]=[B]$. Suppose, towards a contradiction, that there exists
$g\in[[\alpha]]$ with $g(A)=B$. Since $g$ is a homeomorphism,
\[
   g(C_0)
   =
   g(X\setminus A)
   =
   X\setminus B
   =
   C_0\cup C_1.
\]
Every element of $[[\alpha]]$ preserves each $\mathbb Z$-orbit
setwise. Fix $k\in K$ and let
\[
   \mathcal O_k:=\{T^n(0,k):n\in\mathbb Z\}.
\]
Then
\[
   |\mathcal O_k\cap C_0|=1,
   \qquad
   |\mathcal O_k\cap(C_0\cup C_1)|=2.
\]
But the restriction of $g$ to $\mathcal O_k$ is a bijection, so it
cannot map $\mathcal O_k\cap C_0$ onto
$\mathcal O_k\cap(C_0\cup C_1)$. This contradiction proves that no
$g\in[[\alpha]]$ satisfies $g(A)=B$.
\end{proof}

\section{Coinvariants and affine evaluation}
\label{sec:consequences}
Throughout this section, $\alpha:\Gamma\acts X$ is a minimal
Cantor action of a countably infinite discrete amenable group, and
$
   Q:=\cM_\Gamma(X).
$

In this section, we begin by
using the cancellation obtained in Theorem \ref{thm:main-comparison}~\textup{(ii)} to prove the
remaining Theorem \ref{thm:main-comparison}~\textup{(iii)--(iv)}. 
We next use the comparison conclusion of
Theorem~\ref{thm:main-comparison}~\textup{(i)} to prove
Theorem~\ref{thm:main-aem}. It gives goodness of the affine
evaluation map associated with $Q$, yields a minimal homeomorphism
of $X$ with exactly the same invariant probability measures as the
original action, and identifies the ordered quotient
$
   H(\alpha)/\Inf(H(\alpha))
$
with the ordered group $G_Q$ determined by affine evaluation.

Having established Theorem~\ref{thm:main-aem}, we then turn to the
information lost in passing from clopen equidecomposition to affine
evaluation. We show that this loss is measured precisely by the
infinitesimal subgroup $\Inf(H(\alpha))$. In particular,
infinitesimals describe the obstruction to recovering clopen
equidecomposability from invariant measures alone, and they
parametrize the $[[\alpha]]$-orbits inside each nontrivial fibre of
the affine evaluation map.

\subsection{Proof of Theorem \ref{thm:main-comparison} (iii)-(iv)}

We begin with Theorem~\ref{thm:main-comparison} \textup{(iii)}.
The cancellation of $T(\alpha)$ established in
Theorem~\ref{thm:main-comparison} \textup{(ii)} turns the canonical
surjection onto $H(\alpha)^+$ into an isomorphism.

\begin{proof}[Proof of Theorem~\ref{thm:main-comparison} \textup{(iii)}]
Let
\[
   \pi:\Talpha\to\Halpha^+,
   \qquad
   \pi([f]_T)=[f]_H
\]
be the canonical map.  It is additive and
surjective.  By Theorem~\ref{thm:main-comparison} \textup{(ii)}, $\Talpha$ is
cancellative, and hence
Proposition \ref{prop:known-coinvariant} gives injectivity of $\pi$.
Therefore $\pi$ is a monoid isomorphism. 

Since $T(\alpha)$ is conical, $H(\alpha)^+$ is a proper cone.
Indeed, if $h,-h\in H(\alpha)^+$, choose $a,b\in T(\alpha)$
with
\[
   \pi(a)=h,\qquad \pi(b)=-h.
\]
Then $\pi(a+b)=0$, so $a+b=0$.  Conicality gives
$a=b=0$, and hence $h=0$.

Thus $H(\alpha)^+$ defines a partial order on $H(\alpha)$.
Moreover, for $a,b\in T(\alpha)$,
\begin{align*}
   a\leq b
   &\Longleftrightarrow
   b=a+c\text{ for some }c\in T(\alpha)\\
   &\Longleftrightarrow
   \pi(b)-\pi(a)=\pi(c)\in H(\alpha)^+\\
   &\Longleftrightarrow
   \pi(a)\leq\pi(b).
\end{align*}
Hence $\pi$ is an isomorphism of ordered monoids.
\end{proof}

The identification in
Theorem~\ref{thm:main-comparison} \textup{(iii)}, together with the
strict comparison in \textup{(i)}, gives a more concrete
description of the positive cone of $H(\alpha)$ and of the order
intervals determined by clopen sets. We record these consequences
for later use.

\begin{lemma}\label{lem:coinvariant-order-scale}
The canonical positive cone of the coinvariant group satisfies
\begin{equation}\label{eq:coinvariant-strict-cone}
   \Halpha^+
   =
   \{0\}\cup
   \{h\in\Halpha:
      \widehat h(\mu)>0\text{ for every }\mu\in Q\}.
\end{equation}
Moreover, for every clopen $U\subseteq X$,
\begin{equation}\label{eq:coinvariant-relative-scale}
   [0,[\one_U]_H]_{\Halpha}
   =
   \{[\one_A]_H:A\subseteq U\text{ is clopen}\}.
\end{equation}
\end{lemma}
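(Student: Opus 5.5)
For the cone identity \eqref{eq:coinvariant-strict-cone}, I will prove the two inclusions separately. For ``$\subseteq$'', take $h\in\Halpha^+$ and write $h=\pi(c)$ with $c\in\Talpha$. If $c=0$ then $h=0$. Otherwise minimality makes $\Talpha$ simple, so $c$ is an order unit, and Lemma~\ref{lem:nm-order-unit-test} gives $\widehat h(\mu)=\mu(c)>0$ for every $\mu\in Q$. For ``$\supseteq$'', the element $0$ lies in the cone trivially. Any $h$ with $\widehat h>0$ on $Q$ lies in $\pi(T(\alpha)^{\mathrm{ou}})\subseteq\Halpha^+$ by Corollary~\ref{cor:nm-direct-consequences}\,(ii). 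This direction needs neither injectivity of $\pi$ nor minimality.

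For the scale identity \eqref{eq:coinvariant-relative-scale}, the order on $\Halpha$ is the one defined by the proper cone from Theorem~\ref{thm:main-comparison}\,(iii). The inclusion ``$\supseteq$'' is immediate: if $A\subseteq U$ is clopen, then $[\one_A]_H\in\Halpha^+$ and $[\one_U]_H-[\one_A]_H=[\one_{U\setminus A}]_H\in\Halpha^+$.

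The converse is the only substantive point. Take $h$ with $0\le h\le[\one_U]_H$ and use the isomorphism $\pi$ of ordered monoids from Theorem~\ref{thm:main-comparison}\,(iii). Since $h\in\Halpha^+$, there is $a\in\Talpha$ with $\pi(a)=h$. Since $[\one_U]_H-h\in\Halpha^+$, there is $c$ with $\pi(c)=[\one_U]_H-h$, and then $\pi(a+c)=\pi([U])$. Injectivity of $\pi$ gives $a+c=[U]$, so $a\le[U]$ in $\Talpha$. By the definition of the order on the type semigroup, a representative of $a$ is equidecomposable with a clopen subset of $U\times\{1\}$, that is, with $A\times\{1\}$ for some clopen $A\subseteq U$. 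Hence $a=[A]$ and $h=\pi([A])=[\one_A]_H$.

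I expect no real obstacle, since the work is already done by cancellation (Theorem~\ref{thm:main-comparison}\,(ii)--(iii)) and by comparison (Corollary~\ref{cor:nm-direct-consequences}). Two points need care. First, injectivity of $\pi$ is what converts the order in $\Halpha$ into the order in $\Talpha$. Second, the fact that a subtype of a single-sheet set $U$ is realized by a clopen subset of $X$ inside $U$ uses that the chosen representative of $[U]$ lies in one sheet.
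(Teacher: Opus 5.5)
Your proof is correct and follows the paper's argument. The reverse cone inclusion is the same $f=f^+-f^-$ comparison argument, which you import via Corollary~\ref{cor:nm-direct-consequences}\,(ii), and the scale identity is proved essentially verbatim from the surjectivity and injectivity of $\pi$. The only difference is cosmetic: for $\Halpha^+\subseteq\{0\}\cup\{h:\widehat h>0\}$ the paper uses full support of the invariant measures directly (a nonzero $f\geq0$ has positive integral), whereas you route the same minimality input through simplicity of $\Talpha$ and Lemma~\ref{lem:nm-order-unit-test}.
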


\begin{proof}
Let $0\neq h\in\Halpha^+$.  Write
   $h=[f]_H$ for some $f\in C(X,\mathbb Z_{\geq0})$.
Then $f\not\equiv0$.  Since every measure in $Q$ has full support,
\[
   \widehat h(\mu)
   =
   \int_X f\,d\mu
   >0
   \qquad(\mu\in Q).
\]

Conversely, let $h=[f]_H\in\Halpha$ satisfy
$
   \widehat h(\mu)>0$ for all $\mu\in Q.$
Write
$
   f=f^+-f^-,
$
and put
$
   a=[f^-]_T,
 $ and $
   b=[f^+]_T.
$
Then, for every $\mu\in Q$, by \eqref{eq:formu}
\[
   \mu(b)-\mu(a)
   =
   \int_X f\,d\mu
   =
   \widehat h(\mu)
   >0.
\]
Hence Theorem~\ref{thm:main-comparison} \textup{(i)} gives
$
   a\leq b.
$
Write $b=a+c$ for some $c\in\Talpha$.  Using
Theorem~\ref{thm:main-comparison} \textup{(iii)},
\[
   h
   =
   [f^+]_H-[f^-]_H
   =
   \pi(b)-\pi(a)
   =
   \pi(c)
   \in\Halpha^+.
\]
This proves \eqref{eq:coinvariant-strict-cone}.

Now let
$
   0\leq h\leq[\one_U]_H.
$
By surjectivity of
$
   \pi:\Talpha\to\Halpha^+,
$
choose $a,c\in\Talpha$ such that
\[
   \pi(a)=h
   \qquad\text{and}\qquad
   \pi(c)=[\one_U]_H-h.
\]
Then
\[
   \pi(a+c)
   =
   [\one_U]_H
   =
   \pi([U]).
\]
By injectivity of $\pi$,
\[
   a+c=[U].
\]
Therefore, there is a clopen set $A\subseteq U$ with
$[A]=a$.  Hence
\[
   h=\pi(a)=[\one_A]_H.
\]

Conversely, if $A\subseteq U$ is clopen, then
$
   [\one_A]_H\in\Halpha^+
$
and
\[
   [\one_U]_H-[\one_A]_H
   =
   [\one_{U\setminus A}]_H
   \in\Halpha^+.
\]
Hence
\[
   0\leq[\one_A]_H\leq[\one_U]_H,
\]
which proves \eqref{eq:coinvariant-relative-scale}.
\end{proof}

We next prove Theorem~\ref{thm:main-comparison} \textup{(iv)}.
The key point is that cancellation allows an equidecomposition
between two clopen sets to be completed by an equidecomposition
between their complements, producing an element of the topological
full group.

\begin{proof}[Proof of Theorem~\ref{thm:main-comparison} \textup{(iv)}]
Let $A,B\subseteq X$ be clopen and suppose
$
   [A]=[B]
 $ in $\Talpha.$
Choose a finite clopen piecewise-$\Gamma$ bijection
$
   \varphi:A\to B
$
witnessing this equality.  Since
\[
   [A]+[X\setminus A]
   =
   [B]+[X\setminus B]
   =
   [X],
\]
cancellation gives
\[
   [X\setminus A]=[X\setminus B].
\]
Choose a finite clopen piecewise-$\Gamma$ bijection
$
   \psi:X\setminus A\to X\setminus B.
$
Then
$
   g:=\varphi\sqcup\psi
$
is a homeomorphism of $X$, belongs to $[[\alpha]]$, and satisfies
$g(A)=B$.
\end{proof}

Consequently, for clopen $A,B\subseteq X$,
\begin{equation}\label{eq:pair-equidecomposition}
   [\one_A]_H=[\one_B]_H
   \Longleftrightarrow
   A\sim_\Gamma B
   \Longleftrightarrow
   g(A)=B\text{ for some }g\in[[\alpha]].
\end{equation}
The first equivalence follows from
Theorem~\ref{thm:main-comparison} \textup{(iii)}, and the second from
\textup{(iv)}.

\subsection{Affine evaluation and the ordered quotient}

We now prove Theorem~\ref{thm:main-aem}.  Comparison gives goodness
of the affine evaluation map, and Proposition \ref{prop:known-realization} then
gives the required minimal homeomorphism.  The ordered quotient is
identified using Lemma~\ref{lem:coinvariant-order-scale}.

\begin{proof}[Proof of Theorem~\ref{thm:main-aem} \textup{(i)--(ii)}]
We only need to prove (i), as (ii) is a corollary of (i) and  Proposition \ref{prop:known-realization}.

Let $A,B\subseteq X$ be clopen and suppose
$
   \mu(A)<\mu(B)
$ for all $\mu\in Q.$
If $A=\varnothing$, take $D=\varnothing$.  Otherwise, dynamical
comparison gives a clopen partition
$
   A=\bigsqcup_{i=1}^n A_i
$
and elements $\gamma_1,\ldots,\gamma_n\in\Gamma$ such that the sets
$\gamma_iA_i$ are pairwise disjoint subsets of $B$.  Put
\[
   D:=\bigsqcup_{i=1}^n\gamma_iA_i.
\]
Then $D\subseteq B$ is clopen and, for every $\mu\in Q$,
\[
   \mu(D)
   =
   \sum_{i=1}^n\mu(\gamma_iA_i)
   =
   \sum_{i=1}^n\mu(A_i)
   =
   \mu(A).
\]
Thus the affine evaluation map associated with $Q$ is good.
\end{proof}

We now turn to the
ordered quotient in  \textup{(iii)}.  The identity on
$C(X,\mathbb Z)$ induces a natural map from $H(\alpha)$ onto $G_Q$, and
the point is to identify its kernel with $\Inf(H(\alpha))$ and then
use Lemma~\ref{lem:coinvariant-order-scale} to identify the induced
order and clopen scale.

 \begin{proof}[Proof of Theorem~\ref{thm:main-aem} \textup{(iii)}]
Put
$
   I:=\Inf(\Halpha),
$
let
$
   q:\Halpha\to\Halpha/I
$
be the quotient map, and write
$
   \bar u:=q([\one_X]_H).
$

Every coboundary $f-\gamma\cdot f$ integrates to zero against every
measure in $Q$.  Hence
$
   B_\alpha\subseteq N_Q,
$
and the identity on $C(X,\mathbb Z)$ induces a surjective
homomorphism
\[
   \Phi:\Halpha\to G_Q,
   \qquad
   \Phi([f]_H)=[f]_Q.
\]
Its kernel is
\begin{equation}\label{eq:kernel}
   \ker\Phi
   =
   N_Q/B_\alpha
   =
   I.    
\end{equation}
Thus $\Phi$ induces a group isomorphism
\[
   \overline\Phi:\Halpha/I\longrightarrow G_Q.
\]

We claim that this is an order isomorphism.  By
Lemma~\ref{lem:coinvariant-order-scale}, a nonzero
$h\in\Halpha$ belongs to $\Halpha^+$ exactly when
$
   \widehat h(\mu)>0
$ for all $\mu\in Q.
$
This is precisely the definition of the nonzero elements of $G_Q^+$.
Hence
\[
   \Phi(\Halpha^+)=G_Q^+,
\]
so $\overline\Phi$ identifies the quotient cone
$q(\Halpha^+)$ with $G_Q^+$.  It also sends $\bar u$ to
$[\one_X]_Q$.

By Theorem \ref{thm:main-aem} \textup{(i)}, the affine
evaluation map associated with $Q$ is good.   Proposition~\ref{prop:known-aem-structure} therefore shows
that $G_Q$ is a countable simple dimension group, that its normalized
state space is canonically affinely homeomorphic to $Q$, and that
\[
   [0,[\one_X]_Q]
   =
   \{[\one_A]_Q:A\in\Clop(X)\}.
\]

Since
\[
   \overline\Phi\bigl(q([f]_H)\bigr)=[f]_Q,
   \qquad
   \overline\Phi(\bar u)=[\one_X]_Q,
\]
the order isomorphism $\overline\Phi$ carries the interval
$[0,\bar u]$ onto $[0,[\one_X]_Q]$.  Hence the clopen-scale
identity above becomes
\[
   [0,\bar u]
   =
   \{q([\one_A]_H):A\in\Clop(X)\}.
\]
Likewise, the normalized state on $G_Q$ corresponding to
$\mu\in Q$
pulls back through $\overline\Phi$ to the normalized state on
$\Halpha/I$ given by
\[
   q([f]_H)\longmapsto\int_X f\,d\mu.
\]
Thus the normalized state space of $(\Halpha/I,\bar u)$ is
canonically affinely homeomorphic to $Q$, and the quotient has the
full clopen scale.  This proves \textup{(iii)}.
\end{proof}

We next identify the subgroup measuring the difference between
coinvariant relations and equality of affine evaluations. We retain the notation
\[
   I=\Inf(\Halpha),\qquad
   q:\Halpha\to\Halpha/I,\qquad
   \bar u=q([\one_X]_H),
\]
as well as the maps $\Phi$ and $\overline\Phi$, from the preceding
proof.

\begin{corollary}\label{cor:infinitesimal-relations}
The relation subgroups satisfy
\[
   B_\alpha\subseteq J_Q=N_Q,
   \qquad
   \Inf(\Halpha)=N_Q/B_\alpha.
\]
Consequently,
\begin{equation}\label{eq:coinvariant-exact-sequence}
   0\longrightarrow\Inf(\Halpha)
   \longrightarrow\Halpha
   \xrightarrow{\ \Phi\ }G_Q
   \longrightarrow0,
   \qquad
   \Phi([f]_H)=[f]_Q,
\end{equation}
is exact\footnote{A sequence of homomorphisms is exact if the image of each
map equals the kernel of the next.}, and $\Inf(\Halpha)$ is generated by the classes
\[
   [\one_A-\one_B]_H
   \qquad(A\sim_Q B).
\]
\end{corollary}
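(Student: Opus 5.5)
The plan is to assemble the corollary from facts already proved, together with Proposition~\ref{prop:known-aem-structure}\,\textup{(iii)}.

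\emph{Step 1: the inclusions.} First, $B_\alpha\subseteq N_Q$: every generator $f-\gamma\cdot f$ integrates to zero against each $\mu\in Q$, by invariance. Next, Theorem~\ref{thm:main-aem}\,\textup{(i)} says the affine evaluation map of $Q$ is good. The standing assumptions of the section (minimal Cantor action, amenable $\Gamma$) show that $Q$ is a nonempty compact Choquet simplex of atomless, full-support measures; these facts were recorded in Subsection~\ref{subsec:spaces}. So Proposition~\ref{prop:known-aem-structure}\,\textup{(iii)} applies and gives $J_Q=N_Q$, hence $B_\alpha\subseteq J_Q=N_Q$.

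\emph{Step 2: $\Inf(\Halpha)=N_Q/B_\alpha$ and exactness.} A class $[f]_H$ lies in $\Inf(\Halpha)$ exactly when $\int f\,d\mu=0$ for all $\mu\in Q$, that is, when $f\in N_Q$. Since $B_\alpha\subseteq N_Q$, this identifies $\Inf(\Halpha)$ with $N_Q/B_\alpha$. This is the identity \eqref{eq:kernel} from the proof of Theorem~\ref{thm:main-aem}\,\textup{(iii)}. Surjectivity of $\Phi$ is clear, since the identity on $C(X,\mathbb Z)$ induces it. Its kernel is $N_Q/B_\alpha=\Inf(\Halpha)$, so the sequence \eqref{eq:coinvariant-exact-sequence} is exact.

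\emph{Step 3: generators.} Since $N_Q=J_Q$ is generated by the functions $\one_A-\one_B$ with $A\sim_Q B$, its image $N_Q/B_\alpha$ in $\Halpha$ is generated by the classes $[\one_A-\one_B]_H$ with $A\sim_Q B$.

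The only nontrivial input is $J_Q=N_Q$. Everything else is bookkeeping, so the main thing to check carefully is that the hypotheses of Proposition~\ref{prop:known-aem-structure} hold. Compactness, the simplex property and metrizability of $Q$ need $X$ metrizable and $\Gamma$ amenable, and atomlessness and full support need minimality.
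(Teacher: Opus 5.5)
Your proposal is correct and follows the paper's proof essentially verbatim. The only nontrivial input is $J_Q=N_Q$, obtained from Proposition~\ref{prop:known-aem-structure}\,\textup{(iii)} via goodness; after that, $\Inf(\Halpha)=N_Q/B_\alpha$, exactness and the generator description are bookkeeping. The one small difference is in how you get $B_\alpha\subseteq J_Q$: the paper proves it directly by writing $f-\gamma\cdot f$ as an integer combination of functions $\one_A-\one_{\gamma A}$, which needs no goodness, whereas you prove $B_\alpha\subseteq N_Q$ by invariance and then pass through $J_Q=N_Q$. Both are fine here, since goodness is needed anyway.
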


\begin{proof}
Let $f\in C(X,\mathbb Z)$ and $\gamma\in\Gamma$.
Decomposing the finite range of $f$ shows that
$f-\gamma\cdot f$ is an integer linear combination of functions
of the form
$
   \one_A-\one_{\gamma A}.
$
Since
$
   \mu(A)=\mu(\gamma A)
$ for all $\mu\in Q,$
each such difference belongs to $J_Q$.  Hence
$
   B_\alpha\subseteq J_Q.
$
By Proposition~\ref{prop:known-aem-structure} \textup{(iii)},
goodness gives
$
   J_Q=N_Q.
$
The identity
$
   \Inf(\Halpha)=N_Q/B_\alpha
$
is from \eqref{eq:kernel}.
The exact sequence follows immediately, and the description of the
generators follows from the definition of $J_Q$.
\end{proof}

\subsection{Exact equidecomposition and infinitesimals}
\label{subsec:exact-equidecomposition}
The preceding exact sequence identifies the obstruction to replacing
equality of affine evaluation functions by an actual clopen
equidecomposition.  If $A,B\subseteq X$ are clopen and
$A\sim_Q B$, then
\[
   [\one_A-\one_B]_H\in\Inf(\Halpha),
\]
while \eqref{eq:pair-equidecomposition} shows that
$A\sim_\Gamma B$ exactly when this infinitesimal is zero.

We now characterize the vanishing of this obstruction in several
equivalent ways, in terms of the type semigroup, clopen
equidecomposition, the topological full group, and the relation
subgroups.

\begin{theorem}[Theorem~\ref{thm:main-infinitesimals} \textup{(i)}]\label{thm:exact-lifting}
The following conditions are equivalent:
\begin{enumerate}[label=\textup{(\roman*)}]
\item $\Inf(\Halpha)=0$;
\item the evaluation map
      \[
         \rho_Q:\Talpha\to\Aff(Q),
         \qquad
         \rho_Q(a)(\mu)=\mu(a),
      \]
      is injective;
\item for all clopen $A,B\subseteq X$,
      $A\sim_Q B$ implies $A\sim_\Gamma B$, equivalently
      $g(A)=B$ for some $g\in[[\alpha]]$;
\item
      $
         \overline{[[\alpha]]}=\mathcal H_Q,
      $
      where the closure is taken in $\Homeo(X)$ 
      with respect to the topology of uniform
      convergence of maps and inverses;
\item $
   B_\alpha=N_Q.
$
\end{enumerate}
\end{theorem}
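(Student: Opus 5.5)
The plan is to prove (i)$\Leftrightarrow$(v) first, then (i)$\Leftrightarrow$(ii), (i)$\Leftrightarrow$(iii), and finally (iii)$\Leftrightarrow$(iv).

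\emph{(i)$\Leftrightarrow$(v).} By Corollary~\ref{cor:infinitesimal-relations}, $\Inf(\Halpha)=N_Q/B_\alpha$, so $\Inf(\Halpha)=0$ exactly when $B_\alpha=N_Q$.

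\emph{(i)$\Leftrightarrow$(ii).} By Theorem~\ref{thm:main-comparison}\,(iii), $\pi:\Talpha\to\Halpha^+$ is an isomorphism, and $\rho_Q=\widehat{(\cdot)}\circ\pi$. If $\Inf=0$ and $\rho_Q(a)=\rho_Q(b)$, then $\pi(a)-\pi(b)$ is an infinitesimal, so $\pi(a)=\pi(b)$ and hence $a=b$. Conversely, suppose $\rho_Q$ is injective and $h\in\Inf(\Halpha)$. Write $h=[f^+]_H-[f^-]_H=\pi(b)-\pi(a)$. Then $\rho_Q(a)=\rho_Q(b)$, so $a=b$ and therefore $h=0$.

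\emph{(i)$\Leftrightarrow$(iii).} For clopen $A\sim_Q B$, the class $[\one_A-\one_B]_H$ lies in $\Inf$. By \eqref{eq:pair-equidecomposition}, $A\sim_\Gamma B$ iff this class vanishes, which gives (i)$\Rightarrow$(iii). For the converse, Corollary~\ref{cor:infinitesimal-relations} says $\Inf(\Halpha)$ is generated by the classes $[\one_A-\one_B]_H$ with $A\sim_Q B$. Each generator vanishes under (iii), so $\Inf=0$.

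\emph{(iii)$\Leftrightarrow$(iv).} This is the main obstacle. The inclusion $\overline{[[\alpha]]}\subseteq\mathcal H_Q$ holds because $\mathcal H_Q$ is closed: uniform limits preserve each $\mu$, by testing against continuous functions.

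For (iv)$\Rightarrow$(iii), let $A\sim_Q B$. Proposition~\ref{prop:known-aem-structure}\,(iv), available since goodness holds by Theorem~\ref{thm:main-aem}\,(i) and measures in $Q$ are atomless with full support, gives $h\in\mathcal H_Q$ with $h(A)=B$. Approximate $h$ uniformly by some $g\in[[\alpha]]$. Fix a compatible metric. Since $A$ and $B$ are clopen, for $g$ close enough to $h$ we get $g(A)=B$: points of $A$ lie at distance at least $\epsilon$ from $X\setminus A$, and $h(A)=B$, $h(X\setminus A)=X\setminus B$, so uniform closeness of $g$ and $h$ forces $g(A)\subseteq B$ and $g(X\setminus A)\subseteq X\setminus B$. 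Thus $A\sim_\Gamma B$.

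For (iii)$\Rightarrow$(iv), take $h\in\mathcal H_Q$ and $\epsilon>0$. Choose a finite clopen partition $\{P_i\}$ of small mesh such that $\{h(P_i)\}$ also has small mesh. We have $\widehat{P_i}=\widehat{h(P_i)}$, so (iii) gives $g_i\in[[\alpha]]$ with $g_i(P_i)=h(P_i)$. Gluing the restrictions $g_i|_{P_i}$ yields an element of $[[\alpha]]$, since it is piecewise $\Gamma$ on a finite clopen partition and is bijective because the $h(P_i)$ partition $X$. This element is within $\epsilon$ of $h$, and its inverse is within $\epsilon$ of $h^{-1}$ because the partition $\{h(P_i)\}$ is fine.
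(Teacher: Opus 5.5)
Your proof is correct and is essentially the paper's. The paper runs the cycle (i)$\Rightarrow$(ii)$\Rightarrow$(iii)$\Rightarrow$(i), whereas you arrange the implications around (i), proving (ii)$\Rightarrow$(i) directly via $h=\pi([f^+]_T)-\pi([f^-]_T)$; the ingredients are otherwise the same (Corollary~\ref{cor:infinitesimal-relations}, \eqref{eq:pair-equidecomposition}, Proposition~\ref{prop:known-aem-structure}\,(iv), and the fine-partition gluing).

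One cosmetic point: in (iv)$\Rightarrow$(iii), what makes closeness of $g$ to $h$ force $g(A)\subseteq B$ is $d(B,X\setminus B)>0$ (the separation of $A$ from $X\setminus A$ works only if you compare $g^{-1}$ with $h^{-1}$), and in (iii)$\Rightarrow$(iv) it is the fineness of $\{P_i\}$ that controls $g^{-1}$ against $h^{-1}$; both are available, so nothing breaks.
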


\begin{proof}
By Corollary~\ref{cor:infinitesimal-relations},
 (i) $\Leftrightarrow$ (v).

Assume \textup{(i)} holds and 
$
   \rho_Q(a)=\rho_Q(b).
$
Under the isomorphism of
Theorem~\ref{thm:main-comparison} \textup{(iii)}, $\pi(a)-\pi(b)\in\Inf(H(\alpha))$.  Hence $\pi(a)=\pi(b)$, and therefore $a=b$.  Thus
\textup{(ii)} holds.

If~\textup{(ii)} holds and $A\sim_Q B$, then
\[
   \rho_Q([A])=\rho_Q([B]),
\]
so $[A]=[B]$.  By
\eqref{eq:pair-equidecomposition},
$A\sim_\Gamma B$, proving~\textup{(iii)}.

Assume~\textup{(iii)}.  Every generator
$
   \one_A-\one_B
$
of $J_Q$ has $A\sim_Q B$, and hence $A\sim_\Gamma B$.
Therefore
\[
   J_Q\subseteq B_\alpha.
\]
By Corollary \ref{cor:infinitesimal-relations}, one has
$
   B_\alpha\subseteq J_Q=N_Q,
$
and hence $B_\alpha=N_Q$. So \textup{(i)} holds.

Every element of $[[\alpha]]$ preserves every measure in $Q$, and
$\mathcal H_Q$ is closed in $\Homeo(X)$.  Hence
\[
   \overline{[[\alpha]]}\subseteq\mathcal H_Q.
\]
Assume~\textup{(iii)} and fix $h\in\mathcal H_Q$.
Given $\varepsilon>0$, choose a finite clopen partition
$
   X=A_1\sqcup\cdots\sqcup A_k
$
such that
$
   \diam(A_i)<\varepsilon,
$ and  $
   \diam(h(A_i))<\varepsilon$
for every $i$.  For each $i$,
$
   A_i\sim_Q h(A_i),
$
so~\textup{(iii)} gives a finite clopen piecewise-$\Gamma$
bijection
$
   A_i\to h(A_i).
$
Gluing these maps gives $g\in[[\alpha]]$ satisfying
\[
   g(A_i)=h(A_i)
   \qquad(1\leq i\leq k).
\]
For $x\in A_i$, both $g(x)$ and $h(x)$ belong to $h(A_i)$, and 
hence
\[
   d(g(x),h(x))<\varepsilon.
\]
Likewise, for $y\in h(A_i)$, both $g^{-1}(y)$ and
$h^{-1}(y)$ belong to $A_i$, so
\[
   d(g^{-1}(y),h^{-1}(y))<\varepsilon.
\]
As $\varepsilon>0$ is arbitrary, it follows that \textup{(iv)} holds.

Finally, assume~\textup{(iv)} and let $A\sim_Q B$.
Proposition~\ref{prop:known-aem-structure} \textup{(iv)} gives
$h\in\mathcal H_Q$ with
$
   h(A)=B.
$
Since $A$ and $B$ are clopen, the set of homeomorphisms carrying
$A$ onto $B$ is an open neighborhood of $h$ in
$\Homeo(X)$.  By~\textup{(iv)}, this neighborhood meets
$[[\alpha]]$.  Hence some $g\in[[\alpha]]$ satisfies
$g(A)=B$, proving~\textup{(iii)}.
\end{proof}

Theorem~\ref{thm:exact-lifting} characterizes exactly when affine
evaluation loses no information about clopen equidecomposition.  We
now describe what happens when $\Inf(\Halpha)$ is nonzero.  For every
nonempty proper clopen set $A$, we show that the fibre of the affine
evaluation map through $A$ decomposes into $[[\alpha]]$-orbits
canonically indexed by $\Inf(\Halpha)$.

\begin{theorem}[Theorem~\ref{thm:main-infinitesimals} \textup{(ii)}]
\label{thm:profile-orbits}
Fix a clopen set
$\varnothing\neq A\subsetneq X$
and let
\[
   \mathscr F_A
   :=
   \{B\in\Clop(X):\widehat B=\widehat A\},
\]
the fibre of the affine evaluation map through $A$.  Then the map
\begin{equation}\label{eq:profile-orbit-bijection}
   \mathscr F_A/[[\alpha]]
   \to
   \Inf(\Halpha),
   \qquad
   [[\alpha]]\cdot B
   \mapsto
   [\one_B-\one_A]_H,
\end{equation}
is a bijection.  Here $\mathscr F_A/[[\alpha]]$ denotes the set of
$[[\alpha]]$-orbits in $\mathscr F_A$.
\end{theorem}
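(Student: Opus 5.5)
The plan is to build everything on the identification \eqref{eq:pair-equidecomposition}, which says $[\one_B]_H=[\one_{B'}]_H$ holds exactly when $B'=g(B)$ for some $g\in[[\alpha]]$. Surjectivity will come from the clopen-scale description in Lemma~\ref{lem:coinvariant-order-scale}. I would proceed in three steps: well-definedness, injectivity, and surjectivity.

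\emph{Well-definedness.} First I check that $[[\alpha]]$ acts on $\mathscr F_A$. Every $g\in[[\alpha]]$ lies in $\mathcal H_Q$, so $\widehat{g(B)}=\widehat B=\widehat A$ for $B\in\mathscr F_A$. Hence the orbit set $\mathscr F_A/[[\alpha]]$ makes sense. Next, if $B'=g(B)$ with $g\in[[\alpha]]$, then $[\one_{B'}]_H=[\one_B]_H$ by \eqref{eq:pair-equidecomposition}, so the value $[\one_B-\one_A]_H$ depends only on the orbit. Finally, for $B\in\mathscr F_A$ we have $\widehat{[\one_B-\one_A]_H}(\mu)=\mu(B)-\mu(A)=0$ for every $\mu\in Q$. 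Thus the value lies in $\Inf(\Halpha)$, and the map \eqref{eq:profile-orbit-bijection} is well defined.

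\emph{Injectivity.} Suppose $B,B'\in\mathscr F_A$ satisfy $[\one_B-\one_A]_H=[\one_{B'}-\one_A]_H$. Then $[\one_B]_H=[\one_{B'}]_H$. By \eqref{eq:pair-equidecomposition}, which rests on Theorem~\ref{thm:main-comparison}~(iii)--(iv), some $g\in[[\alpha]]$ satisfies $g(B)=B'$. So $B$ and $B'$ lie in the same orbit.

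\emph{Surjectivity.} I expect this to be the only substantive step, and it is where the hypothesis $\varnothing\neq A\subsetneq X$ is used. Fix $h\in\Inf(\Halpha)$ and put $k:=[\one_A]_H+h$. For every $\mu\in Q$ we have $\widehat k(\mu)=\mu(A)$. Since $A\neq\varnothing$ and every $\mu\in Q$ has full support (minimality), this value is strictly positive. By \eqref{eq:coinvariant-strict-cone}, it follows that $k\in\Halpha^+$. Similarly, $[\one_X]_H-k$ has evaluation $\mu(X\setminus A)>0$, because $A$ is proper. Hence $[\one_X]_H-k\in\Halpha^+$, that is, $0\leq k\leq[\one_X]_H$. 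Now \eqref{eq:coinvariant-relative-scale}, applied with $U=X$, yields a clopen $B$ with $[\one_B]_H=k$. This $B$ satisfies $\widehat B=\widehat k=\widehat A$, so $B\in\mathscr F_A$. Its image under the map is $[\one_B-\one_A]_H=h$, which proves surjectivity.
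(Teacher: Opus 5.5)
Your proposal is correct and follows essentially the same route as the paper. You obtain injectivity from \eqref{eq:pair-equidecomposition}. For surjectivity you set $[\one_A]_H+h$, use full support and properness of $A$ together with \eqref{eq:coinvariant-strict-cone} to place it in $[0,[\one_X]_H]$, and then apply the clopen scale \eqref{eq:coinvariant-relative-scale} with $U=X$.
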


\begin{proof}
Since every element of $[[\alpha]]$ preserves every measure in
$Q$, the group $[[\alpha]]$ acts on $\mathscr F_A$.  For
$B\in\mathscr F_A$,
\[
   [\one_B-\one_A]_H\in\Inf(\Halpha),
\]
and this class is constant on each $[[\alpha]]$-orbit.  Thus the map
in \eqref{eq:profile-orbit-bijection} is well defined.

Suppose $B,C\in\mathscr F_A$ have the same image, i.e.
$
   [\one_B-\one_A]_H
   =
   [\one_C-\one_A]_H,
$
and hence
$
   [\one_B]_H=[\one_C]_H.
$
By \eqref{eq:pair-equidecomposition}, there is
$g\in[[\alpha]]$ such that $g(B)=C$.  The map is therefore
injective.

For surjectivity, let $t\in\Inf(\Halpha)$ and put
\[
   \eta:=[\one_A]_H+t.
\]
Since $A$ is nonempty and proper and every measure in $Q$ has
full support,
\[
   \widehat\eta(\mu)=\mu(A)>0,
   \qquad
   \widehat{[\one_X]_H-\eta\;}(\mu)=1-\mu(A)>0
   \qquad(\mu\in Q).
\]
Lemma~\ref{lem:coinvariant-order-scale} therefore gives
\[
   0\leq\eta\leq[\one_X]_H.
\]
By \eqref{eq:coinvariant-relative-scale}, applied with $U=X$, there
is a clopen set $B\subseteq X$ such that
\[
   [\one_B]_H=\eta.
\]
Since $t$ is infinitesimal,
$
   \widehat B=\widehat A,
$
so $B\in\mathscr F_A$, and
\[
   [\one_B-\one_A]_H=t.
\]
Thus the map is surjective.
\end{proof}

\begin{remark}
Proposition~\ref{prop:known-aem-structure} \textup{(iv)} shows that
$\mathscr F_A$ is a single $\mathcal H_Q$-orbit.  Theorem~
\ref{thm:profile-orbits} describes exactly how this orbit splits into
$[[\alpha]]$-orbits: the orbit set is naturally parametrized by
$\Inf(\Halpha)$.  In particular, every nontrivial fibre of the
affine evaluation map contains exactly
$
   |\Inf(\Halpha)|
$ 
$[[\alpha]]$-orbits.

The assumption $\varnothing\neq A\subsetneq X$ is essential for
this statement.  Indeed, full support implies that the fibre through
$\varnothing$ consists only of $\varnothing$, while the fibre
through $X$ consists only of $X$.
\end{remark}

\section{Almost finiteness beyond Cantor spaces}
\label{sec:almost-finite}
Throughout this section, $\Gamma$ is a countably infinite discrete
amenable group, and all spaces are nonempty and compact metrizable.

We now derive the almost-finiteness consequences of
Theorem~\ref{thm:main-comparison} \textup{(i)}.
For a free zero-dimensional action, dynamical comparison together
with the small boundary property implies almost finiteness by
\cite[Theorem~6.1]{KerrSzabo}. Thus every free zero-dimensional
action of $\Gamma$ is almost finite.
Kerr and Szab\'o's reduction theorem
\cite[Theorem~7.6]{KerrSzabo} then yields almost finiteness for every
free action of $\Gamma$ with the topological small boundary property,
and their Corollary~7.7 gives the finite-dimensional case.
This proves Theorem~\ref{thm:main-almost-finite} and settles
Naryshkin's almost-finiteness conjecture.
Finally, for minimal actions, we
combine almost finiteness with the relevant $C^*$-algebraic results
to prove Theorem~\ref{thm:main-crossed-products}.

\subsection{The zero-dimensional case}
\label{subsec:cantor-almost-finite}
\begin{corollary}\label{cor:cantor-almost-finite}
Every free action $\alpha:\Gamma\acts Z$ on a nonempty compact
metrizable zero-dimensional space is almost finite. If the action
is also minimal, $C(Z)\rtimes_r\Gamma$ is $\mathcal Z$-stable.
\end{corollary}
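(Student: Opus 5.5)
The plan is to obtain almost finiteness from the Kerr--Szab\'o characterization in Theorem~\ref{thm:known-almost-finite}\textup{(i)}, and then to get $\mathcal Z$-stability from Theorem~\ref{thm:known-almost-finite}\textup{(ii)}. Fix a free action $\alpha:\Gamma\acts Z$ on a nonempty compact metrizable zero-dimensional space.

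First I check the small boundary property. Since $Z$ has a basis of clopen sets, every point has arbitrarily small clopen neighbourhoods $U$. For these, $\partial U=\varnothing$, so $\mu(\partial U)=0$ for all $\mu\in\cM_\Gamma(Z)$. This is the remark recorded after the definition of SBP.

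Second, I verify open-set comparison. By Theorem~\ref{thm:main-comparison}\textup{(i)}, which requires neither minimality nor freeness, $\alpha$ has clopen dynamical comparison. Since $Z$ is compact, metrizable and zero-dimensional, Lemma~\ref{lem:clopen-open-comparison} upgrades this to open-set comparison. The only point to watch is that the hypotheses match: $\Gamma$ is countably infinite, discrete and amenable, and $Z$ is nonempty, compact, Hausdorff and zero-dimensional. These are exactly the standing assumptions of Theorem~\ref{thm:main-comparison}.

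Third, freeness together with SBP and open-set comparison gives almost finiteness by Theorem~\ref{thm:known-almost-finite}\textup{(i)}. If the action is also minimal, then it is free, minimal and almost finite on a compact metrizable space, and Theorem~\ref{thm:known-almost-finite}\textup{(ii)} yields $C(Z)\rtimes_r\Gamma\otimes_{\min}\mathcal Z\cong C(Z)\rtimes_r\Gamma$.

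I do not expect a genuine obstacle, since all the depth lies in Theorem~\ref{thm:main-comparison}\textup{(i)}. The only delicate step is matching definitions: the comparison in Kerr--Szab\'o is the open-set version with $\prec$, and Lemma~\ref{lem:clopen-open-comparison} (Kerr's Proposition~3.6) is exactly what bridges it to the clopen version. One could instead argue via Theorem~\ref{thm:kerr} using clopen castles, but the route above is more direct.
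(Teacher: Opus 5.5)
Your proposal is correct and follows the paper's proof essentially verbatim. Like the paper, you obtain clopen comparison from Theorem~\ref{thm:main-comparison}\textup{(i)}, upgrade it to open-set comparison via Lemma~\ref{lem:clopen-open-comparison}, and get SBP from the clopen basis. You then conclude with Theorem~\ref{thm:known-almost-finite}\textup{(i)}, and with \textup{(ii)} in the minimal case.
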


\begin{proof}
By Theorem~\ref{thm:main-comparison} \textup{(i)}, the action has
clopen dynamical comparison, hence open-set comparison by
Lemma~\ref{lem:clopen-open-comparison}. A clopen basis gives SBP.
Theorem~\ref{thm:known-almost-finite} \textup{(i)} therefore
gives almost finiteness. If the action is minimal, apply
Theorem~\ref{thm:known-almost-finite} \textup{(ii)} for the
crossed-product conclusion.
\end{proof}

\subsection{The topological small boundary property and Naryshkin's conjecture}
\label{subsec:proof-main-almost-finite}

The zero-dimensional case established above provides exactly the
hypothesis needed for the reduction theorem of Kerr and Szab\'o.
We now apply their result to pass to actions with the TSBP, and then to finite-dimensional spaces.
This completes the proof of Theorem~\ref{thm:main-almost-finite}.

\begin{proof}[Proof of Theorem~\ref{thm:main-almost-finite}]
By Corollary~\ref{cor:cantor-almost-finite}, every
free action of $\Gamma$ on a nonempty compact metrizable
zero-dimensional space is almost finite. Theorem~\ref{thm:ks-reduction}, therefore implies that every
free action of $\Gamma$ on a nonempty compact metrizable space with
TSBP is almost finite. The finite-dimensional assertion follows from
Proposition~\ref{prop:external-finite-dim} to obtain TSBP.
\end{proof}

\subsection{Regularity of the crossed products}
\label{subsec:crossed-product-regularity}

\begin{proof}[Proof of Theorem~\ref{thm:main-crossed-products}]
By Theorem~\ref{thm:main-almost-finite}, the action is almost finite.
Since it is also free and minimal,
Theorem~\ref{thm:known-almost-finite}\,\textup{(ii)} gives
$\mathcal Z$-stability of $A=C(X)\rtimes_r\Gamma$.

The crossed product $A$ is separable, simple, unital, and nuclear
by the standard properties of crossed products of free minimal
amenable actions; see \cite{BrownOzawa}.
Moreover, $X$ is infinite because $\Gamma$ is infinite and acts
freely, so the inclusion $C(X)\subseteq A$ shows that $A$ is
infinite-dimensional.
By \cite[Theorem~B]{CastillejosEvingtonTikuisisWhiteWinter},
\[
    \dim_{\mathrm{nuc}}(A)\leq 1.
\]

By \cite[Theorem~4.5]{RordamZStable}, $\mathcal Z$-stability implies
that $W(A)$ is almost unperforated. Since nuclearity implies
exactness, \cite[Corollary~4.6]{RordamZStable}, applied to matrix
algebras over $A$, also gives strict comparison of positive elements.
Thus all four regularity conditions hold.
The finite-dimensional case follows from
Proposition~\ref{prop:external-finite-dim}.
\end{proof}

\section*{Statements and Declarations}

\noindent\textbf{Acknowledgements.} ChatGPT was used to assist with language editing, exposition, and clarifying the presentation of some proof ideas and explanations. All mathematical content is entirely the authors' own, and the authors take full responsibility for the final manuscript.

\medskip

\noindent\textbf{Funding.}
This work was supported by the Liaoning Provincial Natural Science
Foundation Doctoral Research Start-up Project (Project No.~2026-BS-0032),
the Postdoctoral Fellowship Program and China Postdoctoral Science
Foundation under Grant Number BX20250067, and the China Postdoctoral
Science Foundation under Grant Number 2025M773074.

\medskip

\noindent\textbf{Competing Interests.}
The authors have no relevant financial or non-financial interests to disclose.


\begin{thebibliography}{99}

\bibitem{CastillejosEvingtonTikuisisWhiteWinter}
Jorge Castillejos, Samuel Evington, Aaron Tikuisis,
Stuart White, and Wilhelm Winter,
\emph{Nuclear dimension of simple $C^*$-algebras},
Invent. Math. \textbf{224} (2021), no.~1, 245--290.

\bibitem{AnantharamanDelaroche2002}
Claire Anantharaman-Delaroche,
\emph{Amenability and exactness for dynamical systems and their
$C^*$-algebras},
Trans. Amer. Math. Soc. \textbf{354} (2002), no.~10, 4153--4178.

\bibitem{AnantharamanDelarocheRenault}
Claire Anantharaman-Delaroche and Jean N. Renault,
\emph{Amenable groupoids},
Monographies de L'Enseignement Math{\'e}matique, vol.~36,
L'Enseignement Math{\'e}matique, Geneva, 2000.

\bibitem{BoldriniPrasad}
Paolo Boldrini and Akshara Prasad,
\emph{Topologically free minimal actions without dynamical comparison},
preprint, arXiv:2607.01896, 2026.

\bibitem{BrownOzawa}
Nathanial P. Brown and Narutaka Ozawa,
\emph{$C^*$-algebras and finite-dimensional approximations},
Graduate Studies in Mathematics, vol.~88,
American Mathematical Society, Providence, RI, 2008.

\bibitem{ConleyJacksonKerrMarksSewardTuckerDrob}
Clinton T. Conley, Steve C. Jackson, David Kerr, Andrew S. Marks,
Brandon Seward, and Robin D. Tucker-Drob,
\emph{F{\o}lner tilings for actions of amenable groups},
Math. Ann. \textbf{371} (2018), no.~1--2, 663--683.

\bibitem{DownarowiczZhang}
Tomasz Downarowicz and Guohua Zhang,
\emph{Symbolic extensions of amenable group actions and the comparison
property},
Mem. Amer. Math. Soc. \textbf{281} (2023), no.~1390, vi+95 pp.

\bibitem{ElekLippner}
G{\'a}bor Elek and G{\'a}bor Lippner,
\emph{Borel oracles. An analytical approach to constant-time algorithms},
Proc. Amer. Math. Soc. \textbf{138} (2010), no.~8, 2939--2947.

\bibitem{ElliottNiu}
George A. Elliott and Zhuang Niu,
\emph{The $C^*$-algebra of a minimal homeomorphism of zero mean
dimension},
Duke Math. J. \textbf{166} (2017), no.~18, 3569--3594.

\bibitem{GiordanoMatuiPutnamSkau2010}
Thierry Giordano, Hiroki Matui, Ian F. Putnam, and Christian F. Skau,
\emph{Orbit equivalence for Cantor minimal $\mathbb Z^d$-systems},
Invent. Math. \textbf{179} (2010), no.~1, 119--158.

\bibitem{GiordanoPutnamSkau}
Thierry Giordano, Ian F. Putnam, and Christian F. Skau,
\emph{Topological orbit equivalence and $C^*$-crossed products},
J. Reine Angew. Math. \textbf{469} (1995), 51--111.

\bibitem{Glasner}
Eli Glasner,
\emph{Affine evaluation maps, dimension groups, and fair measures},
preprint, arXiv:2608.18700v1, 2026.

\bibitem{GlasnerWeiss}
Eli Glasner and Benjamin Weiss,
\emph{Weak orbit equivalence of Cantor minimal systems},
Internat. J. Math. \textbf{6} (1995), no.~4, 559--579.

\bibitem{HermanPutnamSkau}
Richard H. Herman, Ian F. Putnam, and Christian F. Skau,
\emph{Ordered Bratteli diagrams, dimension groups and topological dynamics},
Internat. J. Math. \textbf{3} (1992), no.~6, 827--864.

\bibitem{IbarluciaMelleray}
Tom{\'a}s Ibarluc{\'i}a and Julien Melleray,
\emph{Dynamical simplices and minimal homeomorphisms},
Proc. Amer. Math. Soc. \textbf{145} (2017), no.~11, 4981--4994.

\bibitem{Kerr}
David Kerr,
\emph{Dimension, comparison, and almost finiteness},
J. Eur. Math. Soc. (JEMS) \textbf{22} (2020), no.~11, 3697--3745.

\bibitem{KerrNaryshkin}
David Kerr and Petr Naryshkin,
\emph{Elementary amenability and almost finiteness},
Compos. Math. \textbf{161} (2025), no.~12, 3321--3337.

\bibitem{KerrSzabo}
David Kerr and G{\'a}bor Szab{\'o},
\emph{Almost finiteness and the small boundary property},
Comm. Math. Phys. \textbf{374} (2020), no.~1, 1--31.

\bibitem{Koppelberg}
Sabine Koppelberg,
\emph{General theory of Boolean algebras},
in \emph{Handbook of Boolean Algebras}, vol.~1,
J. Donald Monk and Robert Bonnet (eds.),
North-Holland, Amsterdam, 1989.

\bibitem{Lindenstrauss1999}
Elon Lindenstrauss,
\emph{Mean dimension, small entropy factors and an embedding theorem},
Publ. Math. Inst. Hautes \'Etudes Sci. \textbf{89} (1999), 227--262.

\bibitem{Ma}
Xin Ma,
\emph{A generalized type semigroup and dynamical comparison},
Ergodic Theory Dynam. Systems \textbf{41} (2021), no.~7,
2148--2165.

\bibitem{Melleray}
Julien Melleray,
\emph{Clopen type semigroups of actions on $0$-dimensional compact spaces},
Groups Geom. Dyn. \textbf{19} (2025), no.~3, 957--987.

\bibitem{MellerayCorrections}
Julien Melleray,
\emph{Some minor corrections to the paper
``Clopen type semigroups of actions on $0$-dimensional compact spaces''},
author's correction note,
\url{https://math.univ-lyon1.fr/~melleray/clopen_comments.pdf}.

\bibitem{Naryshkin}
Petr Naryshkin,
\emph{Polynomial growth, comparison, and the small boundary property},
Adv. Math. \textbf{406} (2022), Paper No.~108519, 9 pp.

\bibitem{NaryshkinExtensions}
Petr Naryshkin,
\emph{Group extensions preserve almost finiteness},
J. Funct. Anal. \textbf{286} (2024), no.~7,
Paper No.~110348, 8 pp.

\bibitem{Renault1980}
Jean N. Renault,
\emph{A groupoid approach to $C^*$-algebras},
Lecture Notes in Mathematics, vol.~793,
Springer-Verlag, Berlin, 1980.

\bibitem{RenaultWilliams}
Jean N. Renault and Dana P. Williams,
\emph{Amenability of groupoids arising from partial semigroup actions
and topological higher rank graphs},
Trans. Amer. Math. Soc. \textbf{369} (2017), no.~4, 2255--2283.

\bibitem{RordamZStable}
Mikael R{\o}rdam,
\emph{The stable and the real rank of $\mathcal Z$-absorbing
$C^*$-algebras},
Internat. J. Math. \textbf{15} (2004), no.~10, 1065--1084.

\bibitem{RordamSierakowski}
Mikael R{\o}rdam and Adam Sierakowski,
\emph{Purely infinite $C^*$-algebras arising from crossed products},
Ergodic Theory Dynam. Systems \textbf{32} (2012), no.~1, 273--293.

\bibitem{SzaboRokhlin}
G{\'a}bor Szab{\'o},
\emph{The Rokhlin dimension of topological $\mathbb Z^m$-actions},
Proc. Lond. Math. Soc. (3) \textbf{110} (2015), no.~3, 673--694.

\bibitem{TomsWinter}
Andrew S. Toms and Wilhelm Winter,
\emph{Minimal dynamics and $K$-theoretic rigidity: Elliott's conjecture},
Geom. Funct. Anal. \textbf{23} (2013), no.~1, 467--481.

\bibitem{WinterZachariasNuclearDimension}
Wilhelm Winter and Joachim Zacharias,
\emph{The nuclear dimension of $C^*$-algebras},
Adv. Math. \textbf{224} (2010), no.~2, 461--498.

\end{thebibliography}
\end{document}